\newcounter{noindnum}[subsection]
\renewcommand{\thenoindnum}{\roman{noindnum}}
\newcommand{\noindstep}{\refstepcounter{noindnum}{{\rm(}\thenoindnum}\/{\rm)} }
\newcommand{\stepzero}{\setcounter{noindnum}{0}}
\renewcommand{\phi}{\varphi}
\renewcommand{\epsilon}{\varepsilon}
\renewcommand{\emptyset}{\varnothing}
\newcommand{\bE}{\mathbf E}
\newcommand{\bG}{\mathbf G}
\newcommand{\bH}{\mathbf H}
\newcommand{\bP}{\mathbf P}
\newcommand{\bU}{\mathbf U}
\newcommand{\cA}{\mathcal A}
\newcommand{\cE}{\mathcal E}
\newcommand{\cF}{\mathcal F}
\newcommand{\cG}{\mathcal G}
\newcommand{\cH}{\mathcal H}
\newcommand{\cO}{\mathcal O}
\newcommand{\cP}{\mathcal P}
\newcommand{\cT}{\mathcal T}
\newcommand{\cX}{\mathcal X}
\newcommand{\C}{\mathbb C}
\newcommand{\R}{\mathbb R}
\renewcommand{\P}{\mathbb P}
\newcommand{\A}{\mathbb A}
\newcommand{\Aff}{\mathrm{Aff}}
\newcommand{\Et}{\mathrm{\acute Et}}
\DeclareMathOperator{\Id}{Id}
\DeclareMathOperator{\Inn}{\mathbf{Inn}}
\DeclareMathOperator{\Out}{\mathbf{Out}}
\DeclareMathOperator{\Aut}{\mathbf{Aut}}
\DeclareMathOperator{\Orb}{Orb}
\DeclareMathOperator{\Iso}{\mathbf{Iso}}
\DeclareMathOperator{\ISO}{Iso}
\DeclareMathOperator{\spec}{Spec}
\DeclareMathOperator{\Hom}{Hom}
\newcommand{\Gm}{\mathop{\mathbb G_m}}
\newcommand{\fk}{\mathfrak k}
\newcommand{\fm}{\mathfrak m}
\theoremstyle{plain}
\newtheorem{theorem}{Theorem}
\newtheorem{proposition}{Proposition}[section]
\newtheorem{lemma}[proposition]{Lemma}
\newtheorem*{corollary*}{Corollary}
\newtheorem{corollary}[proposition]{Corollary}
\theoremstyle{definition}
\newtheorem{definition}[proposition]{Definition}
\theoremstyle{remark}
\newtheorem{remark}[proposition]{Remark}
\newtheorem{remarks}[proposition]{Remarks}
\newtheorem*{remark*}{Remark}
\newtheorem{example}[proposition]{Example}
\newtheorem{question}{Question}
\begin{document}

\title[Principal bundles over $\A^1$]{Affine Grassmannians of group schemes and exotic principal bundles over $\A^1$}

\keywords{Simple group schemes; Principal bundles; Affine Grassmannians}

\thanks{This article appeared in the American Journal of Mathematics, Volume 138, Issue 4, 2016, pages 879-906, Copyright \copyright 2016, Johns Hopkins University Press.}

\begin{abstract}
Let $\bG$ be a simple simply-connected group scheme over a regular local scheme $U$. Let $\cE$ be a principal $\bG$-bundle over $\A^1_U$ trivial away from a subscheme finite over $U$. We show that $\cE$ is not necessarily trivial and give some criteria of triviality. To this end, we define affine Grassmannians for group schemes and study their Bruhat decompositions for semi-simple group schemes. We also give examples of principal $\bG$-bundles over $\A^1_U$ with split $\bG$ such that the bundles are not isomorphic to pullbacks from $U$.
\end{abstract}

\author{Roman Fedorov}
\email{rmfedorov@gmail.com}
\address{Mathematics Department, 138 Cardwell Hall, Kansas State University, Manhattan, KS 66506, USA}

\maketitle

\section{Introduction}
In 1976 Daniel Quillen and Andrei Suslin independently proved a conjecture of Serre that an algebraic vector bundle over an affine space is algebraically trivial (see~\cite{QuillenOnSerre,SuslinOnSerre}). A few years earlier, Hyman Bass (see~\cite[Problem~IX]{BassKTheory}) asked a more general question (see also~\cite{QuillenOnSerre}): Let $R$ be a regular ring, is every vector bundle over $\A^1_R:=\spec R[t]$ isomorphic to the pullback of a vector bundle over $\spec R$? This is now known as Bass--Quillen problem. Note that it is enough to consider the case when $R$ is a regular local ring (see~\cite[Thm.~1]{QuillenOnSerre}). The problem was solved by Hartmut Lindel in~\cite{LindelOnBassQuillen} in the geometric case, that is, when $R$ is a localization of a $k$-algebra of finite type, where $k$ is a field.

We can ask a more general question: consider a regular local $k$-algebra $R$ and let $\bG$ be a simple simply-connected group scheme over $R$.
\begin{question}\label{question2}
Let $\cE$ be a principal $\bG$-bundle over the affine line $\A^1_R$. Is $\cE$ isomorphic to the pullback of a principal $\bG$-bundle over $\spec R$?
\end{question}

Using the standard relation between principal $SL(n,R)$-bundles and rank $n$ vector bundles, we see that the above question reduces to the Bass--Quillen problem (=Lindel's Theorem if $R$ is of essentially finite type), when $\bG$ is the special linear group.

Note that the answer to Question~\ref{question2} is positive if $R$ is a perfect field by a theorem of Raghunathan and Ramanathan (see~\cite{RagunathanRamanthan} and~\cite{GilleTorseurs}). We will see that the answer is in general negative even if we assume that $\bG$ is a split group, and $k$ is the field of complex numbers (see Theorem~\ref{th:exot} and Example~\ref{example}, where $\bG=Spin(7,\C)$). To the best of my knowledge such examples were not known before (while examples with algebraically non-closed $k$, e.g.~$k$ being the field of real numbers, were known before, see Remark~\ref{rem:CounterExamples}\eqref{rem:Parimala}).

The principal bundles over $\A^1_R$ we construct have the following property: they are isomorphic to the pullback of principal bundles over $\spec R$ on the complement of a subscheme in $\A^1_R$ finite over $\spec R$. In this case, replacing the group scheme $\bG$ by a strongly inner form and `twisting' the bundle we may assume that the bundle is trivial away from the subscheme (see the proof of Theorem~\ref{th:exot} for details). Thus we arrive at the following question.

\begin{question}\label{question}
Let $\cE$ be a principal $\bG$-bundle over the affine line $\A^1_R$. Assume that~$\cE$ is trivial on the complement of a subscheme finite over~$\spec R$. Does this imply that $\cE$ is trivial?
\end{question}
A positive answer to this question was obtained in~\cite[Thm.~1.3]{PaninStavrovaVavilov} in the case, when the group scheme $\bG$ is isotropic. Using the technique of nice triples (similar to standard triples of Voevodsky), Panin, Stavrova, and Vavilov derived from this statement \emph{the conjecture of Grothendieck and Serre on principal bundles\/} for isotropic group schemes (see Section~\ref{subsect:GrSerre} for more details).

There was a certain hope that Question~\ref{question} would have a positive answer without the isotropy condition, implying that the conjecture of Grothendieck and Serre holds without the isotropy condition. However, using the technique of affine Grassmannians, we will show that the answer is in general \emph{negative}. A counterexample is given in this paper (see Theorem~\ref{th:exot} and Example~\ref{example}). We give some criteria for principal $\bG$-bundles as in Question~\ref{question} to be trivial, see Theorem~\ref{Th:A} below.

Note that a statement slightly weaker, than the positive answer to Question~\ref{question}, was proved by Panin and the author, see~\cite[Thm.~3]{FedorovPanin}. This statement was still sufficient to prove the conjecture of Grothendieck and Serre for regular local rings containing infinite fields. The idea is that, instead of studying the triviality of bundles over $\A^1_R$ one should study the triviality over $\P^1_R-Y$, where $Y$ is finite and \'etale over $\spec R$. This is also discussed in Section~\ref{sect:generalization} below (cf.~Theorem~\ref{Th:B}).

\subsection{Affine Grassmannians}
The proofs of the announced results are based on the technique of affine Grassmannians. Let $T=\spec A$ be an affine scheme. We define ``the formal disc'' $D_T$ over $T$ as $\spec A[[t]]$, where $A[[t]]$ is the ring of formal power series with coefficients in $A$. Similarly, let $A((t))=A[[t]][t^{-1}]$ denote the ring of formal Laurent series. Let $\dot D_T:=\spec A((t))$ be the ``punctured formal disc over $T$''.

Next, let $U$ be any connected affine scheme. Let $\Aff/U$ be the (big) \'etale site of affine schemes over $U$. Recall that a $U$-space is a sheaf on $\Aff/U$.

Let $\bG$ be a smooth affine $U$-group scheme. In Section~\ref{sect:Grassm} we define the affine Grassmannian of $\bG$ as the sheafification of the presheaf $T\mapsto\bG(\dot D_T)/\bG(D_T)$ (here~$T$ is an affine $U$-scheme). We show that this affine Grassmannian parameterizes principal $\bG$-bundles over $\P^1_U$ with trivialization on $\A^1_U$.

Then we develop a Bruhat decomposition of affine Grassmannians in the case when $\bG$ is semi-simple and $U$ is a scheme over a field. While these results are not very surprising, to the best of our knowledge this was not done before for non-split group schemes.

\subsection{Acknowledgments} The author obtained main results underlying this paper, while working with I.~Panin on~\cite{FedorovPanin}. The idea of the current paper emerged in a conversation with P.~Gille and J.-L. Colliot--Th\'el\`ene, while the author visited \'Ecole Normale Sup\'erieure. The author is also thankful to A.~Stavrova and B.~Antieau for their interest in this work. The author is also grateful to D.~Arinkin, J.~Humphreys, C.~Sorger, and K.~Zaynullin for clarifying certain points. The author talked about these results at the conference on Torsors, Nonassociative Algebras and Cohomological Invariants at Fields Institute. He wants to thank both the Institute and the organizers.

While working on a revision of this paper, the author was a member of Max Planck Institute for Mathematics in Bonn, he wants to thank the institute for the perfect working atmosphere. Also, the author was partially supported by the NSF grant DMS-1406532. Finally, he wants to thank the anonymous referee for useful comments.

\section{Main results}
\subsection{Conventions}
Let $U$ be a scheme and $\bG$ be a group scheme over~$U$. We always assume that $\bG$ is affine, flat, and of finite presentation over $U$. Recall that a $U$-scheme $\cG$ with a left action of $\bG$ is called \emph{a principal $\bG$-bundle over $U$}, if $\cG$ is faithfully flat and quasi-compact over $U$ and the action is simply transitive, that is, the obvious morphism $\bG\times_U\cG\to\cG\times_U\cG$ is an isomorphism, see~\cite[Sect.~6]{FGA1}. If~$\bG$ is smooth over $U$, then it is well known that every principal $\bG$-bundle is trivial locally in \'etale topology (but in general not in Zariski topology).

Note that we can similarly define right $\bG$-bundles as schemes with right action of $\bG$ satisfying the same condition. In fact, every left $\bG$-bundle can be viewed as a right $\bG$-bundle by composing the action with the group inversion, and vice versa. If $T$ is a $U$-scheme, we will use the term ``principal $\bG$-bundle over $T$'' to mean a principal $\bG\times_UT$-bundle over $T$. We often skip the adjective ``principal''. Finally, we often consider the following situation: $\phi:T'\to T$ is a morphism, $\cE$ is a $\bG$-bundle over $\P_T^1$. We denote $(\phi\times\Id_{\P^1})^*\cE$ by $\phi^*\cE$ to simplify notation.

By a $k$-group, where $k$ is a field, we mean a smooth $k$-group scheme (of finite type over $k$).

We define simple, semi-simple, and simply-connected group schemes as in~\cite{SGA3-3} (see~[Exp.~XIX, Def.~2.7], [Exp.~XXI, Def.~6.2.6], and [Exp.~XXIV, Sect 5.3] \emph{ibid}.). In particular, a $T$-group scheme $\bG$ is \emph{semi-simple}, if it is affine, of finite type, smooth over $T$, and the geometric fibers are connected semi-simple groups.

Recall that a simple $T$-group scheme is called \emph{isotropic}, if it contains a torus isomorphic to~$\mathbb G_{m,T}$. Note that if $T$ is semi-local, then by~\cite[Exp.~XXVI, Cor.~6.14]{SGA3-3} this is equivalent to the condition that the restriction of $\bG$ to each connected component of $T$ contains a proper parabolic subgroup scheme. A group scheme that is not isotropic is called \emph{anisotropic}.

\subsection{Principal bundles over affine lines}\label{sect:PrincipalOverLines}
Let $R$ be a regular local $k$-algebra, where $k$ is an infinite field. Set $U=\spec R$, let $u\in U$ be the closed point. Let~$\bG$ be a simple simply-connected group scheme over $U$. Let $\bG_u$ be the fiber of $\bG$ over~$u$ so that $\bG_u$ is a simple $k$-group. Let $\cE$ be a principal $\bG$-bundle over $\A^1_U$ trivial on a~complement of a finite over $U$ subscheme $Z$. We ask whether $\cE$ is trivial. Amazingly, we can give a conjecturally complete answer to this question. Note that $Z$ is closed in $\P_U^1$; choose a trivialization of $\cE$ on $\A_U^1-Z$. We can extend~$\cE$ to $\P_U^1$ by gluing $\cE$ with the trivial $\bG$-bundle over $\P_U^1-Z$; denote the obtained bundle by $\hat\cE$.

\begin{theorem}\label{Th:A} Assume that $\cE$ is a principal $\bG$-bundle over $\A^1_U$ whose restriction to $\A^1_U-Z$ is trivial. Then

\stepzero\noindstep\label{thpr:LineA} If $\hat\cE_u:=\hat\cE|_{\P_u^1}$ is a trivial $\bG_u$-bundle, then $\hat\cE$ is trivial. (Thus, $\cE$ is also trivial.)

\noindstep\label{thpr:LineB} If $\bG$ is isotropic, then $\cE$ is trivial.

\noindstep\label{thpr:LineC} If $\bG$ is anisotropic at the generic point of $U$, and $\hat\cE_u$ is not a trivial $\bG_u$-bundle, then $\cE$ is not trivial.
\end{theorem}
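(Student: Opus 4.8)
The plan is to route all three parts through the affine Grassmannian $\mathrm{Gr}_{\bG}$ of Section~\ref{sect:Grassm} and its Bruhat decomposition. The unifying observation is that, whenever $\cE$ is trivial on $\A^1_U$, the extension $\hat\cE$ equipped with a trivialization over $\A^1_U$ is a $U$-point of $\mathrm{Gr}_{\bG}$ (recall $Z\subset\A^1_U$, so $\hat\cE$ is automatically trivial near the section $\infty$). The Bruhat decomposition then attaches to each fibre $\hat\cE_x$ a dominant coweight $\lambda_x$ — an invariant computed from the Cartan decomposition of $\bG$ over $\kappa(x)((1/t))$ — and $\hat\cE_x$ is trivial if and only if $\lambda_x=0$. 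The structural fact I would draw from the stratification is the closure relation $\overline{\mathrm{Gr}^{\lambda}}=\bigsqcup_{\mu\preceq\lambda}\mathrm{Gr}^{\mu}$: since $U$ is integral with generic point $\eta$ and closed point $u$, any $U$-point satisfies $\lambda_u\preceq\lambda_\eta$ in the dominance order (equivalently, one may use that $\mathrm{Gr}_{\bG}$ is ind-proper, hence separated, over $U$).

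For part~(\ref{thpr:LineC}) I argue by contradiction: suppose $\cE$ is trivial, so that $\hat\cE$ is a $U$-point of $\mathrm{Gr}_{\bG}$ as above, with generic fibre a $K$-point, $K=\mathrm{Frac}(R)=\kappa(\eta)$. The crucial input is that anisotropy of $\bG_\eta$ over $K$ forces $\mathrm{Gr}_{\bG}(K)$ to be a single (base) point. Indeed, the split rank of $\bG$ is unchanged under the extension $K\subset K((1/t))$: cocharacters are rigid, so any $K((1/t))$-split subtorus specializes to a split subtorus over the residue field $K$, whence $\bG$ remains anisotropic over the complete discretely valued field $K((1/t))$. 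By Bruhat--Tits theory an anisotropic semisimple group over such a field, equipped with its reductive integral model, is bounded, i.e. $\bG(K((1/t)))=\bG(K[[1/t]])$, so its Cartan decomposition consists of the single stratum $\lambda=0$. Hence $\lambda_\eta=0$, i.e. $\hat\cE_\eta$ is trivial, and the closure relation gives $\lambda_u\preceq\lambda_\eta=0$, so $\lambda_u=0$ and $\hat\cE_u$ is trivial. This contradicts the hypothesis, so $\cE$ is not trivial.

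Part~(\ref{thpr:LineB}), the isotropic case, is exactly the positive answer to Question~\ref{question} established in \cite[Th.~1.3]{PaninStavrovaVavilov}; I would simply invoke it. Its mechanism is complementary to the above: isotropy supplies proper parabolic subgroup schemes and hence unipotent homotopies that trivialize bundles on $\A^1_U$ — precisely the tool that the anisotropic case of part~(\ref{thpr:LineC}) lacks.

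Part~(\ref{thpr:LineA}) is the genuinely hard direction and is where I expect the main obstacle. Now the hypothesis only yields triviality on the closed fibre, $\lambda_u=0$, and the closure relation runs the wrong way: $\lambda_u\preceq\lambda_\eta$ gives no control on $\lambda_\eta$. Moreover $\hat\cE$ is a priori trivialized only away from $Z$, so it is not yet a $U$-point of $\mathrm{Gr}_{\bG}$; restricting to $\A^1_{\kappa(u)}$ one learns that $\cE_u$ is trivial, and the content of the statement is to lift this triviality from the closed fibre to all of $\A^1_U$. This is the delicate point already addressed, in slightly weaker form, in \cite[Th.~3]{FedorovPanin}, and I would follow that route: combine ind-properness of the Schubert varieties with a deformation/approximation argument over the local ring $R$, using the $\Gm$-action that rescales $\A^1_U$ and for which the base point is the attracting fixed point, to propagate the vanishing of the coweight from $u$ across $U$. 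Turning this lifting into a self-contained argument, rather than citing \cite{FedorovPanin}, is the principal difficulty.
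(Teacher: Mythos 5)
The genuine gap is in part~\eqref{thpr:LineA}, which you have left unproved and, moreover, misdiagnosed. It is not the hard direction, and the affine Grassmannian is the wrong tool for it: the paper disposes of it by citing the openness of the trivial bundle in the moduli of $\bG_u$-bundles over $\P^1_u$ (\cite[Prop.~5.1]{FedorovPanin}, or \cite[Th.~9.6]{PaninStavrovaVavilov}). The mechanism is deformation-theoretic on $\P^1$, not on $U$: the adjoint bundle of the trivial $\bG$-bundle on $\P^1$ has vanishing $H^1$, so the trivial bundle is a rigid, open point of the moduli stack of bundles on $\P^1$; hence the locus of $x\in U$ with $\hat\cE_x$ trivial is open, contains the closed point $u$, and therefore (as $U$ is local) is all of $U$, and the cited proposition upgrades this to triviality of $\hat\cE$ itself, hence of $\cE$. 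Your reference to \cite[Th.~3]{FedorovPanin} points at the wrong statement (that is the isotropic modification result behind part~\eqref{thpr:LineB}), and the $\Gm$-contraction sketch is not carried out; as written, part~\eqref{thpr:LineA} has no proof.

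Parts~\eqref{thpr:LineB} and~\eqref{thpr:LineC} are essentially in order. For~\eqref{thpr:LineB} you cite the same source as the paper. For~\eqref{thpr:LineC} your route genuinely differs: the paper sends a point of a nonzero stratum $Gr^{\hat\lambda}_\bG$ over the generic point $\eta$ of $U$ to a proper parabolic subgroup of $\bG_\eta$ via the morphism $Gr^{\hat\lambda}_\bG\to F^{\hat\lambda}_\bG$ of Proposition~\ref{Pr:TwistedBB}, contradicting anisotropy directly over $K$; you instead pass to $K((1/t))$ and invoke Bruhat--Tits boundedness of anisotropic groups. This works, but two steps need more care than you give them: (a) anisotropy must first be transported from $K$ to $K((1/t))$ --- your ``rigidity of cocharacters'' is best made precise by properness of the scheme of parabolic subgroups, so that a $K((1/t))$-point extends to $K[[1/t]]$ and specializes to a $K$-point; and (b) the equality $\bG(K((1/t)))=\bG(K[[1/t]])$ controls only the presheaf quotient, not the full set $Gr_\bG(K)$, whose sheafification admits extra points coming from pairs whose underlying bundle restricts to a nontrivial torsor on the formal disc at infinity. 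This is harmless here only because $\hat\cE$ is by construction trivial on $\P^1_U-Z$ and $D_{\infty\times U}\to\P^1_U$ factors through $\P^1_U-Z$, so your $U$-point is honestly represented by an element of $\bG(R((1/t)))$. The final propagation from $\eta$ to $u$ (separatedness of the ind-proper $Gr_\bG$, or closedness of $Gr^0_\bG$) coincides with the paper's argument. Net: your~\eqref{thpr:LineC} is an acceptable alternative that trades the paper's parabolic-scheme morphism for Bruhat--Tits theory, but the theorem as a whole is not proved until~\eqref{thpr:LineA} is repaired.
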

\begin{proof}
The theorem follows from more general Theorem~\ref{Th:B} below.
\end{proof}
\begin{remarks}
\stepzero\noindstep The reader may ask what happens if $\bG$ is isotropic at the generic point of $U$ but anisotropic on $U$. This would contradict a conjecture, predicting that a principal bundle under a reductive group can be reduced to a parabolic subgroup over a regular local ring, if it can be reduced to this subgroup generically (cf.~the Grothendieck--Serre conjecture, Section~\ref{subsect:GrSerre}). To the best of my knowledge this conjecture belongs to Colliot--Th\'el\`ene. It follows from~\cite{OjangurenPanin2,PaninPetrovPurity}, that the conjecture holds for many simple groups schemes.

\noindstep Note that part~\eqref{thpr:LineA} of the theorem follows from the fact that the trivial bundle is open in the moduli of $\bG_u$-bundles over $\P_u^1$ (see~\cite[Prop.~5.1]{FedorovPanin}). Part~\eqref{thpr:LineB} follows from~\cite[Thm.~1.3]{PaninStavrovaVavilov}. Part~\eqref{thpr:LineC} is one of the main results of this paper; it is proved below using the technique of affine Grassmannians.

\noindstep If $\bG_u$ is anisotropic, then the bundle $\hat\cE_u$ is trivial by~\cite[Thm.~3.8]{GilleTorseurs}. Thus in this case only possibility~\eqref{thpr:LineA} can occur in the theorem.
\end{remarks}

The theorem has the following unexpected corollary.
\begin{corollary}
In the situation of the theorem assume that $\bG$ is anisotropic at the generic point of $U$. Then the following are equivalent

\stepzero\noindstep $\cE$ is trivial over $\A^1_U$;

\noindstep $\hat\cE$ is trivial over $\P^1_U$;

\noindstep $\hat\cE_u$ is trivial over $\P^1_u$.
\end{corollary}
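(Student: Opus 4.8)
The plan is to prove the corollary as a short cyclic chain of implications, each of which is a direct citation of one part of Theorem~\ref{Th:A} together with the refinement noted immediately after its statement. Since the three conditions concern triviality of $\cE$ over $\A^1_U$, of $\hat\cE$ over $\P^1_U$, and of $\hat\cE_u$ over $\P^1_u$ respectively, I would establish (i) $\Rightarrow$ (iii) $\Rightarrow$ (ii) $\Rightarrow$ (i). No genuinely new argument is needed: the entire content sits inside part~\eqref{thpr:LineC}, whose proof via affine Grassmannians is the heart of the paper, and the corollary is a purely formal reassembly.

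First I would dispose of the easy implication (ii) $\Rightarrow$ (i). By construction $\hat\cE$ is obtained by gluing $\cE$ over $\A^1_U$ with the trivial bundle over $\P^1_U-Z$ along the chosen trivialization on the overlap $\A^1_U-Z$, so that $\hat\cE|_{\A^1_U}=\cE$. Hence if $\hat\cE$ is trivial over $\P^1_U$, then its restriction $\cE$ is trivial over $\A^1_U$. This step uses neither the anisotropy hypothesis nor any part of the theorem.

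Next, (iii) $\Rightarrow$ (ii) is exactly part~\eqref{thpr:LineA} of Theorem~\ref{Th:A} in its refined form: if $\hat\cE_u$ is a trivial $\bG_u$-bundle over $\P^1_u$, then not only is $\cE$ trivial, but in fact $\hat\cE$ itself is trivial over $\P^1_U$, as remarked after the statement of the theorem. Again this implication holds without invoking anisotropy.

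Finally, (i) $\Rightarrow$ (iii) is the contrapositive of part~\eqref{thpr:LineC}, and this is the single place where generic anisotropy of $\bG$ enters: assuming $\bG$ is anisotropic at the generic point of $U$, part~\eqref{thpr:LineC} asserts that if $\hat\cE_u$ is not trivial then $\cE$ is not trivial, which is to say that triviality of $\cE$ forces triviality of $\hat\cE_u$. Combining the three steps closes the cycle and yields the claimed equivalence. The only point requiring care is bookkeeping rather than mathematics: one must confirm that the anisotropy assumption is used solely in the step (i) $\Rightarrow$ (iii), while the other two implications are unconditional, so that no hidden obstacle lurks beyond correctly matching each implication to the corresponding part of Theorem~\ref{Th:A}.
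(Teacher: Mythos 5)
Your proof is correct and matches what the paper intends: the corollary is stated without proof precisely because it is the formal cycle (i) $\Rightarrow$ (iii) $\Rightarrow$ (ii) $\Rightarrow$ (i) you describe, using the contrapositive of part~(iii) of the theorem, the refined form of part~(i) (that $\hat\cE$ itself is trivial), and the tautology $\hat\cE|_{\A^1_U}=\cE$. Your bookkeeping of where the anisotropy hypothesis enters is also accurate.
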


Another question is whether the situation of Theorem~\ref{Th:A}\eqref{thpr:LineC} is possible at all. The answer is given by the following extension theorem.

\begin{theorem}\label{ThMaina}
Let $U$ be any affine scheme; let $u\in U$ be a closed point; let $\bG$ and $\bG_u$ be as before. Assume that $Z\subset\A_U^1$ is finite and \'etale over $U$, the restriction of the group scheme $\bG_Z:=\bG\times_UZ$ to each connected component of $Z$ has a proper parabolic subgroup scheme, and the fiber $Z_u$ has a $k(u)$-rational point, where $k(u)$ is the residue field of $u$. Let $E$ be any $\bG_u$-bundle over $\P^1_u$ trivial at the generic point. Then there is a $\bG$-bundle $\cE$ over $\P^1_U$ such that $\cE|_{\P^1_u}\approx E$ and $\cE$ is trivial away from~$Z$.
\end{theorem}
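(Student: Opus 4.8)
The plan is to build $\cE$ by Beauville--Laszlo gluing along $Z$, so that the entire construction reduces to producing a single point of the affine Grassmannian over $U$ whose restriction to $u$ represents $E$. Write $Z_0$ for the connected component of $Z$ containing $z_0$; it suffices to glue along $Z_0$ and take $\cE$ trivial on $\P^1_U\setminus Z_0\supseteq\P^1_U\setminus Z$. First I would record that $\bG_u$ is isotropic: the proper parabolic subgroup scheme of $\bG_{Z_0}$ restricts, at the \emph{rational} point $z_0$, to a proper parabolic $P_{z_0}$ of $\bG_{z_0}=\bG_u$ (here $k(z_0)=k(u)$ because $z_0$ is rational).

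Next I would put $E$ in standard form over the residue field. Since $z_0$ is rational, $\A^1_{k(u)}:=\P^1_{k(u)}\setminus\{z_0\}$ is an affine line, and $E$ being trivial at the generic point means $E|_{\A^1_{k(u)}}$ is trivial away from a finite subscheme. Applying Theorem~\ref{Th:A}\eqref{thpr:LineB} with base $\spec k(u)$ (a regular local $k$-algebra) and the isotropic group $\bG_u$, we conclude that $E$ is trivial on $\P^1_{k(u)}\setminus\{z_0\}$, i.e.\ $E$ is concentrated at $z_0$. The Bruhat decomposition then presents $E\cong E_\nu$, glued from the trivial bundle by $\nu(t)$ at $z_0$, for a dominant coweight $\nu$; reducing $E$ to the given parabolic $P_{z_0}$, I would arrange $\nu$ to be valued in the maximal split central torus $S$ of the Levi of $P_{z_0}$.

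Then I would spread and localise the gluing datum. The torus $S$, its coweight lattice, and hence $\nu$ extend over the connected component $Z_0$, since $P_{z_0}$ is the fibre of the parabolic subgroup scheme of $\bG_{Z_0}$; this gives $\tilde\nu\colon\Gm\to\bG_{Z_0}$ with $\tilde\nu|_{z_0}=\nu$. The essential point is to isolate the datum at $z_0$ in the special fibre: the fibre $Z_0\times_U u=\coprod_i\spec k(z_i)$ (with $z_0$ the rational sheet) may contain further points $z_1,\dots,z_d$, and I need $g\in\bG(\dot D_{Z_0})$ with $[g]_{z_0}=[\nu(t)]$ but $[g]_{z_i}=[\mathrm{triv}]$ for $i\ge1$. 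Here I would exploit that $\bG$ is simply-connected, so the Schubert variety $\mathrm{Gr}^{\le\nu}$ is connected and contains both $[\tilde\nu(t)]$ and the base point; reducing to the relative rank-one subgroups attached to the coroots of $\nu$, one builds a $Z_0$-family $c\colon\A^1_{Z_0}\to\mathrm{Gr}^{\le\nu}$ with $c(1)=[\tilde\nu(t)]$ and $c(0)=[\mathrm{triv}]$. Choosing $f\in\cO(Z_0)$ reducing to $(1,0,\dots,0)$ on $Z_0\times_U u$ and setting $\gamma(z)=c(f(z))$ gives exactly $\gamma(z_0)=[\nu(t)]$ and $\gamma(z_i)=[\mathrm{triv}]$. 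This $\gamma$ is gluing data for a $\bG$-bundle $\cE$ on $\P^1_U$ that is trivial away from $Z_0\subseteq Z$ and satisfies $\cE|_{\P^1_u}\cong E_\nu\cong E$.

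The main obstacle is the localisation step: producing gluing data whose special fibre is concentrated at the single rational point $z_0$ while leaving the Galois-conjugate sheets $z_1,\dots,z_d$ trivial. This is precisely where simple-connectedness of $\bG$ (connectedness of $\mathrm{Gr}^{\le\nu}$, giving a curve joining $[\tilde\nu(t)]$ to the base point) and the rational point $z_0$ are indispensable; for $\Gm$ in place of $\bG$ the analogous isolation fails, since the affine Grassmannian of $\Gm$ is discrete and no such curve exists. A secondary difficulty, to be handled in the standard-form step, is guaranteeing that the coweight $\nu$ produced from $E$ can be realised inside the split torus $S$ that extends over $Z_0$, i.e.\ reconciling the type of $E$ with the parabolic supplied by the hypothesis.
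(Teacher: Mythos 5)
Your reduction of the problem to producing a point of $Gr_\bG(Z)$ restricting to the given point of $Gr_\bG(Z_u)$, and your first step (using the rational point $z_0$ and isotropy of $\bG_u$ to trivialize $E$ away from $z_0$ and read it off as a class in the affine Grassmannian at $z_0$), match the paper. But the core of your construction has a genuine gap, and it is not the ``secondary difficulty'' you relegate to the last paragraph --- it is the crux. You need the coweight $\nu$ classifying $E$ to lie in $X_*(S)$, where $S$ is the split central torus of the Levi of the parabolic supplied over $Z_0$, because only then does $\nu$ extend to a cocharacter $\tilde\nu$ over $Z_0$ and only then do your ``relative rank-one subgroups attached to the coroots of $\nu$'' exist over $Z_0$. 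This is false in general: $E$ is an \emph{arbitrary} generically trivial $\bG_u$-bundle, so $\nu$ ranges over all of $X_*(T_s)/W$ for a maximal split torus $T_s\subset\bG_u$, whereas $W\cdot X_*(S)$ is a union of sublattices of rank $\dim S$, which cannot exhaust $X_*(T_s)$ whenever $\dim S<\dim T_s$. The split rank of $\bG$ can genuinely jump at $z_0$ relative to the generic point of $Z_0$ (this is exactly the regime the theorem is aimed at, cf.\ Corollary~\ref{cor:example}, where $\bG$ is anisotropic generically on $U$), so there are admissible $E$ for which your gluing datum $[\tilde\nu(t)]$ does not exist over $Z_0$ at all.

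The paper's proof of Proposition~\ref{pr:GrSurj} avoids cocharacters entirely. It fixes opposite parabolic subgroup schemes $\bP^\pm\subset\bG_Z$ with unipotent radicals $\bU^\pm$ and works with the subgroup $\bE(-)$ generated by $\bU^+(-)$ and $\bU^-(-)$. Two facts replace your two steps: (a) every class in $Gr_\bG(Z_u)$ is represented by an element of $\bE(\dot D_{Z_u})$ --- this is \cite[Lemma~6.2]{FedorovPanin}, a Gauss-decomposition statement over the residue field, and it is precisely the ``reconciliation'' of $E$ with the given parabolic that your sketch leaves open; and (b) $\bE(\dot D_Z)\to\bE(\dot D_{Z_u})$ is surjective because $\bU^\pm$ are vector bundles over $Z$, so their sections lift along the closed embedding $\dot D_{Z_u}\hookrightarrow\dot D_Z$ of affine schemes (Lemma~\ref{lm:surjectivity}); this lifting is the rigorous form of your interpolation by a function $f$ equal to $1$ at $z_0$ and $0$ on the other sheets. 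If you replace your cocharacter-extension and Schubert-curve steps by (a) and (b), your argument becomes the paper's.
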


This is the second main result of the paper. One can give a proof very similar to that of~\cite[Thm.~3]{FedorovPanin}. We will give a proof using affine Grassmannians in Section~\ref{ProofThMaina}.

\begin{corollary}\label{cor:example}
Assume that $U$ is the spectrum of a regular local $k$-algebra, where~$k$ is an infinite field. Let $\bG$ be a simple simply-connected $U$-group scheme such that~$\bG$ is anisotropic at the generic point of $U$ but isotropic at the closed point $u$ of $U$. Then there exists a non-trivial $\bG$-bundle $\cE$ over $\A^1_U$ such that $\cE$ is trivial on $\A^1_U-Z$ for a certain $Z$ finite and \'etale over $U$.
\end{corollary}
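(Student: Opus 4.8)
The plan is to derive Corollary~\ref{cor:example} by combining the nonexistence part of Theorem~\ref{Th:A}\eqref{thpr:LineC} with the existence part supplied by Theorem~\ref{ThMaina}. The two results fit together precisely: part~\eqref{thpr:LineC} tells us that an anisotropic-at-the-generic-point bundle trivial away from $Z$ with nontrivial $\hat\cE_u$ is automatically \emph{non}-trivial, so to produce a nontrivial example it suffices to \emph{construct} such a bundle whose fiber over $u$ is nontrivial, and that construction is exactly what Theorem~\ref{ThMaina} provides. So the argument is essentially an assembly, and the main work is verifying that the hypotheses of both theorems can be met simultaneously for the given $U$ and $\bG$.

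First I would fix the input data for Theorem~\ref{ThMaina}. The isotropy of $\bG$ at the closed point $u$ means $\bG_u$ is isotropic over $k(u)$, hence (by the semi-local criterion recalled in the Conventions) $\bG_u$ contains a proper parabolic subgroup over $k(u)$. Thus $\bG_u$-bundles over $\P^1_u$ need not be trivial: by the Harder/Grothendieck-style reduction, a nontrivial reduction to a parabolic along a point of $\P^1_u$ gives a $\bG_u$-bundle $E$ over $\P^1_u$ that is nontrivial but still trivial at the generic point. I would choose $E$ to be such a bundle, so that in particular $E$ is nontrivial.

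Next I would arrange the subscheme $Z\subset\A^1_U$. Since $U$ is the spectrum of a local ring, I can take $Z$ to be a section of $\A^1_U\to U$, for instance the zero section, which is automatically finite and \'etale over $U$ and has a $k(u)$-rational fiber $Z_u$. Because $Z$ is a single copy of $U$, the restriction $\bG_Z\cong\bG$ is isotropic at $u$; the hypothesis of Theorem~\ref{ThMaina} that $\bG_Z$ have a proper parabolic on each connected component is where I must be slightly careful, since isotropy at $u$ alone need not force isotropy over all of $Z\cong U$. The cleanest remedy is to invoke the semi-local parabolic criterion on $Z$ directly, or else choose $Z$ to concentrate at $u$; I expect this compatibility check to be the only genuinely delicate point of the proof. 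Granting it, Theorem~\ref{ThMaina} yields a $\bG$-bundle $\hat\cE$ over $\P^1_U$ with $\hat\cE|_{\P^1_u}\approx E$ and trivial away from $Z$.

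Finally I would restrict $\hat\cE$ to $\A^1_U$ to obtain $\cE$, which is trivial on $\A^1_U-Z$ by construction. Since $\bG$ is anisotropic at the generic point of $U$ and $\hat\cE_u\approx E$ is nontrivial, Theorem~\ref{Th:A}\eqref{thpr:LineC} applies verbatim and forces $\cE$ to be non-trivial over $\A^1_U$. This exhibits the desired nontrivial $\bG$-bundle trivial off a finite \'etale $Z$, completing the proof. The whole argument is a two-line combination once the data is set up; the substance lives in checking that the parabolic hypothesis of the extension theorem is satisfied and in producing the nontrivial $E$ over $\P^1_u$, neither of which requires new ideas beyond the results already established.
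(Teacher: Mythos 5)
Your overall assembly---produce a non-trivial generically trivial $\bG_u$-bundle $E$ over $\P^1_u$ from the isotropy of $\bG_u$, extend it to a $\bG$-bundle over $\P^1_U$ trivial off $Z$ via Theorem~\ref{ThMaina}, and conclude non-triviality from Theorem~\ref{Th:A}\eqref{thpr:LineC}---is exactly the paper's argument. But there is a genuine gap at the point you yourself flag as delicate, and neither of your proposed remedies closes it. If you take $Z$ to be a section of $\A^1_U\to U$ (e.g.\ the zero section), then $Z\cong U$ and $\bG_Z\cong\bG$; since $\bG$ is by hypothesis anisotropic at the generic point of $U$, the group scheme $\bG_Z$ cannot admit a proper parabolic subgroup scheme over $Z$ (such a subgroup scheme would restrict to a proper parabolic at the generic point, contradicting anisotropy there). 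So the key hypothesis of Theorem~\ref{ThMaina} fails for every section. ``Invoking the semi-local parabolic criterion on $Z$ directly'' does not help, because that criterion merely translates between isotropy and existence of parabolics, and $\bG_Z$ genuinely has neither when $Z$ is a section; and ``choosing $Z$ to concentrate at $u$'' is impossible, since a nonempty finite \'etale subscheme of $\A^1_U$ is automatically surjective over the connected scheme $U$.

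What is actually needed---and what the paper supplies by citing \cite[Prop.~4.1]{FedorovPanin}---is a finite \'etale $Z\subset\A^1_U$ that is a genuinely nonsplit multisection of $\A^1_U\to U$, chosen so that the pullback $\bG_Z$ acquires a proper parabolic subgroup scheme on each connected component of $Z$ while $Z_u$ still has a $k(u)$-rational point. The existence of such a $Z$ is a nontrivial statement exploiting the isotropy of $\bG_u$ at the closed point (roughly: the scheme of parabolics of a fixed proper type is smooth projective over $U$ with a $k(u)$-point, and one extracts from it a finite \'etale multisection embedded in $\A^1_U$). This is the one substantive input your proposal is missing; the remainder of your argument (the choice of $E$, which the paper obtains from \cite[Th.~3.8(b)]{GilleTorseurs}, and the final combination of Theorems~\ref{ThMaina} and~\ref{Th:A}\eqref{thpr:LineC}) agrees with the paper.
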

\begin{proof}
Recall that generically trivial $\bG_u$-bundles over $\P^1_u$ are classified by the Weyl group orbits of the co-characters of a maximal split torus (see~\cite[Thm.~3.8(b)]{GilleTorseurs} and~\cite{GilleErratum}). Thus, since $\bG_u$ is isotropic, there is a non-trivial $\bG_u$-bundle $E$ over $\P^1_u$ such that~$E$ is trivial generically. By~\cite[Prop.~4.1]{FedorovPanin} we can choose $Z\subset\A^1_U$ satisfying the conditions of Theorem~\ref{ThMaina}. It remains to apply Theorem~\ref{ThMaina} and Theorem~\ref{Th:A}\eqref{thpr:LineC}.
\end{proof}
This corollary shows that~\cite[Thm.~1.3]{PaninStavrovaVavilov} is not true without the isotropy condition. Also, it is clear that a bundle  $\cE$ in this corollary is not isomorphic to the pullback of any $\bG$-bundle over $U$. Indeed, since $U$ is local and $k$ is infinite, we can choose $a\in k$ such that $a\times_k U$ does not intersect $Z$ (we view $a$ as a rational point of $\A_k^1$). Now, if $\cE$ is isomorphic to the pullback of $\cF$, then restricting $\cE$ to $a\times_k U$, we would see that $\cF$ is trivial.

\subsection{Counterexamples}\label{sect:counter}
Now we will give some explicit examples. Through the end of Section~\ref{sect:counter} we assume that $k$ is algebraically closed.
Let $G$ be a $k$-group, let $H\subset G$ be a $k$-subgroup. Recall that an $H$-bundle $\cH$ is \emph{a reduction of a $G$-bundle $\cG$ to $H$} if there is an isomorphism of $G$-bundles $G\times^H\cH\approx\cG$ (see Section~\ref{sect:associated} for more details on associated spaces). Recall also that the group scheme of automorphisms of a principal $G$-bundle is called \emph{a strongly inner form\/} of $G$ (again, see Section~\ref{sect:associated} for more details).

\begin{theorem}\label{th:exot}
Let $G$ be a simple simply-connected $k$-group, where $k$ is an algebraically closed field. Let $U$ be a regular local $k$-scheme. Assume that there is a~$G$-bundle over~$U$ that cannot be reduced to a proper parabolic subgroup of $G$ at the generic point of~$U$. Then

\stepzero\noindstep\label{th:exotA} There is a $G$-bundle over $\A^1_U$ not isomorphic to the pullback of a $G$-bundle over $U$.

\noindstep\label{th:exotB} There is a $U$-group scheme $\bG$ over $U$ such that $\bG$ is a strongly inner form of $G$, and a non-trivial $\bG$-bundle $\cE$ over $\A^1_U$ such that $\cE$ is trivial away from a subscheme $Y\subset\A^1_U$ finite and \'etale over $U$.
\end{theorem}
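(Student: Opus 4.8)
The plan is to deduce both statements from the extension theorem and triviality criterion already in hand, by passing to the inner twisted form of $G$ cut out by the given bundle. Let $\cP$ be a $G$-bundle over $U$ that cannot be reduced to a proper parabolic at the generic point, and set $\bG:=\Aut_G(\cP)$ (the inner twist $\cP\times^G G$ for the conjugation action). Since $\bG$ becomes isomorphic to $G$ over any étale cover trivializing $\cP$, it is again a simple simply-connected $U$-group scheme, and $\cP$ is naturally a $(\bG,G)$-bitorsor. Reductions of $\cP$ to a parabolic $P\subseteq G$ correspond to parabolic subgroup schemes of $\bG$ of the same type; hence the hypothesis says exactly that $\bG$ has no proper parabolic at the generic point, i.e. $\bG$ is anisotropic there. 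This $\bG$ is the twisted form demanded in part~\eqref{th:exotB}.

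Next I would check that $\bG$ is isotropic at the closed point $u$, so that Corollary~\ref{cor:example} applies. The fibre $\cP_u$ is a $G$-torsor over the residue field $k(u)$; since $k$ is algebraically closed and $k(u)$ is then algebraically closed (e.g. when $U$ is the local ring of a $k$-variety at a rational point), the vanishing of $H^1(k(u),G)$ makes $\cP_u$ trivial, so $\bG_u\cong G_{k(u)}$ is split, hence isotropic. Granting this, Corollary~\ref{cor:example} yields a non-trivial $\bG$-bundle $\cE$ over $\A^1_U$ whose restriction to $\A^1_U-Y$ is trivial for some $Y\subset\A^1_U$ finite and étale over $U$; this is precisely part~\eqref{th:exotB}.

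For part~\eqref{th:exotA} I would twist $\cE$ back into a $G$-bundle. The bitorsor $\cP$, pulled back to $p_U^*\cP$ over $\A^1_U$, induces an equivalence between $\bG$-bundles and $G$-bundles on $\A^1_U$ carrying the trivial $\bG$-bundle to $p_U^*\cP$ (see Section~\ref{sect:associated}); let $\cD:=\cP\times^{\bG}\cE$ be the $G$-bundle corresponding to $\cE$. Because the twisting datum $p_U^*\cP$ is itself pulled back from $U$, this equivalence commutes with pull-back from $U$, so $\cD$ is a pull-back from $U$ if and only if $\cE$ is. But if $\cE\cong p_U^*\cE_0$, then choosing $a\in k$ with the section $a\times_k U$ disjoint from $Y$---possible since $U$ is local and $k$ is infinite, exactly as in the proof of Corollary~\ref{cor:example}---and restricting along this section, which lies in $\A^1_U-Y$, forces $\cE_0\cong\cE|_{a\times_k U}$ to be trivial, hence $\cE$ trivial, contradicting Corollary~\ref{cor:example}. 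Therefore neither $\cE$ nor $\cD$ is a pull-back from $U$, which proves part~\eqref{th:exotA}.

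The main obstacle is the isotropy of $\bG$ at the closed point: all the rest is bookkeeping around cited results, but to invoke Corollary~\ref{cor:example} one genuinely needs that $\cP$ becomes reducible to a parabolic over $k(u)$, i.e. $\bG_u$ isotropic, and this is the step that really uses algebraic closedness of $k$ (controlling $k(u)$ and $H^1(k(u),G)$). A conceptual point worth flagging is why part~\eqref{th:exotA} is phrased as ``not a pull-back'' rather than ``non-trivial and trivial away from $Y$'': since $G$ is split, hence isotropic, Theorem~\ref{Th:A}\eqref{thpr:LineB} shows any $G$-bundle trivial on $\A^1_U-Y$ must be trivial, so $\cD$ cannot be trivial away from $Y$---indeed $\cD\cong p_U^*\cP$ there, which is non-trivial---and the only available failure of triviality for the split group $G$ is failure to be a pull-back. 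This forces the argument to route through the anisotropic twist $\bG$ and to track pull-backs rather than triviality.
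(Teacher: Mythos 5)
Your proposal is correct and follows essentially the same route as the paper: pass to the inner twist $\bG=\Aut(\cF)$ (anisotropic at the generic point by the reduction-to-parabolic correspondence, isotropic at the closed point over the algebraically closed $k$), invoke Corollary~\ref{cor:example} for part~\eqref{th:exotB}, and twist back by the bitorsor $p_U^*\cF$ for part~\eqref{th:exotA}, using that the twisting equivalence commutes with pull-back from $U$. Your explicit flagging of the isotropy of $\bG_u$ and of why part~\eqref{th:exotA} must be phrased via pull-backs rather than triviality matches the paper's (more tersely stated) reasoning.
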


This theorem will be proved in Section~\ref{ProofThExot}.

\begin{example}\label{example}
An algebraic group $G$ is called \emph{special}, if every $G$-bundle is locally trivial in Zariski topology. Consider $G=Spin(7,k)$. Then $G$ is a simply-connected simple $k$-group of type $B_3$. According to the classification of Grothendieck~\cite[Thm.~3]{GrothendieckTorsion}, $G$ is not special, that is, there exists a smooth connected
$k$-variety $X$ and a principal $G$-bundle $\cF$ over $X$ such that $\cF$ is not Zariski locally trivial. In fact, as Corollary~3 of~\cite[Thm.~1]{GrothendieckTorsion} shows, we can take any faithful representation $Spin(7,k)\hookrightarrow SL(n,k)$ and put $X=SL(n,k)/Spin(7,k)$, $\cF=SL(n,k)$. Let $x\in X$, set $U:=\spec\cO_{X,x}$.

\begin{lemma}
The $G$-bundle $\cF$ cannot be reduced to a proper parabolic subgroup of~$G$ over $U$.
\end{lemma}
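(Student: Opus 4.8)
The plan is to reduce the statement to the generic point and there to show that a reduction to a proper parabolic would force $\cF$ to be trivial. First I would observe that any reduction of $\cF$ to a parabolic subgroup scheme over $U$ restricts to one over the generic point $\omega$ of $U$, and that the local ring $\cO_{X,x}$ has fraction field $k(X)$, so that $\omega$ is simultaneously the generic point of $X$. It therefore suffices to prove that the restriction $\cF_\omega$, a $G$-torsor over the field $k(X)$, admits no reduction to a proper parabolic subgroup of $G$.

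The next step is to record that $\cF_\omega$ is nontrivial. As recalled above, the bundle $\cF=SL(N,k)\to X=SL(N,k)/G$ is not Zariski-locally trivial; I would exploit its homogeneity, namely that $SL(N,k)$ acts on the total space by left translation compatibly with its transitive action on $X$. A trivializing section over a dense open $V\subset X$ could then be translated by $k$-points of $SL(N,k)$ so as to cover all of $X$ (here $k=\bar k$ is used, so closed points are permuted transitively and the generic point lies in $V$), which would make $\cF$ Zariski-locally trivial. Since it is not, $\cF_\omega$ must be nontrivial.

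Now suppose, for contradiction, that $\cF_\omega$ reduces to a proper parabolic $P\subset G$. Since $G=Spin(7,k)$ is split over $k(X)$, both $P$ and a Levi $L\subset P$ are split. The reduction to $P$ can be pushed to a reduction to $L$, because torsors under the split unipotent radical $R_u(P)$ over the field $k(X)$ are trivial (dévissage along a central filtration with $\mathbb G_a$-quotients, using $H^1(k(X),\mathbb G_a)=0$). So it is enough to see that every $L$-torsor over $k(X)$ is trivial. I would use the exact sequence of pointed sets attached to $1\to L^{\mathrm{der}}\to L\to D\to1$, where $D=L/L^{\mathrm{der}}$ is a split torus, whence $H^1(k(X),D)=\ast$ by Hilbert 90; it then remains to prove $H^1(k(X),L^{\mathrm{der}})=\ast$, which forces $H^1(k(X),L)=\ast$ and hence the triviality of $\cF_\omega$, contradicting the previous paragraph.

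The main obstacle, and the only genuinely case-specific input, is this last point: identifying the derived groups of the Levi subgroups of $B_3$. Since $G$ is simply connected, the derived group $L^{\mathrm{der}}$ of any Levi is again simply connected, and deleting nodes from the Dynkin diagram of $B_3$ shows that $L^{\mathrm{der}}$ is, up to isomorphism, a product of copies of $SL_2$, $SL_3$ and $Sp_4$ (corresponding to the subsystems of type $A_1$, $A_1\times A_1$, $A_2$ and $B_2=C_2$). All of these are special groups, so $H^1(k(X),L^{\mathrm{der}})=\ast$, completing the argument. I expect the Dynkin-subdiagram bookkeeping, together with checking that $P$ and $L$ are genuinely split over $k(X)$ and that the derived group of a Levi is simply connected, to be the steps requiring the most care.
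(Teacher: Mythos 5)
Your argument is correct, but it reaches the contradiction by a genuinely different route than the paper. You restrict the putative reduction to the generic point $\omega$ and run the d\'evissage $P\to L\to L^{\mathrm{der}}$ purely in Galois cohomology over $k(X)$ to conclude that $\cF_\omega$ is trivial; the paper performs the same d\'evissage over the local scheme $U$ itself (a reduction to $P$ over $U$ yields a reduction to the derived group of a Levi over $U$, whose torsors over the local scheme $U$ are trivial because that group is special), so that $\cF|_U$ is trivial. The combinatorial core --- that the derived groups of Levi factors of proper parabolics in the simply connected group of type $B_3$ are again simply connected (Borel--Tits) and are among $Sp(4)$, $SL(3)$, $SL(2)\times SL(2)$, $SL(2)$, all special --- is identical in both proofs. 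The real divergence is the final step, turning triviality into a contradiction with non-specialness: the paper invokes the Grothendieck--Serre conjecture (known in this setting) to propagate triviality from one local ring to Zariski-local triviality over all of $X$, whereas you exploit the homogeneity of the specific model $\cF=SL(N,k)\to SL(N,k)/Spin(7,k)$, translating a rational section by $k$-points of $SL(N,k)$ (using $k=\bar k$ for density and transitivity on closed points) to cover $X$ by trivializing opens. Your translation trick buys independence from Grothendieck--Serre, which is a deep input; the price is that it is tied to the homogeneous-space realization of $\cF$, while the paper's argument applies verbatim to any $G$-bundle on a smooth variety that fails to be Zariski-locally trivial.
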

\begin{proof}
Let $P$ be a proper parabolic subgroup of $G$, let $L$ be the derived group of a~Levi factor of $P$. Then $L$ is trivial or semi-simple of type $B_2=C_2$, $A_2$, $A_1\coprod A_1$, or~$A_1$. By~\cite[Cor.~4.4]{Borel-Tits2} $L$ is also simply-connected, thus it is trivial or isomorphic to $Sp(4)$, $SL(3)$, $SL(2)\times SL(2)$, or $SL(2)$. If it was possible to reduce~$\cF$ to $P$ over~$U$, it would be possible to reduce it to $L$, but all possible groups $L$ are special, so every principal $L$-bundle over $U$ is trivial. Thus $\cF$ would be trivial over $U$. But then the positive answer to the conjecture of Grothendieck and Serre (see~Section~\ref{subsect:GrSerre} and~\cite[Thm.~1]{FedorovPanin}) would imply that $\cF$ is Zariski locally trivial over $X$.
\end{proof}

Since the point $x$ was arbitrary, we see that $\cF$ cannot be reduced to a proper parabolic subgroup of $G$ at the generic point $\omega$ of $X$. Thus Theorem~\ref{th:exot} is applicable to $Spin(7,k)$ and $U$. In particular, there is a $Spin(7,k)$-bundle over $\A^1_U$ not isomorphic to a pullback from $U$. Also, according to the theorem, there is a strongly inner form $\bG$ of $Spin(7,k)$ over~$U$ and a non-trivial $\bG$-bundle $\cE$ over $\A^1_U$ such that~$\cE$ is trivial on a complement of a subscheme $Y\subset\A^1_U$ such that $Y$ is finite and \'etale over $U$.
\end{example}

\begin{remarks}\label{rem:CounterExamples}
\stepzero\noindstep We could have taken $G=Spin(8,k)$ or a $k$-group of type $G_2$ as well. We expect that similar examples exist for any non-special simple $k$-group.

\noindstep\label{rem:Parimala} An example of a principal $SO(4,\R)$-bundle over $\A^1_R$ not isomorphic to a pullback from $\spec R$ has been constructed by Parimala, see~\cite[Thm.~2.1]{ParimalaFailure}. (Here~$R$ is a localization of $\R[x]$.) However, in this case the field is not algebraically closed and the group is not simply-connected. Another counterexample with the same $R$ and $G=SL(1,Q)$, where $Q$ is the real quaternion algebra has been suggested by the referee. In this case, the group is not a strongly inner form of a split group (and again the field is not algebraically closed).
\end{remarks}

\subsection{A generalization}\label{sect:generalization} As before, let $U=\spec R$ be the spectrum of a regular local $k$-algebra, where $k$ is an infinite field; let $u\in U$ be the closed point. Let $\bG$ be a simple simply-connected $U$-group scheme. Let $Z\subset\P^1_U$  be a subscheme finite over $U$, let $Y\subset\P^1_U$ be a subscheme finite and \'etale over $U$. Assume that $\cE$ is a $\bG$-bundle over $\P^1_U$ trivial over $\P^1_U-Z$. We wonder whether it is trivial over $\P^1_U-Y$. Assume additionally that $Y_u:=Y\times_Uu$ has a $k(u)$-rational point, where $k(u)$ is the residue field of $u$. Note that $Y$ may be disconnected in which case it has many generic points.
\begin{theorem}\label{Th:B} Let $\cE$ be a $\bG$-bundle over $\P_U^1$ trivial on $\P_U^1-Z$.

\stepzero
\noindstep\label{thprB:LineA} If $\cE_u:=\cE|_{\P_u^1}$ is a trivial $\bG_u$-bundle, then $\cE$ (and thus $\cE|_{\P^1_U-Y}$) is trivial.

\noindstep\label{thprB:LineB} If $\bG_Y:=\bG\times_UY$ is isotropic, then $\cE|_{\P^1_U-Y}$ is trivial.

\noindstep\label{thprB:LineC} If $\bG_Y$ is anisotropic at every generic point of $Y$, and $\cE_u$ is not a trivial $\bG_u$-bundle, then $\cE|_{\P_U^1-Y}$ is not trivial.
\end{theorem}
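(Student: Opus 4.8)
The plan is to prove the three assertions separately, deducing~\eqref{thprB:LineA} and~\eqref{thprB:LineB} from results already in the literature and reserving the affine Grassmannian machinery for the genuinely new statement~\eqref{thprB:LineC}. For~\eqref{thprB:LineA} I would argue exactly as in Theorem~\ref{Th:A}\eqref{thpr:LineA}: triviality of a $\bG$-bundle on $\P^1$ is an open condition in a family over the base (see~\cite[Prop.~5.1]{FedorovPanin}). Since $\cE_u$ is trivial and $U$ is local with closed point $u$, the open triviality locus in $U$ contains $u$ and hence is all of $U$; together with the rigidity of the trivial bundle this forces $\cE$ itself to be trivial over $\P^1_U$, and a fortiori over $\P^1_U-Y$.

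For~\eqref{thprB:LineB} the plan is to reduce to the isotropic triviality theorem~\cite[Th.~1.3]{PaninStavrovaVavilov}, mirroring Theorem~\ref{Th:A}\eqref{thpr:LineB}. Restricting the given trivialization, $\cE|_{\P^1_U-Y}$ is a bundle on $\P^1_U-Y$, which is affine over $U$ because the finite \'etale $Y$ is surjective and so meets every fibre of $\P^1_U\to U$; moreover it is trivial away from a subscheme finite over $U$. Using that $Y_u$ carries a $k(u)$-rational point and that $Y$ is \'etale over $U$, one localizes along $Y$, where $\bG$ becomes isotropic, to present this as an affine-line situation to which~\cite{PaninStavrovaVavilov} applies, and then patches the resulting trivializations.

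The heart of the theorem is~\eqref{thprB:LineC}, which I would prove by contradiction using the affine Grassmannian $\mathrm{Gr}_\bG$ and its Bruhat/Cartan decomposition. Suppose $\cE|_{\P^1_U-Y}$ were trivial. Fixing a trivialization and using that $Y$ is finite \'etale over $U$, so that its formal neighbourhood $D_Y$ in $\P^1_U$ is a formal disc over $Y$, the bundle $\cE$ is recovered by gluing the trivial bundle on $\P^1_U-Y$ to the trivial bundle on $D_Y$ along a transition element $g\in\bG(\dot D_Y)$. This yields a $U$-morphism $s_g\colon Y\to\mathrm{Gr}_\bG$, and $\cE$ is globally trivial precisely when $s_g$ agrees with the unit section $e\colon U\to\mathrm{Gr}_\bG$. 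Since $\mathrm{Gr}_\bG$ is ind-proper, hence separated, over $U$, the locus in $Y$ where $s_g=e$ is closed. Evaluating at a generic point $\eta$ of $Y$: as $Y\to U$ is finite, $\eta$ lies over the generic point of $U$, so $\bG_\eta$ is anisotropic by hypothesis. Here the structural input enters: $\bG$ has good reduction over $k(\eta)[[t]]$, so its split rank over $k(\eta)((t))$ equals that over $k(\eta)$, namely zero; thus $\bG_{k(\eta)((t))}$ is anisotropic and its Cartan decomposition degenerates to $\bG(k(\eta)((t)))=\bG(k(\eta)[[t]])$, forcing $s_g(\eta)=e$. The closed locus $\{s_g=e\}$ therefore contains every generic point of the reduced scheme $Y$ and so equals $Y$; hence $s_g=e$, $\cE$ is globally trivial, and in particular $\cE_u$ is trivial, contradicting the hypothesis.

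The main obstacle is the structural analysis of $\mathrm{Gr}_\bG$ for a non-split semi-simple group scheme underlying~\eqref{thprB:LineC}: that $\mathrm{Gr}_\bG$ is separated over $U$, and that anisotropy of a fibre collapses the corresponding fibre of the Grassmannian to the base point via the Cartan decomposition over $k(\eta)((t))$ combined with the invariance of split rank under the good-reduction extension $k(\eta)((t))/k(\eta)$. These are precisely the properties of affine Grassmannians of group schemes developed in the body of the paper; granted them, the specialization and density argument above is routine. A secondary technical point is the bookkeeping identifying $\cE$, trivial on $\P^1_U-Y$, with a genuine $Y$-point of $\mathrm{Gr}_\bG$, which uses that $Y$ is finite and \'etale over $U$.
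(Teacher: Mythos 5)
Your treatments of parts~\eqref{thprB:LineA} and~\eqref{thprB:LineC} match the paper's. For~\eqref{thprB:LineA} the paper likewise invokes the openness of the triviality locus (\cite[Prop.~5.1]{FedorovPanin}). For~\eqref{thprB:LineC} the paper proves the contrapositive (Theorem~\ref{ThMainb}) by exactly your route: the trivialization on $\P^1_U-Y$ turns $\cE$ into a $Y$-point $s$ of $Gr_\bG$, anisotropy at each generic point $\omega$ of $Y$ forces $s(\omega)$ into the unit stratum, and closedness of that stratum propagates this over all of the reduced scheme $Y$. The only difference is one of phrasing: where you invoke the Cartan decomposition of $\bG\bigl(k(\omega)((t))\bigr)$ together with the invariance of the split rank under the good-reduction extension $k(\omega)((t))/k(\omega)$, the paper extracts from a point of a nonzero stratum $Gr^{\hat\lambda}_\bG$ a \emph{proper parabolic subgroup of $\bG\times_U\omega$ over $k(\omega)$ itself} via the morphism $Gr^{\hat\lambda}_\bG\to F^{\hat\lambda}_\bG$ of Proposition~\ref{Pr:TwistedBB}; these are two formulations of the same structural input (Propositions~\ref{pr:twistedcells} and~\ref{Pr:TwistedBB}), so your outline is sound modulo that machinery.

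The genuine gap is in part~\eqref{thprB:LineB}. You propose to reduce it to \cite[Th.~1.3]{PaninStavrovaVavilov}, ``mirroring Theorem~\ref{Th:A}\eqref{thpr:LineB}'', but the hypotheses differ in an essential way: Theorem~\ref{Th:A}\eqref{thpr:LineB} assumes $\bG$ is isotropic over $U$, which is what the cited theorem requires, whereas Theorem~\ref{Th:B}\eqref{thprB:LineB} only assumes that $\bG_Y=\bG\times_UY$ is isotropic. Isotropy over the finite \'etale cover $Y$ does not imply isotropy over $U$ --- this is precisely the situation exploited in Theorem~\ref{ThMaina} and Corollary~\ref{cor:example}, where $\bG$ is anisotropic at the generic point of $U$ while $\bG_Z$ admits a proper parabolic --- so the theorem of Panin--Stavrova--Vavilov is simply not applicable, and ``localizing along $Y$ and patching trivializations'' does not produce an instance of it: $\P^1_U-Y$ is affine over $U$ but is not an affine line over any base on which $\bG$ is known to be isotropic, and the patching step is exactly where the difficulty sits. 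The paper's actual mechanism is different: from a pair of opposite parabolics of $\bG_Y$ one forms the elementary subgroup functor $\bE$ generated by their unipotent radicals, proves the lifting property that $Gr_\bG(Y)\to Gr_\bG(Y_u)$ is surjective (Proposition~\ref{pr:GrSurj}), uses it to replace $\cE$ by a modification $\cF$ at $Y$ with $\cF_u$ trivial, and only then applies part~\eqref{thprB:LineA} to $\cF$ to conclude that $\cE|_{\P^1_U-Y}\cong\cF|_{\P^1_U-Y}$ is trivial. Without this argument (or an explicit appeal to \cite[Th.~3]{FedorovPanin}, which is the same proof), your part~\eqref{thprB:LineB} remains unproved.
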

Clearly, this theorem implies Theorem~\ref{Th:A} upon taking $Y$ to be the infinity divisor $\infty\times U$ of $\P_U^1$ (so that $\A_U^1=\P_U^1-Y$).
\begin{proof}
Part~\eqref{thprB:LineA} follows from~\cite[Prop.~5.1]{FedorovPanin} (or from~\cite[Thm.~9.6]{PaninStavrovaVavilov}). Part~\eqref{thprB:LineB} is essentially~\cite[Thm.~3]{FedorovPanin}. (In~\cite{FedorovPanin} we work with semi-local rings. In the present paper we only consider local rings to simplify notation. The generalization to semi-local case is, however, straightforward.) We will give a proof based on affine Grassmannians in Section~\ref{sect:modifications}. Part~\eqref{thprB:LineC} follows from the next, more general theorem.
\end{proof}

\begin{theorem}\label{ThMainb}
Let $U$ be any affine integral $k$-scheme and let $Y\subset\P_U^1$ be finite and \'etale over~$U$. Assume that $\bG$ is a semi-simple $U$-group scheme such that $\bG_Y$ is anisotropic at every generic point of $Y$. Assume that $\cE$ is a principal $\bG$-bundle over $\P^1_U$ such that  $\cE|_{\P^1_U-Y}$ is a trivial bundle. Then $\cE$ is a trivial bundle.
\end{theorem}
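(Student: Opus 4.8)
The plan is to encode $\cE$ as a modification of the trivial bundle along $Y$ and to read off its triviality from the Bruhat decomposition of the affine Grassmannian, using anisotropy to force the modification type to vanish. First I would set up the affine-Grassmannian description. Since $\bG$ is smooth and affine, its restriction to the formal disc $D_Y$ along the finite \'etale subscheme $Y$ is trivial, while $\cE|_{\P^1_U-Y}$ is trivial by hypothesis; Beauville--Laszlo gluing along $Y$ then presents $\cE$, up to a choice of trivialization on $\P^1_U-Y$, as a section of the affine Grassmannian $\mathrm{Gr}_{\bG_Y}\to Y$ (a modification supported on $Y$). The bundle $\cE$ is trivial precisely when this modification class is the base point, i.e. when the transition cocycle in $\bG(\dot D_Y)$ admits a Birkhoff factorization through $\bG(\P^1_U-Y)\cdot\bG(D_Y)$. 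Thus the whole problem becomes the vanishing of the modification type.

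Next I would establish triviality over the generic point $\omega=\spec K$ of $U$. If $Y=\emptyset$ the bundle is already trivial, so fix a generic point $\eta$ of a component of $Y$; its residue field $k(\eta)$ is a finite separable extension of $K$, and by hypothesis $\bG_{k(\eta)}$ is anisotropic. Since a split torus stays split under field extension, isotropy ascends along $K\hookrightarrow k(\eta)$, so $\bG_K=\bG_\omega$ is itself anisotropic. Now $\cE_\omega$ is a generically trivial $\bG_\omega$-bundle on $\P^1_\omega$, and by the classification of such bundles via $W$-orbits of cocharacters of a maximal split torus (\cite[Th.~3.8(b)]{GilleTorseurs}), the maximal split torus of the anisotropic group $\bG_\omega$ being trivial forces $\cE_\omega$ to be trivial.

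Finally I would propagate triviality from $\omega$ to all of $U$ using the Bruhat decomposition of $\mathrm{Gr}_{\bG_Y}$ over $Y$. Organizing the modification by its Schubert (Cartan) type gives a coweight-valued invariant along $Y$ whose strata are cut out by closed Schubert subschemes, with the base point the unique minimal, closed stratum; hence this type can only degenerate to smaller coweights under specialization within the integral scheme $Y$. Choosing a trivialization on $\P^1_U-Y$ whose restriction to $\omega$ extends the global trivialization of $\cE_\omega$ produced above, the type vanishes at every generic point of $Y$ and therefore, by semicontinuity, at every point of $Y$; since $U$ and hence $Y$ is reduced, the transition cocycle then lies in $\bG(D_Y)$ and $\cE$ is trivial. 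The main obstacle is exactly the production of such a global trivialization: a priori the Schubert type can jump upward at points of $Y$ lying over a closed point at which $\bG$ becomes isotropic, so spreading the generic Birkhoff factorization from $\omega$ to all of $U$ is not formal. This is precisely where the Bruhat decomposition for semi-simple group schemes is needed, the anisotropy at the generic points of $Y$ ensuring that no nonzero-coweight stratum can accommodate the specialization of the trivial generic modification.
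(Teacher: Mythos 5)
Your overall frame --- Beauville--Laszlo gluing, the section $s\colon Y\to Gr_\bG$ of the affine Grassmannian, the Bruhat stratification, and closedness of the base stratum $Gr^0_\bG$ combined with reducedness of $Y$ --- is exactly the paper's. The gap is in how you make the modification type vanish at the generic points of $Y$. You first prove that $\cE_\omega$ is trivial over $\P^1_\omega$ (anisotropy descends to $\bG_K$, then Gille's classification), and then you need ``a trivialization on $\P^1_U-Y$ whose restriction to $\omega$ extends the global trivialization of $\cE_\omega$.'' Such a trivialization need not exist: your given trivialization of $\cE|_{\P^1_U-Y}$ differs from the restriction of a global trivialization of $\cE_\omega$ by some $g\in\bG(\P^1_\omega-Y_\omega)$, and $g$ generally does not lift to $\bG(\P^1_U-Y)$ (it only spreads out over a dense open of $U$). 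This is not a cosmetic issue, because the Schubert type of $s(\eta)$ genuinely depends on the trivialization: left multiplication by $g|_{\dot D_{Y_\omega}}$ does not preserve the $L^+\bG$-orbits, and the trivial bundle admits nontrivial presentations of itself as a modification (already for $SL_2$ over a field, a unipotent matrix with entry $t^{-1}$ lies in $SL_2(\P^1-\{0\})$ and defines a nonzero point of the Grassmannian at $0$ whose associated modification of the trivial bundle is again trivial). So triviality of $\cE_\omega$ alone does not place $s(\eta)$ in $Gr^0_\bG$ for the trivialization you actually have over all of $\P^1_U-Y$.

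The paper closes exactly this hole with Proposition~\ref{Pr:TwistedBB}: if $s(\eta)\in Gr^{\hat\lambda}_\bG$ with $\hat\lambda\neq 0$, composing with the morphism $Gr^{\hat\lambda}_\bG\to F^{\hat\lambda}_\bG$ produces a \emph{proper} parabolic subgroup scheme of $\bG$ over $k(\eta)$, contradicting anisotropy at $\eta$ --- and this works for an arbitrary trivialization, with no detour through $\cE_\omega$ or the classification of bundles on $\P^1_K$. (One can deduce from this that for anisotropic groups the type is independent of the trivialization, but that deduction \emph{is} the parabolic argument, so your route cannot avoid it.) If you replace your ``compatible trivialization'' step by this argument, the remainder of your proof coincides with the paper's. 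Finally, note the internal tension in your last paragraph: you assert both that the type can only decrease under specialization and that it ``can jump upward'' at special points; only the former is true (it is what the closure relations give), and it is all you need once the generic type is known to be zero.
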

This theorem will be proved in Section~\ref{sect:exoticism}. The idea of the proof is as follows. The pair $(\cE,\tau)$, where $\tau$ is a trivialization of $\cE$ on $\P^1_U-Y$, can be viewed as a modification of the trivial bundle at $Y$, or equivalently as a $Y$-point of an affine Grassmannian. However, to any non-trivial modification one can assign a proper parabolic subgroup of $\bG$ at the generic point of $Y$. Since such subgroup does not exist, the modification is necessarily trivial. In other words, the bundle $\cE|_{\P^1_U-Y}$ admits a unique extension to $\P_U^1$. Note that similar considerations appeared in~\cite{RaghunathanOnAffineSpace}.

\begin{remark}
We see that Theorem~\ref{Th:B}\eqref{thprB:LineC} holds for all semi-simple group schemes. I expect that whole Theorem~\ref{Th:B} is true for all semi-simple group schemes. In Theorem~\ref{ThMaina}, however, if $\bG$ is not simply-connected, one should require that $E$ belongs to the connected component of the trivial bundle in the stack of $\bG_u$-bundles over~$\P_u^1$.
\end{remark}

\subsection{Affine Grassmannians} We briefly summarize the results of Section~\ref{sect:Grassm}.
Let~$U$ be a connected affine scheme, let $\bG$ be a smooth affine $U$-group scheme. In this case we define the \emph{affine Grassmannian\/} $Gr_\bG$. This is a sheaf on the big \'etale site $\Aff/U$ assigning to a $U$-scheme $T$ the set of isomorphism classes of pairs
\[
    \{(\cE,\tau)\,|\;\cE\text{ is $\bG$-bundle over }\P_U^1, \tau\text{ is a trivialization of $\cE$ on $\A^1_U$}\}.
\]
In the case, when $U$ is a $k$-scheme and $\bG$ is a semi-simple $U$-group scheme, we show that $Gr_\bG$ is a (strict) ind-projective ind-scheme over $U$ (Proposition~\ref{pr:indscheme}). We also develop a stratification
\[
    Gr_\bG=\coprod Gr^{\hat\lambda}_\bG,
\]
where $\hat\lambda$ ranges over the set of dominant co-characters of the split $k$-group of the same type as $\bG$ modulo the action of outer automorphisms (see Proposition~\ref{pr:twistedcells}). Finally, for each cell $Gr^{\hat\lambda}_\bG$ we construct a morphism to the scheme of parabolic subgroup schemes of $\bG$, see Proposition~\ref{Pr:TwistedBB}.

\subsection{Relation to a conjecture of Grothendieck and Serre}\label{subsect:GrSerre}
Let, as usual, $R$ be a regular local ring, $U=\spec R$. Let $\bG$ be a reductive $U$-group scheme, let $\cG$ be a principal $\bG$-bundle over $U$. A conjecture of Grothendieck and Serre (see~\cite[Remarque, p.31]{SerreFibres}, \cite[Remarque 3, p.26-27]{GrothendieckTorsion}, and~\cite[Remarque~1.11.a]{GrothendieckBrauer2}) predicts that $\cG$ is trivial if it is trivial over $\spec K$, where $K$ is the fraction field of~$R$.

In~\cite{PaninPurity,PaninStavrovaVavilov} Panin, Stavrova, and Vavilov proved the conjecture in the case, when $R$ contains an infinite field, $\bG$ is isotropic (in fact, they proved a generalization of the conjecture). One of the main steps in their proof was Theorem~\ref{Th:A}\eqref{thpr:LineB} (see~\cite[Thm.~1.3]{PaninStavrovaVavilov}). This is exactly where the isotropy condition was used. There was a hope that it would be possible to prove this theorem without the isotropy condition, thus proving the conjecture of Grothendieck and Serre without the isotropy condition.

A couple years later Panin and myself applied the technique of affine Grassmannians to the problem. It was soon clear that
Theorem~\ref{Th:A}\eqref{thpr:LineB} is unlikely to be true without the isotropy assumption. Soon, we figured out how to modify the statement: Theorem~\ref{Th:A}\eqref{thpr:LineB} was replaced by Theorem~\ref{Th:B}\eqref{thprB:LineB}; the latter theorem was sufficient to prove the conjecture of Grothendieck and Serre for all regular local rings~$R$ containing infinite fields and all reductive group schemes $\bG$.

\subsection{Organization of the paper}
In Section~\ref{sect:PrincipalBundles} we discuss associated spaces of principal bundles and strongly inner forms of group schemes. In Section~\ref{sect:gluing} we explain how to glue a principal bundle defined away from a divisor with a bundle over ``completed product'' of this divisor with a formal disc. In Section~\ref{sect:Grassm} we introduce the main objects of our study, that is, affine Grassmannians. We explain that affine Grassmannians are ind-schemes and study a Bruhat decomposition of an affine Grassmannian into the union of quasi-projective $U$-schemes. These results are used in Section~\ref{sect:exoticism} to prove Theorem~\ref{ThMainb} and in Section~\ref{sect:constructing} to prove Theorem~\ref{ThMaina}. In the second half of Section~\ref{sect:constructing} we reprove Theorem~\ref{Th:B}\eqref{thprB:LineB} using the affine Grassmannians.

\section{Generalities on principal bundles}\label{sect:PrincipalBundles}
\subsection{Generalities on associated spaces}\label{sect:associated} Let $S$ be a scheme; recall that an \emph{$S$-space\/} is a sheaf on the big \'etale site of schemes over $S$ (denoted by $\Et/S$.).

Let $\bH$ be an $S$-group scheme; let $\cH$ be a \emph{right} $\bH$-bundle over $S$. Let $\cX$ be an $S$-space acted upon by~$\bH$ on the left. The \emph{associated $S$-space $\cH\times^{\bH}\cX$} is the sheafification of the following presheaf on $\Et/S$
\[
T\mapsto(\cH(T)\times\cX(T))/\sim,
\]
where $\sim$ is the equivalence relation $(fg,x)\sim(f,gx)$ for all $f\in\cH(T)$, $g\in\bH(T)$, $x\in\cX(T)$. If $\bH$ is smooth over $S$, then the $\bH$-bundle $\cH$ is \'etale locally trivial, and one sees that $\cH\times^{\bH}\cX\to S$ is \'etale locally over $S$ isomorphic to $\cX\to S$.

Equivalently, composing the action of $\bH$ on $\cH$ with the group inversion, we can view~$\cH$ as a left $\bH$-bundle, then $\cH\times_S\cX$ acquires an action of $\bH$, and we have $\cH\times^\bH\cX=(\cH\times_S\cX)/\bH$. We also call $\cH\times^\bH\cX$ \emph{the twist of $\cX$ by $\cH$}. Similarly, if $\cX$ is an $S$-space with a right action of $\bH$, and $\cH$ is a left $\bH$-bundle, we can form the associated space $\cX\times^{\bH}\cH$.

More generally, let $\bH$ and $\cX$ be as above, let $S'$ be an $S$-scheme and let $\cH$ be a $\bH$-bundle over $S'$ (that is, an $\bH\times_SS'$-bundle). In this case we use the notation
\begin{equation}\label{eq:notation}
    \cH\times^{\bH}\cX:=\cH\times^{\bH\times_SS'}(\cX\times_SS').
\end{equation}
Again, if $\bH$ is smooth over $S$, then $\cH\times^{\bH}\cX\to S'$ is \'etale locally over $S'$ isomorphic to $\cX\times_SS'\to S'$.

The following proposition seems to be well-known. However, I was unable to find a reference, so I give a proof for the sake of completeness.
\begin{proposition}\label{pr:scheme}
Let $k$ be a field. Assume that $H$ is an affine $k$-group of finite type, $\cH$ is an $H$-bundle over a $k$-scheme $S$, and $\cX$ is a quasi-projective $k$-scheme. Then $\cH\times^H\cX$ is a scheme.
\end{proposition}
\begin{proof}
We will reduce the statement to the results of~\cite{SerreFibres}. First of all, the statement is obvious if $\cX$ is an affine scheme, because $\cH$ is locally trivial in \'etale topology, and affine schemes can be glued in \'etale topology.

Next, we claim that $\cH$ is \emph{isotrivial}, that is, for all $s\in S$ there is a Zariski neighborhood $S'$ of $s$ and an \'etale \emph{finite\/} morphism $S''\to S'$ such that the pullback of $\cH$ to $S''$ is a trivial $H$-bundle. Indeed, let $H\hookrightarrow GL(n)$ be a faithful representation (existing, e.g., by~\cite[Prop.~1.10]{BorelLinAlgGrps}), and consider the scheme $\cH\times^HGL(n)$ (this is a scheme because $GL(n)$ is an affine scheme). This is a principal $GL(n)$-bundle over $S$, and, replacing $S$ by a Zariski cover, we can assume that this bundle is trivial:
\[
    \cH\times^HGL(n)\approx S\times_k GL(n).
\]
(Indeed, using the equivalence between principal $GL(n)$-bundles and rank $n$ vector bundles, one checks that every $GL(n)$-bundle is Zariski locally trivial.) Now $\cH$ can be viewed as a reduction of the trivial $GL(n)$-bundle to $H$. As such, it corresponds to a morphism $S\to GL(n)/H$, in the sense that the bundle $\cH\to S$ is isomorphic to the pullback of $GL(n)\to GL(n)/H$, see~\cite[Proof of Lemma~2.2.3]{SorgerLecturesBundles}. Now the isotriviality of $\cH$ follows from~\cite[Prop.~3]{SerreFibres}, which claims that $GL(n)$ is an isotrivial $H$-bundle over $GL(n)/H$. It remains to use~\cite[Prop.~4]{SerreFibres}. (Note that there is a certain clash of terminology. The notion of `algebraic space' used in~\cite{SerreFibres} is different from what we usually mean by algebraic space now. In particular, algebraic spaces in~\cite{SerreFibres} are schemes.)
\end{proof}

Let $\bH'$ be a subgroup scheme of an $S$-group scheme $\bH$. Let $\cH$ be an $\bH$-bundle. A \emph{reduction of $\cH$ to $\bH'$} is an $\bH'$-bundle $\cH'$ together with an isomorphism of $\bH$-bundles $\cH'\times^{\bH'}\bH\simeq\cH$.

\subsection{Strongly inner forms and parabolic reductions}\label{sect:inner}
Now let $\bH$ be a reductive $S$-group scheme, $\cH$ and $\cH'$ be $\bH$-bundles over $S$. Then we have the $S$-scheme $\Iso_{\bH}(\cH,\cH')$ of $\bH$-bundle isomorphisms. The scheme $\Aut(\cH):=\Iso_{\bH}(\cH,\cH)$ is a reductive group scheme \'etale locally isomorphic to $\bH$ (our convention is that $\Aut(\cH)$ acts on the right on $\cH$). We call it \emph{a strongly inner form\/} of $\bH$. There is a natural action of $\Aut(\cH)$ on $\Iso_{\bH}(\cH,\cH')$ making the latter scheme an $\Aut(\cH)$-bundle over $S$.

\begin{proposition}\label{pr:parred}
Assume that $\bH$ is a split reductive $S$-group scheme, where $S$ is connected, $\cH$ is an $\bH$-bundle over $S$. If $\Aut(\cH)$ contains a proper parabolic subgroup scheme, then $\cH$ can be reduced to a proper parabolic subgroup scheme of~$\bH$.
\end{proposition}
\begin{proof}
Let $\bP\subset\Aut(\cH)$ be a proper parabolic subgroup scheme. Since $\bH$ is split, we can find a parabolic subgroup scheme $\bP'$ of $\bH$ such that $\bP$ and $\bP'$ are conjugate locally in \'etale topology. Consider the presheaf on $\Et/S$ defined by
\[
    T\mapsto\{s\in\cH(T):\bP'(T)s=s\,\bP(T)\}.
\]
Denote by $\cP$ the sheafification of this presheaf. We claim that $\cP$ is a principal $\bP'$-bundle over $S$. Indeed, it is clear that $\bP'$ acts on $\cP$, and we need to check that $\bP'$ is locally trivial in \'etale topology. Since the statement is \'etale local, we can assume that $\cH$ is a trivial bundle, then the statement follows from the fact that a parabolic subgroup is its own normalizer (see~\cite[Exp.~XXVI, Prop.~1.2]{SGA3-3}). It is easy to construct an $S$-morphism from the associated scheme $\bH\times^{\bP'}\cP$ to $\cH$. It remains to check that this morphism is an isomorphism. Again, it is an \'etale local statement, so we can assume that $\cH$ is trivial, making the statement obvious.
\end{proof}

\section{Gluing principal bundles}\label{sect:gluing}
In this section we assume that $U=\spec R$ is a connected affine scheme, $Y$ is a subscheme of $\A^1_U$ \'etale and finite over $U$. Note that $Y$ is automatically a closed subscheme of $\A^1_U$. Also, if $Y$ is non-empty, then it is surjective over $U$. Indeed, the projection $Y\to U$ is closed, since it is finite, and open, since it is \'etale.

For $Z=\spec A$, we define ``the formal disc over $Z$'' as $\spec A[[t]]$, where $A[[t]]$ is the ring of formal power series with coefficients in $A$. Denote this formal disc by~$D_Z$. Similarly, let $A((t))=A[[t]][t^{-1}]$ denote the ring of formal Laurent series. Let $\dot D_Z:=\spec A((t))$ be the ``punctured formal disc over $Z$''.

The main result of this section can be summarized as follows: \emph{There is a canonical commutative diagram of morphisms of $U$-schemes
\begin{equation}\label{eq:fpqccover}
\begin{CD}
\dot D_Y @>>> D_Y \\
@VVV @VVV \\
\P^1_U-Y @>>> \P^1_U.
\end{CD}
\end{equation}
Further, let $\bG$ be an affine flat $U$-group scheme. Given a principal $\bG$-bundle over $\P^1_U-Y$, a principal $\bG$-bundle over $D_Y$, and an isomorphism between their restrictions to $\dot D_Y$, we can glue the bundles into a principal $\bG$-bundle over $\P^1_U$.}

Below we will reduce this statement to the main result of~\cite{BeauvilleLaszlo}. The statement is well-known to specialists, at least in the case, when the projection $Y\to U$ is an isomorphism. Unfortunately, I could not find a reference. This gluing technique goes back to~\cite[Appendix]{FerrandRaynaud}. The advantage of~\cite{BeauvilleLaszlo} is that the schemes are not necessarily Noetherian.

\subsection{Constructing the commutative diagram} We need a general statement.
\begin{lemma}\label{lm:decomp}
There is a finite and \'etale surjective morphism $U'\to U$ such that $Y\times_UU'$ decomposes as $\coprod_{i=1}^l Y_i$ such that for all $i$ the projection $Y_i\to U'$ is an isomorphism.
\end{lemma}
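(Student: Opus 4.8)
The plan is to prove Lemma~\ref{lm:decomp}, which asserts the existence of a finite \'etale surjective base change $U'\to U$ that completely splits the finite \'etale cover $Y\to U$ into a disjoint union of copies of $U'$. The key observation is that this is exactly the statement that a finite \'etale morphism becomes ``split'' (i.e.\ trivialized as a covering) after pulling back along a suitable finite \'etale cover. First I would reduce to the connected case: since $U$ is connected and $Y\to U$ is finite \'etale, $Y$ has finitely many connected components, and it suffices to split each one and then take a common refinement (a fiber product) of the resulting covers, which remains finite \'etale and surjective over $U$. So I may assume $Y$ is connected, hence $Y\to U$ is a connected finite \'etale cover of some degree $n$.

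For a connected finite \'etale cover $f\colon Y\to U$ of degree $n$, the standard construction is to take $U'$ to be (a connected component of) the $n$-fold fiber product
\[
    Y\times_U Y\times_U\cdots\times_U Y
\]
with all the diagonals removed, that is, the scheme $Y^{(n)}$ parametrizing ordered $n$-tuples of distinct points of $Y$ lying over a common point of $U$. More precisely, I would set $U'$ equal to this open-and-closed subscheme of the $n$-fold fiber product. The projection $U'\to U$ is finite (being a locally closed subscheme of a finite $U$-scheme that is in fact closed, since the diagonals are open and closed in an \'etale setting) and \'etale, as a base change and restriction of \'etale morphisms; surjectivity follows because every geometric fiber of $Y\to U$ has exactly $n$ points, so distinct ordered $n$-tuples exist over every point of $U$. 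After base change to $U'$, the cover $Y\times_U U'\to U'$ acquires $n$ tautological sections $s_1,\dots,s_n$ (the $n$ projections), and these sections are pairwise disjoint by construction; the images $Y_i:=s_i(U')$ are therefore open and closed, pairwise disjoint, each isomorphic to $U'$ via $s_i$, and their union exhausts $Y\times_U U'$ because the fibers have exactly $n$ points.

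The main obstacle, and the point requiring the most care, is verifying that the sections cut out disjoint \emph{open and closed} pieces that actually cover $Y\times_U U'$, rather than merely embedding as closed subschemes. The crucial input is that $Y\to U$ is \'etale, so the diagonal $Y\hookrightarrow Y\times_U Y$ is open (as well as closed, since the map is also finite and hence separated); this is what guarantees that ``distinct points'' is an open-and-closed condition and that the locus $U'$ is itself open and closed in the full fiber product. I would then argue that over $U'$ each section $s_i$, being a section of a finite \'etale (hence unramified and separated) morphism, is a closed immersion, and that its image is simultaneously open because the morphism is \'etale. A clean way to package all of this is to reduce to geometric fibers: over each geometric point of $U'$ the fiber of $Y\times_U U'$ is a set of $n$ points, and the $n$ sections hit all of them bijectively; since everything in sight is finite \'etale over $U'$, the fiberwise decomposition propagates to a decomposition over $U'$. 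This completes the construction and shows $Y\times_U U'=\coprod_{i=1}^{n}Y_i$ with each $Y_i\xrightarrow{\sim}U'$, as required.
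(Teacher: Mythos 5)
Your proof is correct. It rests on exactly the same key fact as the paper's argument --- that the diagonal $\Delta\colon Y\to Y\times_UY$ of a finite \'etale morphism is open and closed, so ``being distinct'' is an open-and-closed condition --- but you package the construction differently: you build the splitting cover $U'$ in one step as the complement of all pairwise diagonals in the $n$-fold fiber product $Y\times_U\cdots\times_UY$ and then check that the $n$ tautological sections give the decomposition, whereas the paper proceeds by induction on $\deg(Y/U)$, base-changing to $Y$ itself and splitting off $\Delta(Y)$ from $Y\times_UY$ at each step. The two constructions produce essentially the same cover (your configuration space is what the paper's induction assembles one layer at a time); the paper's induction defers all the bookkeeping about disjointness and exhaustion of the sections to the induction hypothesis, while your one-shot version has to verify it directly via geometric fibers, which you do correctly.
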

\begin{proof}
Let $Y=\spec A$. Define
\[
    \deg(Y/U):=\max\dim_{R/\fm}(A/\fm A),
\]
where maximum is taken over all maximal ideals of $R$. This degree is finite, because it is bounded by the number of generators of $A$ as a module over $R$. In particular, $Y$ has finitely many connected components, since every component maps to $U$ surjectively. Thus we can assume that $Y$ is connected. We prove the lemma by induction on $\deg(Y/U)$. If $\deg(Y/U)=0$, then $Y=\emptyset$ and we are done. Otherwise, as was explained in the beginning of the section, the morphism $Y\to U$ is surjective.

Consider the projection to the second multiple $Y\times_U Y\to Y$. The diagonal morphism $\Delta:Y\to Y\times_U Y$ is \'etale
by~\cite[Ch.~6, Prop.~4.7(v)]{AltmanKleiman}. Thus $\Delta$ is open and closed so we can decompose $Y\times_U Y=\Delta(Y)\coprod Y'$. It is enough to prove the lemma for the morphism $Y'\to Y$ (instead of $Y\to U$). But
\[
    \deg(Y'/Y)=\deg(Y/U)-1,
\]
and we can apply the induction hypothesis.
\end{proof}

We define a morphism $j_Y$ as the composition
\[
    D_Y\to\A^1_Y\to\A^1_U.
\]
Here the first morphism is induced by the inclusion $A[t]\to A[[t]]$. The second morphism is the restriction of the group scheme multiplication morphism $\A^1_U\times_U\A^1_U\to\A^1_U$ to $\A^1_Y=\A^1_U\times_UY$. (Naively, $j_Y$ is thought as taking $(\epsilon,y)$ to $y+\epsilon$.) \emph{The right vertical arrow} in diagram~\eqref{eq:fpqccover} is the composition of $j_Y$ and the embedding $\A_U^1\to\P_U^1$.

We identify $Y$ with a closed subscheme of $D_Y$ given by the ideal $(t)\subset R[[t]]$. Then we have $\dot D_Y=D_Y-Y$.
\begin{lemma}\label{lm:preimage}
$j_Y^{-1}(Y)\subset Y$.
\end{lemma}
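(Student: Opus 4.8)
The plan is to reduce the statement to a single computation in the power series ring $A[[t]]$, where $Y=\spec A$, and in fact to prove the stronger scheme-theoretic equality $j_Y^{-1}(Y)=Y$. First I would unwind the definition of $j_Y$. The closed immersion $Y\hookrightarrow\A^1_U=\spec R[x]$ corresponds to a surjection $R[x]\twoheadrightarrow A$; write $y_0\in A$ for the image of $x$ and $I:=\ker(R[x]\to A)$, so that $Y=V(I)$ and, tautologically, $y_0$ generates $A$ as an $R$-algebra. Composing the inclusion $A[t]\hookrightarrow A[[t]]$ with the restriction of the additive group law $\A^1_U\times_U\A^1_U\to\A^1_U$ to $\A^1_Y$ (naively $(\epsilon,y)\mapsto y+\epsilon$), the map $j_Y$ is given on rings by $j_Y^*\colon R[x]\to A[[t]]$, $x\mapsto y_0+t$. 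Under the identification $Y=V(t)\subset D_Y$, the claim $j_Y^{-1}(Y)\subseteq Y$ becomes the assertion that the ideal $j_Y^*(I)\,A[[t]]$ has vanishing locus inside $V(t)$; I will show this ideal is exactly $(t)$.

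The key auxiliary object is a monic equation for $Y$. Assuming $A\ne 0$ (otherwise there is nothing to prove), let $f\in R[x]$ be the characteristic polynomial of multiplication by $y_0$ on the finite projective $R$-module $A$. Since $U$ is connected, $A$ has constant rank $n:=\operatorname{rank}_R A$, so $f$ is monic of degree $n$, and by Cayley--Hamilton $f(y_0)=0$ in $A$; hence $f\in I$ and $Y\subseteq V(f)$. Moreover the induced surjection $R[x]/(f)\twoheadrightarrow R[x]/I=A$ is a morphism of finite locally free $R$-modules of the same rank $n$, hence an isomorphism. Thus $I=(f)$ and $A\cong R[x]/(f)$; it is this identification that converts the abstract étale subscheme into an explicit equation.

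Next I would feed in the étale hypothesis. Because $A\cong R[x]/(f)$ is étale over $R$, it is unramified, so $\Omega_{A/R}\cong A/(f'(y_0))$ vanishes, i.e. $f'(y_0)\in A^{\times}$. Expanding the image of $f$ in $A[[t]]$ and using $f(y_0)=0$ gives
\[
    j_Y^*(f)=f(y_0+t)=f'(y_0)\,t+(\text{terms of order}\ge 2)=t\cdot u,
\]
where $u\in A[[t]]$ has constant term $f'(y_0)\in A^{\times}$ and is therefore a unit in $A[[t]]$. Consequently $j_Y^*(I)\,A[[t]]=\big(f(y_0+t)\big)=(t)$, so $j_Y^{-1}(V(f))=V(t)=Y$; combined with $Y\subseteq V(f)$ this yields $j_Y^{-1}(Y)\subseteq Y$, as desired (indeed with equality).

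The only genuinely non-formal input is the passage from ``$Y$ is finite étale'' to the two algebraic facts used above: that $I=(f)$ for the characteristic polynomial $f$, and that étaleness is precisely the invertibility of $f'(y_0)$ in $A$. I expect this translation to be the main point; everything else is the unit computation, which is immediate once $f'(y_0)$ is known to be a unit. As an alternative that sidesteps the characteristic polynomial, one can invoke Lemma~\ref{lm:decomp} to pass to a finite étale surjective cover $U'\to U$ over which $Y$ splits into sections $Y_i\cong U'$; for a section the equation is simply $f=x-y_0$ with $y_0\in R'$ and $f(y_0+t)=t$, making the claim transparent. This requires only checking that the formal disc construction commutes with the finite étale base change (which holds since finite projective modules commute with the infinite products defining power series) and that the containment of closed subschemes descends along the faithfully flat cover $U'\to U$.
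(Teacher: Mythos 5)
Your proof is correct, and it takes a genuinely different route from the paper. The paper first establishes that the formal disc construction commutes with finite \'etale base change (Lemma~\ref{lm:basechg}), uses this to reduce, via the splitting cover of Lemma~\ref{lm:decomp} and descent, to the case where $Y$ is a disjoint union of sections $V(t-r_i)$, and then computes the preimage ideal explicitly as $\bigl(\prod_j(t+r_i-r_j)\bigr)=(t)$ using that $r_i-r_j$ is a unit for $i\ne j$. You instead avoid any base change: writing $A\cong R[x]/(f)$ for the monic characteristic polynomial $f$ of multiplication by $y_0$ (Cayley--Hamilton plus the rank count give $I=(f)$), you translate \'etaleness into $f'(y_0)\in A^\times$ via $\Omega_{A/R}\cong A/(f'(y_0))$, and then the Taylor expansion $f(y_0+t)=t\cdot(f'(y_0)+O(t))$ exhibits the preimage ideal as exactly $(t)$. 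Your version is more self-contained (it does not need Lemmas~\ref{lm:decomp} and~\ref{lm:basechg} for this particular statement, though the paper needs them elsewhere anyway) and directly yields the scheme-theoretic equality $j_Y^{-1}(Y)=Y$; the paper's version is more elementary once the reduction is in place, since after splitting the only input is that distinct sections are disjoint. The one point worth spelling out in your write-up is the rank argument showing $R[x]/(f)\twoheadrightarrow A$ is an isomorphism (the kernel is a finite projective direct summand of rank zero, hence zero), and that the characteristic polynomial and Cayley--Hamilton make sense for a finite projective module of constant rank over the possibly non-Noetherian $R$; both are standard and localize correctly.
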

\begin{proof}
We start with
\begin{lemma}\label{lm:basechg}
Let $M$ be a finitely presented $A$-module, then the canonical morphism $M\otimes_AA[[t]]\to M[[t]]$ is an isomorphism.
\end{lemma}
\begin{proof}
Note that the functor, sending an $A$-module $M$ to $M[[t]]$ is exact. Then repeat the proof of~\cite[Prop.~10.13]{AtiyahMcdonald}.
\end{proof}

Let $U'\to U$ be finite \'etale, and surjective, set $Y':=Y\times_UU'$. Then we have a cartesian diagram (use Lemma~\ref{lm:basechg})
\[
\begin{CD}
D_{Y'} @>>> D_Y\\
@V{j_{Y'}}VV @V{j_Y}VV\\
\A^1_{U'} @>>>\A^1_U.
\end{CD}
\]
The top morphism is closed and surjective, since by Lemma~\ref{lm:basechg} it is a base change of $U'\to U$. It is also clear that the preimage of $Y$ under this morphism is $Y'$. Thus if we prove the lemma for $Y'\to U'$, we prove it for $Y\to U$. Now, applying Lemma~\ref{lm:decomp}, we can assume that $Y=\coprod_{i=1}^lY_i$, and for all $i$ the projection $Y_i$ to $U$ is an isomorphism.

It follows that for all $i$ there is $r_i\in R$ such that $Y_i$ is given by the ideal $(t-r_i)\subset R[t]$. Since for $i\ne j$ we have $Y_i\cap Y_j=\emptyset$, it follows that
\[
    (t-r_i,r_i-r_j)=(t-r_i,t-r_j)=(1),
\]
so that $r_i-r_j$ is invertible in $R$. Further, we can identify $D_Y$ with $\coprod_{i=1}^l D_U$, and one checks that the intersection of $j_Y^{-1}(Y)$ with the $i$-th component is given by the principal ideal
\[
    \left(\prod_{j=1}^l(t+r_i-r_j)\right)\subset R[[t]].
\]
However for $i\ne j$ we know that $r_i-r_j$ is invertible in $R$, so $t+r_i-r_j$ is invertible in $R[[t]]$, thus the above ideal coincides with $(t)$.
\end{proof}
The lemma implies the existence of the commutative diagram~\eqref{eq:fpqccover}.

\subsection{Gluing principal bundles} Recall that $\bG$ is an affine flat $U$-group scheme.
Let $\cE'$ be a $\bG$-bundle over $\P^1_U-Y$. Let $\hat\cE$ be a $\bG$-bundle over $D_Y$. Denote by $\cA(\cE',\hat\cE)$ the category of triples $(\cE,\tau,\sigma)$, where $\cE$ is a $\bG$-bundle over $\P^1_U$, $\tau:\cE'\to\cE|_{\P^1_U-Y}$ and $\sigma:\hat\cE\to\cE|_{D_Y}$ are isomorphisms (the restrictions are defined as the pullbacks via morphisms from diagram~\eqref{eq:fpqccover}). A morphism from $(\cE,\tau,\sigma)$ to $(\cE',\tau',\sigma')$ is an isomorphism $\cE\to\cE'$ compatible with isomorphisms $\tau$, $\tau'$, $\sigma$, and~$\sigma'$.

\begin{proposition}\label{pr:gluing}
Let $\Phi$ be the functor
\[
    \Phi:\cA(\cE',\hat\cE)\to\ISO(\hat\cE|_{\dot D_Y},\cE'|_{\dot D_Y}),
\]
sending $(\cE,\tau,\sigma)$ to $\left(\tau^{-1}|_{\dot D_Y}\right)\circ\left(\sigma|_{\dot D_Y}\right)$, where we view the right hand side as a~discrete category. Then $\Phi$ is an equivalence of categories. (Here $\ISO$ stands for the set of isomorphisms between principal bundles.)
\end{proposition}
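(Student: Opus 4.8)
The plan is to reduce the proposition to the formal gluing theorem of Beauville--Laszlo applied to the affine chart of $\P^1_U$ containing $Y$. I first note that $\Phi$ is a well-defined functor into the discrete category: the commutativity of diagram~\eqref{eq:fpqccover} canonically identifies the two a priori different restrictions of $\cE$ to $\dot D_Y$ (one through $\P^1_U-Y$, the other through $D_Y$), so that $\tau^{-1}|_{\dot D_Y}$ and $\sigma|_{\dot D_Y}$ are composable and $\Phi(\cE,\tau,\sigma)=\tau^{-1}|_{\dot D_Y}\circ\sigma|_{\dot D_Y}$ is an isomorphism $\hat\cE|_{\dot D_Y}\to\cE'|_{\dot D_Y}$; a morphism of triples restricts compatibly over $\P^1_U-Y$ and $D_Y$, hence is sent to the identity of $\psi:=\Phi(\cE,\tau,\sigma)$. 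To conclude it then suffices to prove that $\Phi$ is essentially surjective and fully faithful, for this forces every object to have trivial automorphism group and identifies isomorphism classes with the target set.

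I would next set up the affine picture. Since $Y\to U$ is finite \'etale and $U$ is connected, the coordinate ring $A=R[x]/I$ of $Y$ is a finite projective $R$-module of constant rank $d$; the characteristic polynomial $f$ of multiplication by $x$ is monic of degree $d$ and lies in $I$ by Cayley--Hamilton, and the resulting surjection $R[x]/(f)\to A$ of projective modules of equal constant rank $d$ is an isomorphism. Thus $I=(f)$ is principal and $f$ is a non-zero-divisor in $R[x]$. Dividing by the monic $f$ gives every element of $R[x]$ a unique $f$-adic expansion, which identifies $\widehat{R[x]}_{(f)}$ with $A[[t]]$ under $t\leftrightarrow f$; hence $D_Y=\spec A[[t]]$ is the $(f)$-adic completion of $\A^1_U=\spec R[x]$, while $\dot D_Y=\spec A((t))=\spec\widehat{R[x]}_{(f)}[1/f]$ and $\A^1_U-Y=\spec R[x]_f$. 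This is precisely the situation of Beauville--Laszlo, with ring $R[x]$ and non-zero-divisor $f$.

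For essential surjectivity, given $\psi\in\Iso(\hat\cE|_{\dot D_Y},\cE'|_{\dot D_Y})$, I would glue the $R[x]_f$-bundle $\cE'|_{\A^1_U-Y}$ and the $A[[t]]$-bundle $\hat\cE$ along $\psi$ into a bundle $\cE_V$ over $\A^1_U$. As $\bG$ is only assumed affine and flat, I cannot invoke \'etale-local triviality; instead I regard a $\bG$-bundle as the spectrum of a quasi-coherent sheaf of $\cO$-algebras carrying a $\bG$-coaction, and use that the Beauville--Laszlo equivalence is monoidal, so that it glues the algebra structures and coactions to a $\bG$-equivariant affine $\A^1_U$-scheme $\cE_V$. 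Faithful flatness of $\cE_V$ and the torsor condition (that the action morphism $\bG\times_U\cE_V\to\cE_V\times_{\A^1_U}\cE_V$ is an isomorphism) are detected on the gluing datum, where they hold by hypothesis on $\cE'$ and $\hat\cE$. Finally I glue $\cE_V$ with $\cE'$ over the genuine Zariski cover $\{\A^1_U,\ \P^1_U-Y\}$ of $\P^1_U$, whose overlap is the affine $\A^1_U-Y$, using the isomorphism $\cE_V|_{\A^1_U-Y}\cong\cE'|_{\A^1_U-Y}$ produced by Beauville--Laszlo; this yields $\cE$ on $\P^1_U$ together with the required $\tau$ and $\sigma$, and by construction $\Phi(\cE,\tau,\sigma)=\psi$.

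For full faithfulness, consider triples $(\cE_1,\tau_1,\sigma_1)$ and $(\cE_2,\tau_2,\sigma_2)$. A compatible isomorphism $g$ must satisfy $g\circ\tau_1=\tau_2$ over $\P^1_U-Y$ and $g\circ\sigma_1=\sigma_2$ over $D_Y$, so it equals $\tau_2\tau_1^{-1}$ on the former and $\sigma_2\sigma_1^{-1}$ on the latter; these agree on $\dot D_Y$ exactly when $\tau_1^{-1}\sigma_1=\tau_2^{-1}\sigma_2$ there, that is, when $\Phi(\cE_1,\tau_1,\sigma_1)=\Phi(\cE_2,\tau_2,\sigma_2)$. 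When they agree, the uniqueness half of the Beauville--Laszlo equivalence glues $\tau_2\tau_1^{-1}$ and $\sigma_2\sigma_1^{-1}$ over $\A^1_U$, and then with $\tau_2\tau_1^{-1}$ over $\P^1_U-Y$, to a unique compatible $g$; when they disagree no compatible morphism exists, matching the empty $\Hom$-set in the discrete target. Hence $\Phi$ is fully faithful, and combined with essential surjectivity it is an equivalence onto the discrete category. I expect the main obstacle to be precisely the upgrade of Beauville--Laszlo from modules to principal $\bG$-bundles: one must verify that the monoidal gluing faithfully transports the algebra and coaction data and that the glued algebra is faithfully flat and defines an honest torsor, all without the convenience of smoothness of $\bG$.
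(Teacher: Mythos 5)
Your proof is correct in outline and ultimately runs on the same engine as the paper's --- the Beauville--Laszlo gluing for a ring with a non-zero-divisor, upgraded to affine schemes and then to torsors via compatibility with tensor products --- but it reaches that engine by a genuinely different reduction. The paper first makes the statement \'etale-local on $U$, uses Lemma~\ref{lm:decomp} to split $Y$ into sections $Y_i\cong U$ cut out by linear polynomials $t-r_i$, and then inducts on the number of components, applying Beauville--Laszlo once per component to the pair $(R[t,g^{-1}],\,t-r_1)$. You instead observe that, $Y$ being finite and \'etale over the connected $U$, its ideal in $R[x]$ is generated by a single monic polynomial $f$ (your Cayley--Hamilton argument is fine: the surjection $R[x]/(f)\to A$ of projective $R$-modules of equal constant rank is an isomorphism), and you apply Beauville--Laszlo once to $(R[x],f)$. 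This buys a cleaner proof: no \'etale descent back down $U'\to U$, no induction. Your faithfulness/fullness argument via the uniqueness half of Beauville--Laszlo also works; the paper gets faithfulness more cheaply from the density of $\P^1_U-Y$ in $\P^1_U$.

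The one step you under-justify is the identification $\widehat{R[x]}_{(f)}\cong A[[t]]$. Division with remainder by the monic $f$ only identifies the completion with $\prod_{i\ge0}A\cdot f^i$ as an $R$-module; it does not by itself give the ring isomorphism, because the product of two remainders creates carries into higher $f$-degrees. To produce the copy of $A$ inside $\widehat{R[x]}_{(f)}$ you must use that $A$ is \emph{\'etale} over $R$: formal \'etaleness gives a unique $R$-algebra lift $A\to R[x]/(f^n)$ for each $n$, and then $A[[t]]\to\widehat{R[x]}_{(f)}$, $t\mapsto f$, is an isomorphism by the associated-graded argument (using that $f$ is a non-zero-divisor). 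Without \'etaleness the claim is false (e.g.\ $R=k$, $f=x^2$). Finally, to match the proposition as stated you should check that this completion map agrees with the paper's $j_Y$ up to an automorphism of $D_Y$ over $Y$; this holds because $j_Y^*f=f'(\bar x)\,t+O(t^2)$ with $f'(\bar x)$ a unit in $A$, again by \'etaleness, so the two formulations of the gluing datum are equivalent.
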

\begin{proof}
This essentially follows from the results of~\cite{BeauvilleLaszlo} but we give some details. First of all, it is easy to see that for any two objects of $\cA$ there is at most one morphism from the first to the second. Indeed, any two such morphisms coincide on $\P_U^1-Y$ but $\P_U^1-Y$ is dense in $\P_U^1$. Thus $\Phi$ is faithful.

Next, since $\bG$-bundles and their isomorphisms can be glued in \'etale topology, the statement is \'etale local over $U$. Thus by Lemma~\ref{lm:decomp} we can assume that $Y=\coprod_{i=1}^l Y_i$ is such that for each $i$ the composition $Y_i\hookrightarrow Y\to U$ is an isomorphism. As in the proof of Lemma~\ref{lm:preimage}, we see that $Y_i$ is given by a principal ideal $(t-r_i)\subset R[t]$, $r_i\in R$.

We give a proof by induction on $l$. If $l=0$, then $Y=\emptyset$ and there is nothing to prove. Let us prove the step of induction. Let us prove that the functor $\Phi$ is essentially surjective, the fullness is proved similarly. Let $\phi\in\ISO(\hat\cE|_{\dot D_Y},\cE'|_{\dot D_Y})$.

Set $f=t-r_1$, $g=\prod_{i=2}^l(t-r_i)$. Set $Y':=\coprod_{i=2}^l Y_i$. Then
\[
    \A_U^1-Y=\spec R[t,g^{-1},f^{-1}],\qquad \A_U^1-Y'=\spec R[t,g^{-1}].
\]
Since $(t-r_1,g)$ is the unit ideal, the morphism
\[
    R[t]/t^n\to R[t,g^{-1}]/(t-r_1)^n,\qquad a(t)\mapsto a(t-r_1)
\]
is easily seen to be an isomorphism for all $n$. Passing to the limit we get an isomorphism $R[[t]]\to\widehat{R[t,g^{-1}]}$, where $\widehat{R[t,g^{-1}]}$ is the completion of $R[t,g^{-1}]$ with respect to $f$-adic topology. Now we see that the diagram
\begin{equation}\label{CD:BL}
\begin{CD}
\widehat{R[t,g^{-1}]} @>>> (\widehat{R[t,g^{-1}]})[f^{-1}]\\
@AAA @AAA \\
R[t,g^{-1}] @>>> R[t,g^{-1},f^{-1}]
\end{CD}
\end{equation}
gives rise to a diagram of morphisms of affine schemes
\begin{equation*}
\begin{CD}
D_{Y_1} @<<< \dot D_{Y_1}\\
@VVV @VVV\\
\A_U^1-Y' @<<< \A_U^1-Y
\end{CD}
\end{equation*}
and one checks that this diagram is an open subdiagram of~(\ref{eq:fpqccover}) in the obvious sense. On the other hand, diagram~\eqref{CD:BL} is an example of the diagram on page~4 of~\cite{BeauvilleLaszlo}. It follows that we can glue a flat quasi-coherent sheaf on $\A^1_U-Y$ with a flat quasi-coherent sheaf on $D_{Y_1}$, provided we are given an isomorphism of restrictions of the sheaves to $\dot D_{Y_1}$ (and we get a sheaf on $\A^1_U-Y'$). This gluing is compatible with tensor product, thus one can, in fact, glue flat affine schemes. Since the gluing of affine schemes is compatible with the product, one can glue principal bundles as well.

Let $\cE_1$ be the bundle obtained by gluing $\cE'|_{\A_U^1-Y}$ with $\hat\cE|_{D_{Y_1}}$ via $\phi|_{\dot D_{Y_1}}$. We have an isomorphism of $\cE_1$ and $\cE'$ on $\A_U^1-Y$, so we can glue them (in Zariski topology). We obtain a $\bG$-bundle $\cE'_1$ over $\P_U^1-Y'$.

It remains to apply the induction step to $\cE'_1$, $\hat\cE|_{Y'}$ and $\phi|_{Y'}$.
\end{proof}

\section{Affine Grassmannians for group schemes}\label{sect:Grassm}
This is the main section of the paper. We define affine Grassmannians for smooth affine group schemes over affine schemes. In the case, when the scheme is over a field~$k$ and the group scheme is semi-simple, we establish this affine Grassmannians as a twist of the affine Grassmannian for a $k$-group by a principal bundle, see Proposition~\ref{pr:twistbytorsor}. In this case we show that the affine Grassmannian is an ind-projective scheme and develop its Bruhat decomposition. Propositions~\ref{pr:twistedcells} and~\ref{Pr:TwistedBB} are crucial for the proof of Theorem~\ref{ThMainb}.

Affine Grassmannians were mostly studied over the field of complex numbers. Notable exceptions are~\cite{FaltingsLoopGroups} and~\cite{PappasRapoportLoopGroups}. The `family version' of affine Grassmannians was considered in~\cite[Sect.~3]{HeinlothUniformization}. However, it seems that the Bruhat decomposition was not known before in the generality needed.

Two nice reviews of affine Grassmannians are found in~\cite{SorgerLecturesBundles} (over complex numbers) and~\cite{GoertzGrassmannians}.
Below we are using the notation of Section~\ref{sect:gluing} and diagram~\eqref{eq:fpqccover}.

\subsection{Generalities on affine Grassmannians}
Let $U=\spec R$ be a connected affine scheme and $\Aff/U$ be the (big) \'etale site of affine schemes over $U$. Recall that a \emph{$U$-space} is a sheaf of sets on $\Et/U$. We can equivalently view it as a sheaf on $\Aff/U$ (see~\cite[Exp.~VII, Prop.~3.1]{SGA4-2}). Let $\bG$ be a smooth affine $U$-group scheme. Our main object of study is the \emph{affine Grassmannian\/} $Gr_\bG$. It is defined as the sheafification of the presheaf, sending an affine $U$-scheme $T$ to the set $\bG(\dot D_T)/\bG(D_T)$. (The morphism $\dot D_T\to D_T$ induces a morphism $\bG(D_T)\to\bG(\dot D_T)$. It is obvious that this morphism is injective and we identify $\bG(D_T)$ with its image.)

As in the previous section, let $Y$ be a finite and \'etale over $U$ subscheme of $\A^1_U$ (automatically closed). Assume also that $Y\ne\emptyset$, then the projection $Y\to U$ is surjective. Let $\Psi$ be the functor, sending a $U$-scheme $T$ to the set of isomorphism classes of pairs $(\cE,\tau)$, where $\cE$ is a $\bG$-bundle over $\P^1_T$, $\tau$ is its trivialization on $\P^1_T-(Y\times_UT)$.

\begin{proposition}\label{Pr:ModuliTwistedGr}
The functor $\Psi$ is canonically isomorphic to the functor sending a $U$-scheme $T$ to $Gr_\bG(Y\times_UT)$.
\end{proposition}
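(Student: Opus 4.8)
The plan is to construct, for every $U$-scheme $T$, a natural bijection between $\Psi(T)$ and $Gr_\bG(Y_T)$, where $Y_T:=Y\times_U T$, obtained by restricting a bundle to the formal disc and gluing it back via Proposition~\ref{pr:gluing} and diagram~\eqref{eq:fpqccover} (with $T$ in place of $U$). It suffices to do this for affine $T$, in which case $Y_T$ is again affine (being finite over $T$), since both functors are sheaves on $\Aff/U$: for $\Psi$ this is \'etale descent for bundles-with-trivialization, using that a pair $(\cE,\tau)$ is rigid---an automorphism of $\cE$ that is the identity on the schematically dense open $\P^1_T-Y_T$ is the identity, exactly as in the faithfulness argument of Proposition~\ref{pr:gluing}.

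The first step is to describe $Gr_\bG$ concretely. For an affine $U$-scheme $S=\spec B$ let $Q(S)$ be the set of isomorphism classes of pairs $(\hat\cE,\beta)$, where $\hat\cE$ is a $\bG$-bundle on $D_S=\spec B[[t]]$ and $\beta$ is a trivialization of $\hat\cE|_{\dot D_S}$. Such pairs are rigid---an automorphism of $\hat\cE$ fixing $\beta$ restricts to the identity on $\dot D_S$, hence is the identity because $B[[t]]\hookrightarrow B((t))$---so $Q$ is a sheaf on $\Aff/U$. There is a natural map from the presheaf $P\colon S\mapsto\bG(\dot D_S)/\bG(D_S)$ to $Q$, sending $g$ to the trivial bundle on $D_S$ equipped with the trivialization $g$ on $\dot D_S$; it is injective on sections (two elements give isomorphic pairs iff they differ by $\bG(D_S)$) and a local epimorphism: given $(\hat\cE,\beta)$, the bundle $\hat\cE$ is trivial as soon as its restriction to $\{t=0\}=S$ is, by successively lifting a trivialization along the thickenings $\spec B[t]/t^{n}\hookrightarrow\spec B[t]/t^{n+1}$ (formal smoothness of $\bG$) and using $\bG(D_S)=\varprojlim_n\bG(B[t]/t^{n})$, while the restriction to $\{t=0\}$ is \'etale-locally trivial since $\bG$ is smooth. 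Hence $P\to Q$ induces an isomorphism on sheafifications and $Gr_\bG\approx Q$. This step, matching the abstract quotient $\bG(\dot D)/\bG(D)$ with bundles on the formal disc carrying a trivialization on the punctured disc, is the main technical point of the proof.

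With this description, I apply the first step to the affine scheme $S=Y_T$ and produce the bijection by gluing. In the forward direction I send $(\cE,\tau)\in\Psi(T)$ to $\bigl(\cE|_{D_{Y_T}},\,\tau|_{\dot D_{Y_T}}\bigr)\in Gr_\bG(Y_T)$, the restrictions being taken along diagram~\eqref{eq:fpqccover} over $T$; this pair is precisely $\Phi(\cE,\tau,\Id)$ in the notation of Proposition~\ref{pr:gluing}. In the backward direction, given $(\hat\cE,\beta)\in Gr_\bG(Y_T)$, I take $\cE'$ to be the trivial bundle on $\P^1_T-Y_T$, so that $\beta\in\Iso(\hat\cE|_{\dot D_{Y_T}},\cE'|_{\dot D_{Y_T}})$; Proposition~\ref{pr:gluing}, applied over $T$, furnishes a triple $(\cE,\tau,\sigma)\in\cA(\cE',\hat\cE)$ with $\Phi(\cE,\tau,\sigma)=\beta$, unique up to unique isomorphism, and I output $(\cE,\tau)$ with $\tau$ inherited from the trivial $\cE'$. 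These two maps are mutually inverse precisely because $\Phi$ is an equivalence onto a discrete category: $\sigma$ identifies $\hat\cE$ with $\cE|_{D_{Y_T}}$ compatibly with $\beta$, and essential uniqueness recovers $(\cE,\tau)$ up to isomorphism from its image. The triviality of $\cE'$ is exactly what lets a bare trivialization $\beta$ serve as the gluing datum.

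It remains to check naturality in $T$. For a morphism $T'\to T$ all the constructions---the complement $\P^1_T-Y_T$, the discs $D_{Y_T}$ and $\dot D_{Y_T}$, and the gluing of Proposition~\ref{pr:gluing}---are stable under the base change $-\times_T T'$, since $Y_{T'}=Y_T\times_T T'$ and $D_{Y_{T'}}=D_{Y_T}\times_T T'$; hence both the restriction-to-the-disc map and the gluing map commute with pullback, and the bijections assemble into a canonical isomorphism of functors $\Psi\approx Gr_\bG(Y\times_U-)$. (It is enough to treat affine $T$, as both sides are sheaves on $\Aff/U$ and are therefore determined by their values on affine schemes.) I expect the only genuine work to lie in the first step; once $Gr_\bG$ is identified with $Q$, the rest is a formal consequence of Proposition~\ref{pr:gluing}.
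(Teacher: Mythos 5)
Your overall strategy is essentially the paper's: trivialize the bundle on the formal disc after an \'etale cover using smoothness of $\bG$ (your successive-lifting argument along $\spec B[t]/t^n\hookrightarrow\spec B[t]/t^{n+1}$ is exactly the paper's inner lemma), then use Proposition~\ref{pr:gluing} to pass between bundles on $\P^1_T$ trivialized off $Y\times_UT$ and gluing data on the punctured disc. The difference is organizational: you route everything through the intermediate functor $Q$ of pairs $(\hat\cE,\beta)$ on the formal disc and assert $Gr_\bG\approx Q$, whereas the paper constructs $\delta_T:\Psi(T)\to Gr_\bG(Y\times_UT)$ directly by producing a section of the presheaf $P:S\mapsto\bG(\dot D_S)/\bG(D_S)$ over a cover and letting it descend inside the sheafification.

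That reorganization is where the gap sits: you have not justified that $Q$ is a sheaf. Rigidity of the pairs $(\hat\cE,\beta)$ only shows that $Q$ is separated and that descent data admit no nontrivial automorphisms; it does not show that descent data are \emph{effective}. Effectivity here is genuinely delicate, because for an \'etale cover $S'=\spec B'\to S=\spec B$ the relevant morphism is $\spec B'[[t]]\to\spec B[[t]]$, and $B'[[t]]$ is not $B'\otimes_BB[[t]]$ unless $B'$ is finite over $B$ (this is precisely why Lemma~\ref{lm:basechg} requires finite presentation as a module); likewise $B'[[t]]\otimes_{B[[t]]}B'[[t]]$ differs from $(B'\otimes_BB')[[t]]$. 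So descending a $\bG$-bundle from $D_{S'}$ to $D_S$ is not an instance of ordinary \'etale or fppf descent, and the standard way to prove it is to use $\beta'$ and Beauville--Laszlo to transport the datum to an honest scheme, descend there, and restrict back --- i.e., a substantial part of the very proposition you are proving. Without sheafiness of $Q$, your first step only gives that $Gr_\bG$ is the sheafification of $Q$, hence only an injection $\Psi(T)\hookrightarrow Gr_\bG(Y\times_UT)$; surjectivity is exactly what is left open. (A smaller slip of the same nature: $D_{Y_{T'}}\ne D_{Y_T}\times_TT'$ for general $T'\to T$, so naturality should be phrased via the canonical morphisms $D_{Y_{T'}}\to D_{Y_T}$ rather than via base change.) The paper sidesteps all of this by never introducing $Q$: it verifies the cocycle condition for sections of the presheaf $P$ over a cover, where descent is automatic because $Gr_\bG$ is a sheafification by definition, and proves injectivity and surjectivity of $\delta_T$ directly from Proposition~\ref{pr:gluing}.
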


\begin{proof}
The proof is similar to that of~\cite[Prop.~5.3.1]{SorgerLecturesBundles}. Let $\cE$ be a $\bG$-bundle over $\P^1_T$, $\tau$ be its trivialization on $\P^1_T-(Y\times_UT)$. We need to construct a $U$-morphism from $Y\times_UT$ to $Gr_\bG$. \emph{We claim, that there is a surjective \'etale affine morphism $\phi:T'\to T$ such that the restriction of $\phi^*\cE$ to $D_{Y\times_UT'}$ is trivial.} Indeed, using Lemma~\ref{lm:decomp} and the fact that every $\bG$-bundle is \'etale locally trivial, we can choose $\phi$ such that $T'$ is affine and the restriction of $\phi^*\cE$ to $Y\times_UT'$ is trivial, that is, this restriction has a section. It remains to use
\begin{lemma}
Let $B$ be a smooth $A[[t]]$-algebra, where $A$ is a commutative ring. If there is an $A[[t]]$-homomorphism $B\to A$, then there is an $A[[t]]$-homomorphism $B\to A[[t]]$.
\end{lemma}
\begin{proof}
Denote the $A[[t]]$-homomorphism $B\to A$ by $\theta_1$. Using smoothness we can successively for all $n$ construct an $A[[t]]$-homomorphism $\theta_n:B\to A[[t]]/t^n$ such that $\theta_n\bmod t^{n-1}=\theta_{n-1}$ for all $n\ge2$. The homomorphisms $\theta_n$ give rise to an $A[[t]]$-homomorphism $\theta:B\to A[[t]]$.
\end{proof}

Let us continue with the proof of the proposition. Choose $\phi$ as above, and let~$\sigma$ be a trivialization of the restriction of $\phi^*\cE$ to $D_{Y\times_UT'}$. Then, in the notation of Proposition~\ref{pr:gluing}, $(\phi^*\cE,\phi^*\tau,\sigma)\in\cA(\cE',\hat\cE)$, where $\cE'$ and $\hat\cE$ are trivial bundles over
$\P^1_{T'}-(Y\times_UT')$ and $D_{Y\times_UT'}$ respectively. Set
\[
    \tilde\alpha:=\Phi(\phi^*\cE,\phi^*\tau,\sigma)\in\ISO(\hat\cE|_{\dot D_{Y\times_UT'}},\cE'|_{\dot D_{Y\times_UT'}})=
        \bG(\dot D_{Y\times_UT'}).
\]
Let $\alpha$ be the projection of $\tilde\alpha$ to $\bG(\dot D_{Y\times_UT'})/\bG(D_{Y\times_UT'})$, it gives rise to a $U$-morphism from $Y\times_UT'$ to $Gr_\bG$ (also denoted by $\alpha$).

\emph{We claim that $\alpha$ does not depend on a choice of $\sigma$.} Indeed, if $\sigma_1$ is a different choice of a trivialization of $\phi^*\cE$ on $D_{Y\times_UT'}$, then $\sigma_1=\sigma\circ\mu$, where $\mu\in\bG(D_{Y\times_UT'})$. But then it is easy to see that
\[
    \tilde\alpha_1:=\Phi(\phi^*\cE,\phi^*\tau,\sigma_1)=\tilde\alpha\circ\mu,
\]
so that the projection of $\tilde\alpha_1$ to $\bG(\dot D_{Y\times_UT'})/\bG(D_{Y\times_UT'})$ coincides with $\alpha$.

Now it is obvious that $\alpha$ satisfies the descent condition for the \'etale morphism $Id_Y\times_U\phi$, so it descends to a morphism from $Y\times_UT$ to $Gr_\bG$ by definition of sheafification. A standard argument shows that this morphism does not depend on the choice of $\phi$.

We have constructed a map
\[
    \delta_T:\Psi(T)\to Gr_\bG(Y\times_UT).
\]
It is straightforward to show that these maps are compatible with base changes $T'\to T$ so we get a canonical transformation $\delta$ from $\Psi$ to the functor $(T\mapsto Gr_\bG(Y\times_UT))$. It remains to show that this transformation is an isomorphism of functors, that is, for every affine $U$-scheme $T$ the map $\delta_T$ is a bijection. We will now prove the injectivity of $\delta_T$.

Assume that $\cE_1$ and $\cE_2$ are $\bG$-bundles over $\P^1_T$, $\tau_1$ and $\tau_2$ are their trivializations on $\P^1_T-(Y\times_UT)$. If $\delta_T(\cE_1,\tau_1)=\delta_T(\cE_2,\tau_2)$, then there are a surjective \'etale affine morphism $\phi:T'\to T$ and trivializations $\sigma_1$ and $\sigma_2$ of $\phi^*\cE_1$ and $\phi^*\cE_2$ on $D_{Y\times_UT'}$ such that $\Phi(\phi^*\cE_1,\phi^*\tau_1,\sigma_1)$ and $\Phi(\phi^*\cE_2,\phi^*\tau_2,\sigma_2)$ differ by an element $\mu$ of $\bG(D_{Y\times_UT'})$:
\[
    \Phi(\phi^*\cE_1,\phi^*\tau_1,\sigma_1)=\Phi(\phi^*\cE_2,\phi^*\tau_2,\sigma_2)\circ\mu.
\]
But then we have
\[
    \Phi(\phi^*\cE_1,\phi^*\tau_1,\sigma_1)=\Phi(\phi^*\cE_2,\phi^*\tau_2,\sigma_2\circ\mu).
\]
Now by Proposition~\ref{pr:gluing} there is an isomorphism $\nu:\phi^*\cE_1\to\phi^*\cE_2$ taking trivialization $\phi^*\tau_1$ to $\phi^*\tau_2$. It remains to check that $\nu$ descends to an isomorphism $\cE_1\to\cE_2$. Let $p_1$ and $p_2$ be two projections from $T'\times_TT'$ to $T'$, $q:T'\times_TT'\to T$ be the canonical morphism. We only need to check that $p_1^*\nu=p_2^*\nu$. However, both isomorphisms take $q^*\tau_1$ to $q^*\tau_2$, thus they coincide. The injectivity of $\delta_T$ is proved.

Let us prove the surjectivity of $\delta_T$. Let $\alpha\in Gr_\bG(Y\times_UT)$. By the definition of the Grassmannian, there is a surjective \'etale affine morphism $T'\to T$ such that the pullback of $\alpha$ to $Y\times_UT'$ lifts to $\tilde\alpha\in\bG(\dot D_{Y\times_UT'})$. By Proposition~\ref{pr:gluing}, there is a $\bG$-bundle $\tilde\cE$ over $\P^1_{T'}$, a trivialization $\tilde\tau$ of $\tilde\cE$ on $\P^1_{T'}-(Y\times_UT')$, and a trivialization $\tilde\sigma$ of $\tilde\cE$ on $D_{Y\times_UT'}$ such that $\Phi(\tilde\cE,\tilde\tau,\tilde\sigma)=\tilde\alpha$. Then $(\tilde\cE,\tilde\tau)\in\Psi(T')$, and, using the descent condition for $\tilde\alpha$ and Proposition~\ref{pr:gluing}, one shows that the pair $(\tilde\cE,\tilde\tau)$ descents to $(\cE,\tau)\in\Psi(T)$. It is clear that $\delta_T(\cE,\tau)=\alpha$. We see that $\delta$ is an isomorphism of functors, which completes the proof of the proposition.
\end{proof}

\begin{corollary}\label{Cor:ModuliTwistedGr}
The $U$-space $Gr_\bG$ represents the functor, sending a $U$-scheme $T$ to the set of isomorphism classes of pairs $(\cE,\tau)$, where $\cE$ is a $\bG$-bundle over $\P^1_T$, $\tau$ is a trivialization of $\cE$ on $\A^1_T$.
\end{corollary}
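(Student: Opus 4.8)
The plan is to obtain this as a special case of Proposition~\ref{Pr:ModuliTwistedGr}, taking $Y$ to be a section of the projection $\P^1_U\to U$. Concretely, I would first apply the proposition to the zero section $Y=\{0\}\times U\subset\A^1_U$. This $Y$ is nonempty, closed, and finite and \'etale over $U$ (the projection $Y\to U$ is even an isomorphism), so it is an admissible choice. Because $Y\to U$ is an isomorphism, base change gives a canonical isomorphism $Y\times_U T\cong T$ for every $U$-scheme $T$, and hence a canonical identification $Gr_\bG(Y\times_U T)\cong Gr_\bG(T)$. Feeding this into the proposition yields a canonical isomorphism between $Gr_\bG(T)$ and $\Psi(T)$, the set of isomorphism classes of pairs $(\cE,\tau)$ where $\cE$ is a $\bG$-bundle over $\P^1_T$ and $\tau$ is a trivialization of $\cE$ on $\P^1_T-(\{0\}\times T)$.

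It remains to replace trivializations on the complement of the zero section by trivializations on $\A^1_T=\P^1_T-(\{\infty\}\times T)$. For this I would use the involution $\iota\colon\P^1_U\to\P^1_U$ given in the affine coordinate by $t\mapsto 1/t$; homogeneously it is $[x:y]\mapsto[y:x]$. This $\iota$ interchanges the zero and infinity sections and, since it is an involution, restricts to an isomorphism $\P^1_T-(\{0\}\times T)\cong\A^1_T$. Pulling back along $\iota$ sends a pair $(\cE,\tau)$ trivialized on $\P^1_T-(\{0\}\times T)$ to the pair $(\iota^*\cE,\iota^*\tau)$ with $\iota^*\tau$ a trivialization on $\A^1_T$, and this operation is its own inverse on isomorphism classes. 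Composing this natural isomorphism with the one from the previous paragraph exhibits $Gr_\bG$ as representing the desired functor $T\mapsto\{(\cE,\tau):\cE \text{ a }\bG\text{-bundle over }\P^1_T,\ \tau\text{ a trivialization on }\A^1_T\}$.

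Alternatively, one can bypass $\iota$ by applying the proposition directly to the infinity section $Y=\{\infty\}\times U$: all the constructions of Section~\ref{sect:gluing} and of Proposition~\ref{Pr:ModuliTwistedGr} carry over upon replacing the coordinate $t$ by $s=1/t$ near $Y$, and then $\P^1_T-(Y\times_U T)$ is literally $\A^1_T$. Either way, the only point demanding genuine care, and thus the main (if modest) obstacle, is checking that every identification above is natural in $T$ and respects isomorphism classes: that $Y\times_U T\cong T$ is compatible with the transition data defining $Gr_\bG$, and that $\iota^*$ commutes with base change $T'\to T$. These are routine functorial verifications, and no input beyond Proposition~\ref{Pr:ModuliTwistedGr} is required.
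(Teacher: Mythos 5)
Your proposal is correct and is essentially the paper's own argument: the paper simply says ``take $Y=\infty\times U$ in Proposition~\ref{Pr:ModuliTwistedGr},'' which is exactly your alternative route, while your primary route via $Y=\{0\}\times U$ and the involution $t\mapsto 1/t$ is an equivalent reformulation. If anything, your version is slightly more scrupulous, since the proposition as stated requires $Y\subset\A^1_U$, so the coordinate change near $\infty$ that the paper leaves implicit is made explicit by your use of $\iota$.
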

\begin{proof}
Take $Y=\infty\times U$ in the proposition.
\end{proof}

\subsection{Affine Grassmannians for semi-simple group schemes as twists}\label{sect:twists}
Recall that $U$ is a connected affine scheme. Now we assume that $\bG$ is a semi-simple $U$-group scheme. Let $u\in U$ be a point. Recall that the \emph{type} of $\bG$ is the isomorphism class of a root datum of $\bG_{\bar u}$, where $\bar u$ is an algebraic closure of $u$ (cf.\ Definitions~2.6.1 and~2.7 of~\cite[Exp.~XXII]{SGA3-3}). By~\cite[Exp.~XXII, Prop.~2.8]{SGA3-3} the type does not depend on the choice of $u$. Let $\bG_0$ be the split semi-simple $U$-group scheme of the same type as $\bG$ (existing by~\cite[Exp.~XXIII, Cor.~5.9]{SGA3-3}, unique up to isomorphism by~\cite[Exp.~XXIII, Cor.~5.3]{SGA3-3}). Now assume that $U$ is a $k$-scheme, where $k$ is a field. Then $\bG_0\approx G\times_kU$, where $G$ is a split $k$-group (cf.~\cite[Exp.~XXIII, Def.~5.11]{SGA3-3} and the discussion thereafter).

Let $\Aut(G):=\Aut_{k-gr}(G)$ be the scheme of $k$-group automorphisms of $G$ (cf.~\cite[Exp.~XXIV, Thm~1.3]{SGA3-3}).
Then $\bG$, as a form of $G\times_kU$, corresponds to the right principal $\Aut(G)$-bundle
\[
    \cT:=\Iso_{U-gr}(G\times_kU,\bG)
\]
over $U$; cf.~\cite[Exp.~XXIV, Cor.~1.17]{SGA3-3}. (Here $\Iso_{U-gr}(G\times_kU,\bG)$ is the scheme of isomorphisms of group schemes.)

On the other hand, $\Aut(G)$ acts on the $k$-space $Gr_G$.

\begin{proposition}\label{pr:twistbytorsor}
There is a canonical isomorphism of $U$-spaces
\[
    Gr_\bG\simeq\cT\times^{\Aut(G)}Gr_G.
\]
\end{proposition}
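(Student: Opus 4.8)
The plan is to construct the isomorphism $Gr_\bG \simeq \cT \times^{\Aut(G)} Gr_G$ by comparing the functors that both sides represent. By Corollary~\ref{Cor:ModuliTwistedGr}, the left-hand side represents the functor sending a $U$-scheme $T$ to isomorphism classes of pairs $(\cE,\tau)$, where $\cE$ is a $\bG$-bundle over $\P^1_T$ and $\tau$ is a trivialization of $\cE$ on $\A^1_T$. The right-hand side is an associated space, which by the discussion in Section~\ref{sect:associated} is the sheafification of the presheaf $T \mapsto (\cT(T) \times Gr_G(T))/\!\sim$. So the first step is to unwind what a $T$-point of $\cT \times^{\Aut(G)} Gr_G$ is: locally in the \'etale topology it is given by a pair $(s, g)$, where $s \in \cT(T)$ is an isomorphism $G \times_k U \to \bG$ of group schemes over some \'etale cover, and $g$ represents a $G$-modification datum, i.e.\ (again by Corollary~\ref{Cor:ModuliTwistedGr}) a pair $(\cE_0, \tau_0)$ of a $G$-bundle on $\P^1_T$ with trivialization on $\A^1_T$, subject to the equivalence $(sh, g) \sim (s, h\cdot g)$ for $h \in \Aut(G)(T)$.

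Next I would define the comparison map in the natural direction. Given a local datum $(s, (\cE_0, \tau_0))$ as above, the group-scheme isomorphism $s : G\times_k U \xrightarrow{\sim} \bG$ lets us push forward the $G$-bundle $\cE_0$ on $\P^1_T$ to a $\bG$-bundle $s_*\cE_0$, and correspondingly transports the $G$-trivialization $\tau_0$ on $\A^1_T$ to a $\bG$-trivialization $s_*\tau_0$. This produces a pair $(s_*\cE_0, s_*\tau_0)$, hence by Corollary~\ref{Cor:ModuliTwistedGr} a $T$-point of $Gr_\bG$. The key compatibility to verify is that this assignment respects the equivalence relation $\sim$: replacing $(s,(\cE_0,\tau_0))$ by $(sh, (\cE_0,\tau_0))$ on the $\cT$-factor versus acting by $h \in \Aut(G)(T)$ on the $Gr_G$-factor yields canonically isomorphic pairs, because the automorphism $h$ of $G$ intertwines the bundle $h_*\cE_0$ with $\cE_0$ via precisely the identification needed so that $(sh)_*\cE_0 \approx s_*(h_*\cE_0)$. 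Once this compatibility holds on the presheaf level, the universal property of sheafification extends the assignment to a well-defined morphism $\cT \times^{\Aut(G)} Gr_G \to Gr_\bG$ of $U$-spaces.

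Then I would exhibit the inverse. The cleanest way is to work \'etale-locally on $U$, where the $\Aut(G)$-bundle $\cT$ is trivial and we may fix an isomorphism $s_0 : G\times_k U \xrightarrow{\sim} \bG$. Over such an open, $s_0$ identifies $\bG$-bundles (with trivialization on $\A^1$) with $G$-bundles (with trivialization on $\A^1$), hence $Gr_\bG \simeq Gr_G \simeq \cT \times^{\Aut(G)} Gr_G$, the last isomorphism because a trivialized $\Aut(G)$-bundle identifies the associated space with its fiber. The substance is to check that as we vary the trivialization $s_0$ on overlaps of an \'etale cover by the transition cocycle valued in $\Aut(G)$, these local identifications are glued precisely by the twisting encoded in $\cT \times^{\Aut(G)} Gr_G$; this is exactly the defining property of the associated-space construction, so the local isomorphisms descend to a global inverse. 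Since our map and this descended inverse are mutually inverse \'etale-locally, and both sides are \'etale sheaves, they are mutually inverse globally.

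The main obstacle I expect is the careful bookkeeping of the equivalence relation $\sim$ under the functor $s_* = \iso_{U\text{-}gr}$-transport of bundles, that is, verifying that pushing forward along $s$ and acting on $Gr_G$ by $\Aut(G)$ are genuinely compatible and not merely compatible up to a non-canonical isomorphism; naturality here is what makes the presheaf map well defined before sheafification. A secondary technical point is that $Gr_\bG$, $Gr_G$, and the associated space are all defined only after sheafification, so every construction must either be visibly functorial (and thus descend automatically) or be checked to satisfy the \'etale descent condition by hand, exactly as was done for the morphism $\alpha$ in the proof of Proposition~\ref{Pr:ModuliTwistedGr}. I would handle this by phrasing the whole comparison as an isomorphism of the representing moduli functors from Corollary~\ref{Cor:ModuliTwistedGr}, where the transport of bundles along $s \in \cT$ is manifestly functorial, so that descent and independence of \'etale cover follow formally.
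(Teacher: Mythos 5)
Your proof is correct and in essence coincides with the paper's: both reduce to an \'etale cover $T\to U$ trivializing $\cT$, identify each side with $Gr_G\times_kT$ over the cover, and descend by checking that replacing the trivialization by $\tau\sigma$ with $\sigma\in(\Aut(G))(T)$ changes nothing. Your additional presheaf-level construction of the forward map via the moduli description of Corollary~\ref{Cor:ModuliTwistedGr} is a harmless elaboration that the paper omits.
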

\begin{proof}
Let $T\to U$ be an \'etale cover trivializing the torsor $\cT$ such that $T$ is affine, choose a trivialization $\tau:G\times_kT\xrightarrow{\approx}\cT\times_UT$. Then we have
\[
    Gr_\bG\times_UT\simeq Gr_{\bG\times_UT}\simeq Gr_{G\times_kT}\simeq Gr_G\times_kT\simeq(\cT\times^{\Aut(G)}Gr_G)\times_UT.
\]
(Here the second and the last isomorphisms depend on $\tau$, while the remaining are canonical.) One checks that the resulting isomorphism does not change if we replace~$\tau$ by $\tau\sigma$, where $\sigma\in(\Aut(G))(T)$. It follows that this isomorphism descends via the cover $T\to U$.
\end{proof}

\subsection{Stratifications of ind-schemes}
The notions defined here will be used below extensively. An \emph{ind-scheme\/} over an affine scheme $U$ is a $U$-space that is a filtered direct limit of $U$-schemes in the category of $U$-spaces.
An ind-scheme $X$ over $U$ is of \emph{ind-finite type} if it can be presented as $\lim\limits_{\longrightarrow} X_i$, where $X_i$ are schemes of finite type over $U$. We say that $X$ is a \emph{strict ind-scheme\/} if it admits a presentation of the form $\lim\limits_{\longrightarrow} X_i$ such that all the corresponding morphisms $X_i\to X_j$ are closed embedding. Below we only consider strict ind-schemes of ind-finite type over $U$, and we only consider presentations $X=\lim\limits_{\longrightarrow} X_i$ with morphisms $X_i\to X_j$ being closed embeddings and with each $X_i$ of finite type over $U$. It is easy to check the following (see, e.g.,~\cite[Lemma~2.4]{GoertzGrassmannians}).
\begin{lemma}\label{lm:indschemes}
    Let $X=\lim\limits_{\longrightarrow} X_i$ be an ind-scheme over $U$ and let $Y$ be a scheme of finite type over $U$. Then every morphism from $Y$ to $X$ factors through some $X_i$.
\end{lemma}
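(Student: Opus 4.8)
The plan is to reduce the statement to the claim that for a quasi-compact, quasi-separated $U$-scheme $Y$ the canonical map
\[
    \lim\limits_{\longrightarrow} X_i(Y)\to X(Y)
\]
is a bijection. Indeed, a morphism $Y\to X$ is the same as an element of $X(Y)$, and by Yoneda it factors through $X_i$ precisely when it lies in the image of $X_i(Y)\to X(Y)$; so the bijectivity above gives the factorization for \emph{every} morphism. Since $Y$ is of finite type over the affine scheme $U=\spec R$, it is quasi-compact and quasi-separated, and it suffices to prove this bijectivity. Recall that $X$ is, by definition, the sheafification of the presheaf $P\colon S\mapsto\lim\limits_{\longrightarrow} X_i(S)$ on the \'etale site $\Aff/U$, and that all transition morphisms $X_i\to X_j$ are closed embeddings, in particular monomorphisms.

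First I would establish injectivity, which does not require quasi-compactness. The key point is that $P$ is a separated presheaf: if two sections of $P(S)$ agree on an \'etale cover, I represent them by sections $a,b\in X_m(S)$ for a common index $m$; on each member of the cover they agree in some $X_n(S_\gamma)$ with $n\ge m$, but the monomorphism $X_m\to X_n$ is injective on sections, so $a$ and $b$ already agree on $S_\gamma$ in $X_m(S_\gamma)$, and since $X_m$ is a sheaf, $a=b$. The map from a separated presheaf to its sheafification is injective on sections, so $\lim\limits_{\longrightarrow} X_i(Y)\to X(Y)$ is injective; concretely, two sections in $X_i(Y)$ and $X_{i'}(Y)$ with the same image in $X(Y)$ already agree after enlarging the index.

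The heart of the argument, and the step I expect to be the main obstacle, is surjectivity, i.e.\ controlling the sheafification. Given $f\in X(Y)$, sheafification provides an \'etale cover on which $f$ is represented by sections of $P$; using quasi-compactness of $Y$ I would shrink it to a \emph{finite} \'etale cover $\{Y_\alpha\to Y\}_{\alpha=1}^n$ with $f|_{Y_\alpha}$ coming from $s_\alpha\in X_{i_\alpha}(Y_\alpha)$. By filteredness of the index category there is a single $i$ dominating all the $i_\alpha$, so I may assume $s_\alpha\in X_i(Y_\alpha)$. On each overlap $Y_{\alpha\beta}:=Y_\alpha\times_Y Y_\beta$ the restrictions of $s_\alpha$ and $s_\beta$ agree in $P(Y_{\alpha\beta})=\lim\limits_{\longrightarrow} X_j(Y_{\alpha\beta})$; since there are finitely many overlaps and, by quasi-separatedness of $Y$, each $Y_{\alpha\beta}$ is quasi-compact, agreement in a filtered colimit lets me enlarge $i$ once more so that $s_\alpha$ and $s_\beta$ coincide already in $X_i(Y_{\alpha\beta})$ for all $\alpha,\beta$. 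As $X_i$ is a sheaf, the $s_\alpha$ then glue to a section $s\in X_i(Y)$ whose image in $X(Y)$ is $f$, so $f$ factors through $X_i$.

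The delicate point throughout is that quasi-compactness and quasi-separatedness of $Y$ are exactly what permit all the index-enlargements to be performed with a single common $i$: finiteness of the cover controls the $i_\alpha$, and quasi-compactness of the finitely many overlaps controls the comparison on $Y_{\alpha\beta}$. Without these finiteness properties one could not pass from the presheaf colimit to a single term $X_i$, and this is precisely where the hypothesis that $Y$ is of finite type (hence quasi-compact) over the affine base is used.
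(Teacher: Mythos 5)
Your proof is correct and is essentially the standard argument; the paper in fact gives no proof of this lemma at all, merely remarking that it is easy and citing G\"ortz's notes, where the argument is exactly yours (the presheaf colimit along monomorphisms is separated, hence injects into its sheafification, and quasi-compactness lets one descend a section from a finite \'etale cover to a single index $i$). The only point worth tightening is that the site is $\Aff/U$, consisting of \emph{affine} schemes, so for a non-affine $Y$ of finite type one should first cover $Y$ by finitely many affine opens before invoking the sheafification description of $X(Y)$ --- a routine reduction, and in all of the paper's applications $Y$ is affine anyway.
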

By a \emph{subscheme\/} of $X$ we mean a subsheaf of $X$ represented by a locally closed subscheme of $X_i$ for some $i$. Using Lemma~\ref{lm:indschemes} it is easy to check that this notion does not depend on the presentation of $X$ as a limit of schemes.

\begin{definition}\label{def:stratif}
Let $X=\lim\limits_{\longrightarrow} X_i$ be a strict ind-scheme of ind-finite type over $U$. A collection $Z_\alpha$ of subschemes of $X$ is called \emph{a stratification of $X$}, if for all $i$ we have an equality of sets
\[
    X_i=\coprod_\alpha(Z_\alpha\cap X_i).
\]
\end{definition}
Again, this does not depend on a presentation of $X$ as a limit of schemes. Equivalently, the last condition means that every geometric point of $X$ factors through a unique $Z_\alpha$.

\subsection{Recollections on affine Grassmannians for split semi-simple groups}
Recall that $k$ is a field. Let $G$ be a split semi-simple $k$-group. The results in this section are well-known, when $k$ is the field of complex numbers, see~\cite[Sect.~8]{SorgerLecturesBundles}, and~\cite[Sect.~2]{BravermanFinkelberg}.
\subsubsection{$Gr_G$ is an ind-scheme}\label{sect:indschemestructure}
For positive integers $n$ and $N$, denote by $GL(n)^{(N)}$ the presheaf on $\Aff/k$ given by
\[
 GL(n)^{(N)}(\spec R)=\{(\alpha,\beta)|\alpha,\beta\in\mathrm{Mat}_{n\times n}(t^{-N}R[[t]]), \alpha\beta=1\}.
\]
(Here $\mathrm{Mat}_{n\times n}$ stands for the set of $n\times n$ matrices.) It is easy to see that $GL(n)^{(N)}$ is represented by an affine scheme, so in particular it is a sheaf. In other words, $GL(n)^{(N)}$ is the scheme of loops $\alpha$ such that both $\alpha$ and $\alpha^{-1}$ have a pole of order at most $N$. Clearly, $GL(n)^{(0)}$ is an affine $k$-group. Let $Gr^{(N)}_{GL(n)}$ be the $k$-space $GL(n)^{(N)}/GL(n)^{(0)}$. More precisely, $Gr^{(N)}_{GL(n)}$ is the sheafification of the functor
\[
    \spec R\mapsto GL(n)^{(N)}(\spec R)/GL(n)^{(0)}(\spec R).
\]
It is well known that $Gr^{(N)}_{GL(n)}$ is represented by a projective scheme (as it classifies certain lattices in $k((t))^n$, see~\cite[Prop.~8.3.2]{SorgerLecturesBundles}) and we have
\[
    Gr_{GL(n)}=\lim_{\longrightarrow} Gr^{(N)}_{GL(n)}.
\]
Next, consider a faithful $n$-dimensional representation $V$ of $G$, it gives rise to an embedding of $k$-groups $G\to GL(n)$. Then $GL(n)/G$ is an affine scheme (Indeed, $G$ is geometrically reductive by~\cite{HaboushGeomReductive}, thus the statement follows from~\cite[Sect.~8]{NagataInvariants}, see also~\cite[Corollary]{NisnevichAffineHomogeneous}). Therefore we get a closed embedding $Gr_G\to Gr_{GL(n)}$, see the lemma in the proof of~\cite[Thm.~4.5.1]{BeilinsonDrinfeldHitchin}.
Let $Gr_G^{(N)}$ be the preimage of $Gr^{(N)}_{GL(n)}$ under this closed embedding; it is a closed subscheme in $Gr^{(N)}_{GL(n)}$. We have
\[
    Gr_G=\lim_{\longrightarrow} Gr_G^{(N)}.
\]
In particular, we see that $Gr_G$ is an ind-scheme.

\subsubsection{$L^+G$-orbits}
Let $L^+G$ be the jet scheme of $G$; it represents the functor $\spec R\mapsto G(R[[t]])$. Thus $L^+G$ is an affine $k$-group scheme of infinite type. Note that $L^+GL(n)=GL(n)^{(0)}$. Clearly, $L^+G$ acts on $Gr_G$; we want to describe the orbits.

Note that the action of $L^+G$ preserves the schemes $Gr_G^{(N)}$. Moreover, let $L^+_{(M)}G$ denote the group scheme representing the functor $\spec R\mapsto G(R[[t]]/t^M)$. Then we have a homomorphism $L^+G\to L^+_{(M)}G$ and it is easy to see that the action of $L^+G$ on $Gr_G^{(N)}$ factors through $L^+_{(M)}G$ for $M\gg N$. Since $L^+_{(M)}G$ is a group scheme of finite type, the orbits of $L^+G$ on $Gr_G^{(N)}$ are subschemes of $Gr_G$ (locally closed).

Let $X_*=X_*(G)$ be the lattice of co-characters of $G$. For any choice of a Borel subgroup $B\subset G$ and a split maximal torus $T\subset B$ we get an identification
\begin{equation*}
    X_*=\Hom(\Gm,T)\subset T\bigl(k((t))\bigr).
\end{equation*}
For $\lambda\in X_*$ denote by $t^\lambda$ the corresponding element of $T\bigl(k((t))\bigr)$. Abusing notation, we also denote by $t^\lambda$ the projection to $Gr_G(k)$ of
\[
    t^\lambda\in T\bigl(k((t))\bigr)\subset G\bigl(k((t))\bigr).
\]
Denote by $Gr_G^\lambda$ the $L^+G$-orbit of $t^\lambda$.

Since any two pairs $(T,B)$ are conjugate, $Gr_G^\lambda$ does not depend on the choices of~$T$ and~$B$. The following proposition is well-known, when $k$ is the field of  complex numbers. In general, it is easily reduced to the case, when $k$ is algebraically closed, where it follows from~\cite[Prop.~8]{HainesRapoportParahoric} (see also~\cite{BruhatTits}).

\begin{proposition}\label{pr:cells}
We have $Gr_G^\lambda=Gr_G^\mu$ if and only if $\lambda$ and $\mu$ are $W$-conjugate, where $W$ is the Weyl group of $G$. Further, we have a stratification (see Definition~\ref{def:stratif})
\begin{equation*}
    Gr_G=\coprod_{\lambda\in X_*/W}Gr_G^\lambda.
\end{equation*}
\end{proposition}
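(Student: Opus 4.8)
The plan is to reduce everything to a statement about geometric points and then invoke the Cartan decomposition over an algebraically closed field. First I would record two generalities. Since $Gr_G^\lambda$ is by construction the image of the orbit morphism $L^+G\to Gr_G$, $g\mapsto g\cdot t^\lambda$, and since distinct orbits of a group action on a sheaf are disjoint, for any $\lambda,\mu$ the subschemes $Gr_G^\lambda$ and $Gr_G^\mu$ are either equal or disjoint; they coincide precisely when $t^\mu$ lies in $Gr_G^\lambda$, a condition that can be tested on geometric points. Moreover, as already noted before the proposition, each $Gr_G^\lambda$ is a locally closed subscheme of $Gr_G$ (its intersection with $Gr_G^{(N)}$ is an $L^+_{(M)}G$-orbit for $M\gg N$, hence locally closed). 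Thus the only content is \emph{(i)} to decide when two orbits coincide and \emph{(ii)} to check that every geometric point of $Gr_G$ lies in some $Gr_G^\lambda$.

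Both \emph{(i)} and \emph{(ii)} concern geometric points, so let $\Omega$ be an algebraically closed field over $k$ and consider an $\Omega$-point of $Gr_G$. Since $\spec\Omega$ is the spectrum of a strictly henselian local ring, it admits no nontrivial \'etale covers, so the \'etale sheafification defining $Gr_G$ does not alter the value at $\Omega$; that is, $Gr_G(\Omega)=G(\Omega((t)))/G(\Omega[[t]])$, and $Gr_G^\lambda(\Omega)$ is the image of $G(\Omega[[t]])\cdot t^\lambda$ in this coset space (by Lemma~\ref{lm:indschemes} every such point already factors through some $Gr_G^{(N)}$). The key input is then the Cartan decomposition for the split reductive group $G$ over the discretely valued field $\Omega((t))$,
\[
    G(\Omega((t)))=\coprod_{\lambda\in X_*/W}G(\Omega[[t]])\,t^\lambda\,G(\Omega[[t]]),
\]
which is~\cite[Prop.~8]{HainesRapoportParahoric} (see also~\cite{BruhatTits}); here each double coset is represented by a unique $W$-orbit of co-characters.

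From the Cartan decomposition both claims follow. For the easy direction of \emph{(i)}, if $\mu=w\lambda$ with $w\in W$, choose a representative $n\in N_G(T)(k)\subset G(k[[t]])$ of $w$; then $t^\mu=n\,t^\lambda\,n^{-1}$, and since $n^{-1}\in G(k[[t]])$ we get $t^\mu\,G(k[[t]])=n\,t^\lambda\,G(k[[t]])$, so $t^\mu\in Gr_G^\lambda$ and the orbits agree. Conversely, if $Gr_G^\lambda=Gr_G^\mu$ then $t^\mu\in Gr_G^\lambda(\Omega)$, hence $t^\mu\in G(\Omega[[t]])\,t^\lambda\,G(\Omega[[t]])$, and the Cartan decomposition forces $\lambda$ and $\mu$ to be $W$-conjugate. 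For \emph{(ii)}, any geometric point of $Gr_G$ is an $\Omega$-point, hence a coset $g\,G(\Omega[[t]])$; by the Cartan decomposition $g$ lies in a unique double coset, so the point lies in $Gr_G^\lambda$ for a unique class $\lambda\in X_*/W$. Combined with the disjointness of distinct orbits, this yields the decomposition in the sense of Definition~\ref{def:stratif}: for each $N$ one has $Gr_G^{(N)}=\coprod_{\lambda}\bigl(Gr_G^\lambda\cap Gr_G^{(N)}\bigr)$ as sets.

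The main obstacle is the honest verification that the geometric points of the sheafified quotient $Gr_G$ are exactly the cosets $G(\Omega((t)))/G(\Omega[[t]])$ --- i.e.\ that \'etale sheafification introduces no new $\Omega$-points --- together with the bookkeeping that equality of the locally closed subschemes $Gr_G^\lambda$ is detected on geometric points. Once this is in place, the substantive group-theoretic content is entirely contained in the cited Cartan decomposition, and the reduction to the algebraically closed case is formal.
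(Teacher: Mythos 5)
Your proof is correct and takes essentially the same route as the paper, whose entire argument is the remark that the statement reduces easily to the algebraically closed case, where it follows from the Cartan decomposition of \cite[Prop.~8]{HainesRapoportParahoric}. You have merely made explicit the reduction the paper leaves implicit (no nontrivial \'etale covers of $\spec\Omega$, so sheafification does not change the $\Omega$-points, and the stratification condition is checked on geometric points as in Definition~\ref{def:stratif}).
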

(Slightly abusing notation, we denote by $Gr_G^\lambda$ the orbit $Gr_G^{\tilde\lambda}$, where $\tilde\lambda$ is any lift of $\lambda$ to $X_*$.)
\subsubsection{The structure of $Gr_G^\lambda$}
Note that $G$ is a subgroup of $L^+G$. Denote the $G$-orbit of $t^\lambda$ by $F_G^\lambda$. We have an evaluation morphism $L^+G\to G$ left inverse to the embedding $G\hookrightarrow L^+G$. One checks that the evaluation morphism takes the stabilizer of $t^\lambda$ in $L^+G$ to the stabilizer of $t^\lambda$ in $G$. This is used to construct a morphism
\begin{equation}\label{projection}
    Gr_G^\lambda\to F_G^\lambda.
\end{equation}
\begin{remarks}\label{rm:smoothmap}
\stepzero\noindstep
    While this morphism is easy to construct, when $k$ is the field of characteristic zero, some care should be taken in finite characteristic; let us indicate the main steps. First of all, one can replace $L^+G$ by $L^+_{(M)}G$, where $M$ is large enough. Next, one needs to show that the stabilizer of $t^\lambda$ in $L^+_{(M)}G$ is smooth (so that $Gr_G^\lambda$ is a categorical quotient of $L^+_{(M)}G$). To this end, one first calculates the Lie algebra $\fk$ of this stabilizer. Then one constructs a subgroup $K\subset L^+_{(M)}G$ with the Lie algebra $\fk$. It remains to show that $K$ is contained in the stabilizer of $t^\lambda$, which is done by reducing the statement to the case $G=GL(n)$ via a faithful representation of $G$.

\noindstep\label{rm:smoothitem}
    On can show that this morphism is smooth and the fibers are isomorphic to vector spaces.
\end{remarks}

It is easy to see that the $G$-stabilizer $P_s$ of any $s\in F_G^\lambda$ is a parabolic subgroup in $G_{k(s)}=G\times_k\spec k(s)$, where, as usual, $k(s)$ is the residue field of $s$. This is a parabolic subgroup of type $\lambda$, that is, the Weyl group of a Levi factor of $P_s$ is the stabilizer of $\lambda$ in $W$.
\begin{lemma}
The map $s\mapsto P_s$ is an isomorphism from $F_G^\lambda$ to the flag variety of parabolic subgroups of type $\lambda$ in $G$.
\end{lemma}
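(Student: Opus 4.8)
The plan is to identify both the source $F_G^\lambda$ and the target flag variety with one and the same homogeneous space $G/P_\lambda$, where $P_\lambda:=\mathrm{Stab}_G(t^\lambda)$ is the scheme-theoretic stabilizer, and then to check that $s\mapsto P_s$ is the resulting tautological identification. First I would record what was observed just before the statement: $P_\lambda$ is a parabolic subgroup of $G$ of type $\lambda$, and in particular it is smooth and connected. The orbit morphism $\pi\colon G\to Gr_G$, $g\mapsto g\cdot t^\lambda$, factors through some finite-type piece $Gr_G^{(N)}$ by Lemma~\ref{lm:indschemes}, and the general theory of orbits of a smooth affine group scheme (the orbit map induces an isomorphism of the fppf quotient $G/\mathrm{Stab}$ onto the orbit) shows that $\pi$ identifies $F_G^\lambda$ with $G/P_\lambda$, a smooth projective partial flag variety. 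Because $P_\lambda$ is smooth, no separability difficulty arises and the identification is scheme-theoretic, not merely on points.

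Next I would treat the target. The flag variety $\mathrm{Par}_\lambda$ of parabolic subgroups of type $\lambda$ is the $G$-homogeneous scheme $G/N_G(P_\lambda)$; since a parabolic subgroup is its own normalizer, $N_G(P_\lambda)=P_\lambda$, so the conjugation morphism $q\colon G\to\mathrm{Par}_\lambda$, $g\mapsto gP_\lambda g^{-1}$, presents $\mathrm{Par}_\lambda$ as $G/P_\lambda$ as well.

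Finally I would compare the two presentations. The assignment $s\mapsto P_s=\mathrm{Stab}_G(s)$ is $G$-equivariant, since $P_{g\cdot s}=gP_sg^{-1}$, and it sends the base point $t^\lambda$ to $P_\lambda$; hence its composite with $\pi$ equals the conjugation morphism $q$. As $q$ is invariant under right translation by $P_\lambda$ and $\pi$ is the fppf quotient of $G$ by $P_\lambda$, the morphism $q$ descends uniquely to a morphism of schemes $F_G^\lambda\to\mathrm{Par}_\lambda$, which is exactly $s\mapsto P_s$; being the canonical map between the two quotients of $G$ by the single subgroup $P_\lambda$, it is an isomorphism. The main point to watch, and the only place where the characteristic could intervene, is the first step: one must know that the scheme-theoretic $G$-stabilizer of $t^\lambda$ is exactly the smooth parabolic $P_\lambda$, so that $F_G^\lambda$ really is the homogeneous space $G/P_\lambda$ and $\pi$ is a genuine fppf quotient rather than merely a bijection on points. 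Once this is in hand---and it follows from the description of the stabilizer recalled above---the remaining steps are formal.
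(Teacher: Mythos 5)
Your proof is correct and follows essentially the same route as the paper, which also identifies both $F_G^\lambda$ and the flag variety with $G/P_\lambda$ for $P_\lambda$ the stabilizer of $t^\lambda$; you simply make explicit the points the paper leaves implicit (that the scheme-theoretic stabilizer is the smooth parabolic $P_\lambda$, that a parabolic is its own normalizer, and that $s\mapsto P_s$ is the tautological comparison of the two quotients).
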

We emphasize that this isomorphism does not depend on a choice of a torus $T$ and a Borel subgroup $B$.
\begin{proof}
After a choice of a Borel subgroup and a split maximal torus in $G$, both $F_G^\lambda$ and the flag variety get identified with $G/P_\lambda$, where $P_\lambda$ is the stabilizer of $t^\lambda$ in $G$.
\end{proof}
Thus we can (and always will) identify $F_G^\lambda$ with the flag variety of parabolic subgroups of type $\lambda$ in $G$, so that morphism~\eqref{projection} becomes a morphism from $Gr_G^\lambda$ to a partial flag variety of $G$.

\subsubsection{The action of automorphisms}
Recall from~\cite[Exp.~XXIV]{SGA3-3} (see especially Th.~1.3) that we have an exact sequence
\begin{equation}\label{eq:exseq}
    1\to\Inn(G)\to\Aut(G)\to\Out(G)\to1.
\end{equation}
Here $\Aut(G):=\Aut_{k-gr}(G)$; $\Inn(G)$ is the group of inner automorphisms of $G$, it is isomorphic to the quotient of $G$ by its center; $\Out(G)$ is the group of outer automorphisms, it is a finite $k$-group. Note that $\Aut(G)$ acts on $Gr_G$ but the strata of Proposition~\ref{pr:cells} are only $\Inn(G)$ invariant, $\Out(G)$ permutes them. We want to develop a coarser stratification with $\Aut(G)$-invariant strata.

First of all, $\Out(G)$ acts on a root datum of $G$ and, thus, on the Weyl group $W=W(G)$ and on the co-character lattice $X_*=X_*(G)$. Thus we get a semidirect product $W\leftthreetimes\Out(G)$ and its action on $X_*$. For $\hat\lambda\in X_*/(W\leftthreetimes\Out(G))$, we denote by $\Orb(\hat\lambda)\subset X_*/W$ the $\Out(G)$-orbit of any lift of $\hat\lambda$ to $X_*/W$. Note that $\Inn(G)$ preserves the orbits $Gr_G^\lambda$, while $\Aut(G)$ permutes them according to the action of $\Out(G)$ on $X_*/W$. For $\hat\lambda\in X_*/(W\leftthreetimes\Out(G))$ set
\[
    Gr^{\hat\lambda}_G:=\bigcup_{\lambda\in\Orb(\hat\lambda)}Gr^\lambda_G\subset Gr_G.
\]
Clearly, this is $\Aut(G)$-invariant locally closed subscheme of $Gr_G$. Moreover, if $\lambda_1,\lambda_2\in\Orb(\hat\lambda)$, then $Gr^{\lambda_1}_G\approx Gr^{\lambda_2}_G$, so these orbits have the same dimension. It follows that $Gr^{\lambda_1}_G$ cannot lie in the closure of $Gr^{\lambda_2}_G$, and we have (scheme theoretically)
\[
    Gr^{\hat\lambda}_G=\coprod_{\lambda\in\Orb(\hat\lambda)}Gr^\lambda_G.
\]
By Proposition~\ref{pr:cells} we have
\begin{equation}\label{eq:cells}
    Gr_G=\coprod_{\hat\lambda\in X_*/(W\leftthreetimes\Out(G))}Gr^{\hat\lambda}_G.
\end{equation}
Next, set
\[
    F_G^{\hat\lambda}:=\bigcup_{\lambda\in\Orb(\hat\lambda)}F_G^\lambda=
    \coprod_{\lambda\in\Orb(\hat\lambda)}F_G^\lambda.
\]
The next lemma follows from definitions
\begin{lemma}\label{L:Autequiv}
For each $\hat\lambda\in X_*/(W\leftthreetimes\Out(G))$ the morphisms~\eqref{projection} give rise to an $\Aut(G)$-equivariant morphism
\begin{equation}\label{eq:mortopar}
    Gr_G^{\hat\lambda}\to F_G^{\hat\lambda}.
\end{equation}
\end{lemma}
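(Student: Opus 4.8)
The plan is to build morphism~\eqref{eq:mortopar} by taking morphism~\eqref{projection} on each stratum of the disjoint union $Gr_G^{\hat\lambda}=\coprod_{\lambda\in\Orb(\hat\lambda)}Gr_G^\lambda$, landing in the corresponding component of $F_G^{\hat\lambda}=\coprod_{\lambda\in\Orb(\hat\lambda)}F_G^\lambda$, and then to deduce $\Aut(G)$-equivariance from the equivariance of the evaluation morphism that underlies~\eqref{projection}. First I would recall that~\eqref{projection} descends from the evaluation morphism $\mathrm{ev}\colon L^+G\to G$, which is left inverse to $G\hookrightarrow L^+G$ and carries the stabilizer of $t^\lambda$ in $L^+G$ to its stabilizer in $G$; assembling these over $\lambda\in\Orb(\hat\lambda)$ immediately gives a morphism $Gr_G^{\hat\lambda}\to F_G^{\hat\lambda}$.

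The heart of the matter is that a single $\Aut(G)$-action sits above everything. For a test $k$-algebra $R$, an $R$-point $\phi$ of $\Aut(G)$ is an automorphism of $G_R$, and it acts by post-composition on $L^+G(R)=G_R(R[[t]])$ and on $G(R)=G_R(R)$; since these two actions are exchanged by evaluation at $t=0$, the morphism $\mathrm{ev}$ is $\Aut(G)$-equivariant. Passing to quotients, the induced actions are precisely the $\Aut(G)$-actions on $Gr_G$ and on the flag varieties $F_G^\lambda$ already in use, so on each stratum morphism~\eqref{projection} intertwines them.

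It remains to check that $\phi$ respects the indexing set. Writing $\bar\phi\in\Out(G)$ for the image of $\phi$ under~\eqref{eq:exseq}, the element $\phi\cdot t^\lambda$ lies in the $L^+G$-orbit $Gr_G^{\bar\phi(\lambda)}$ and $\phi$ carries $F_G^\lambda$ to $F_G^{\bar\phi(\lambda)}$; as $\bar\phi(\lambda)\in\Orb(\hat\lambda)$, the $\Aut(G)$-action preserves both disjoint unions, permuting their components in the same way on source and target. Combined with the stratumwise equivariance of the previous paragraph, this shows the assembled morphism~\eqref{eq:mortopar} is $\Aut(G)$-equivariant.

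I expect the only subtle point to be the scheme-theoretic compatibility of the $\Aut(G)$-action with the categorical-quotient presentation of $Gr_G^\lambda$ used to construct~\eqref{projection}; this is the finite-characteristic issue flagged in Remarks~\ref{rm:smoothmap}. Since~\eqref{projection} and the smoothness of the relevant stabilizers are already granted, however, the equivariance reduces to the functorial identity that $\phi$ commutes with $t\mapsto 0$, which is immediate, so the lemma should follow directly from the definitions.
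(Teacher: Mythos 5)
Your argument is correct and is exactly the unpacking of what the paper leaves implicit (the paper states only that the lemma ``follows from definitions''): equivariance of the evaluation morphism $L^+G\to G$ under the post-composition action of $\Aut(G)$, plus the fact that $\Aut(G)$ permutes the strata of $Gr_G^{\hat\lambda}$ and the components of $F_G^{\hat\lambda}$ through the same $\Out(G)$-action on $\Orb(\hat\lambda)$. No gap; this is the intended proof.
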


By a slight abuse of notation we denote by $1\in Gr_G(k)$ the image of the unity of $G\bigl(k((t))\bigr)$. Clearly $Gr_G^0=F_G^0=\{1\}$. For all $\lambda\ne0$, $F_G^\lambda$ has positive dimension, that is, $F_G^\lambda$ classifies some proper parabolic subgroups.

\subsection{Group scheme affine Grassmannians are ind-schemes}
Assume that we are in the situation of Proposition~\ref{pr:twistbytorsor}. Recall that in Section~\ref{sect:indschemestructure} we defined the closed subschemes $Gr_G^{(N)}\subset Gr_G$ depending on a faithful $n$-dimensional representation $V$ of $G$. Note that \emph{we may assume that $\Out_k(G)$ acts on $GL(n)$ by automorphisms and the embedding is equivariant}. Indeed, it is enough to replace
$V$ by $\oplus_{\sigma\in\Out(G)}V^\sigma$, where $V^\sigma$ is the twist of the representation $V$ by $\sigma$ (we use a splitting of~\eqref{eq:exseq}).

Then it is easy to check that $Gr_G^{(N)}$ is $\Aut(G)$-invariant, so, by Proposition~\ref{pr:scheme}, we can form an associated $U$-scheme
\[
    Gr_\bG^{(N)}:=\cT\times^{\Aut(G)}Gr_G^{(N)}
\]
(indeed, $\Aut(G)$ is affine of finite type by~\cite[Exp.~XXIV, Cor.~1.8]{SGA3-3}). The following proposition shows that $Gr_\bG$ is an ind-scheme.
\begin{proposition}\label{pr:indscheme}
\[
    Gr_\bG=\lim\limits_{\longrightarrow} Gr_\bG^{(N)}.
\]
\end{proposition}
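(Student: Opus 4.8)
The plan is to show that, for a fixed right $\Aut(G)$-bundle $\cT$, forming the associated space $\cT\times^{\Aut(G)}(-)$ commutes with the filtered colimit defining $Gr_G$, and then to read off the result from Proposition~\ref{pr:twistbytorsor}. The heart of the matter is the following formal observation, which I would establish first: the functor $\cX\mapsto\cT\times^{\Aut(G)}\cX$, from $\Aut(G)$-equivariant $U$-spaces to $U$-spaces, preserves filtered colimits. Indeed, by definition this functor is the composite of the presheaf-level assignment $T\mapsto\bigl(\cT(T)\times\cX(T)\bigr)/\!\sim$ with sheafification. Sheafification is a left adjoint, hence preserves all colimits; and at the presheaf level, evaluated at each fixed $T$, the assignment $\cX(T)\mapsto\bigl(\cT(T)\times\cX(T)\bigr)/\!\sim$ is the composite of taking the product with the fixed set $\cT(T)$ and passing to a quotient by an equivalence relation, each of which commutes with filtered colimits of sets. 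Since colimits of presheaves are computed pointwise, the presheaf-level functor, and therefore $\cT\times^{\Aut(G)}(-)$, commutes with filtered colimits.

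Next I would apply this to the presentation $Gr_G=\lim\limits_{\longrightarrow}Gr_G^{(N)}$ of Section~\ref{sect:indschemestructure}, whose index poset (the positive integers) is filtered and whose transition maps are the closed embeddings $Gr_G^{(N)}\hookrightarrow Gr_G^{(N+1)}$. Combining the first step with Proposition~\ref{pr:twistbytorsor} yields the chain of canonical isomorphisms
\[
    Gr_\bG\simeq\cT\times^{\Aut(G)}Gr_G\simeq\cT\times^{\Aut(G)}\lim\limits_{\longrightarrow}Gr_G^{(N)}\simeq\lim\limits_{\longrightarrow}\bigl(\cT\times^{\Aut(G)}Gr_G^{(N)}\bigr)=\lim\limits_{\longrightarrow}Gr_\bG^{(N)},
\]
which is exactly the asserted equality.

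Finally I would verify that this presentation genuinely exhibits $Gr_\bG$ as a strict ind-scheme. Each $Gr_\bG^{(N)}=\cT\times^{\Aut(G)}Gr_G^{(N)}$ is a scheme by Proposition~\ref{pr:scheme}, since $Gr_G^{(N)}$ is projective (being a closed subscheme of the projective scheme $Gr^{(N)}_{GL(n)}$) and hence quasi-projective, while $\Aut(G)$ is affine of finite type. It then remains to check that the transition morphisms $Gr_\bG^{(N)}\to Gr_\bG^{(N+1)}$ are closed embeddings; this I would verify \'etale-locally on $U$, where a trivialization of $\cT$ identifies $Gr_\bG^{(N)}\times_UT$ with $Gr_G^{(N)}\times_kT$ and the transition map with the closed embedding $Gr_G^{(N)}\times_kT\hookrightarrow Gr_G^{(N+1)}\times_kT$, using that the property of being a closed embedding is \'etale-local on the target. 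The only step requiring real care is the interchange of the associated-space functor with the filtered colimit in the first paragraph; once that commutation is secured, everything else is formal or is supplied by the results already proved.
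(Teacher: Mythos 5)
Your proof is correct, but it takes a genuinely different route from the paper. The paper's proof is an \'etale descent argument: it constructs the canonical comparison morphism $\lim\limits_{\longrightarrow} Gr_\bG^{(N)}\to\cT\times^{\Aut(G)}\bigl(\lim\limits_{\longrightarrow} Gr_G^{(N)}\bigr)=Gr_\bG$ and checks that it becomes an isomorphism after a surjective \'etale base change $T\to U$ trivializing $\cT$, where both sides are identified with $Gr_G\times_kT$. You instead prove the abstract statement that $\cT\times^{\Aut(G)}(-)$ preserves filtered colimits and deduce the result formally; this is a perfectly valid alternative, and it has the merit of applying verbatim to any filtered presentation of $Gr_G$ and of making the closed-embedding property of the transition maps an explicit (rather than implicit) verification. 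One small point you should tighten: the colimit $\lim\limits_{\longrightarrow}Gr_G^{(N)}$ is taken in the category of $U$-spaces, i.e.\ sheaves, so it is the sheafification of the pointwise presheaf colimit; your pointwise argument therefore needs the additional (standard) remark that the presheaf-level functor $P\mapsto(\cT\times P)/\!\sim$ sends local isomorphisms to local isomorphisms --- equivalently, that $L F^{\mathrm{pre}}\cong L F^{\mathrm{pre}} L$ where $L$ is sheafification --- which follows because $F^{\mathrm{pre}}$ is a coequalizer of products with the fixed sheaves $\cT$ and $\cT\times\Aut(G)$ and $L$ is left exact. With that remark inserted, the interchange of the associated-space functor with the sheaf-level filtered colimit is legitimate and your argument goes through; the paper's \'etale-local check achieves the same end more directly and in the style used elsewhere in Section~\ref{sect:Grassm}.
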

\begin{proof}
It is easy to construct a morphism of $U$-spaces
\[
    \lim_{\longrightarrow} Gr_\bG^{(N)}\to\cT\times^{\Aut(G)}\left(\lim_{\longrightarrow} Gr_G^{(N)}\right)=Gr_\bG.
\]
It is enough to check that it is an isomorphism after a surjective \'etale base change. But we can choose this base change $T\to U$ so that both spaces become ${Gr_G\times_kT}$.
\end{proof}

\subsection{Decompositions of group scheme affine Grassmannians}
Assume again that we are in the situation of Proposition~\ref{pr:twistbytorsor}. The stratification of Proposition~\ref{pr:cells} does not always give rise to a stratification of $Gr_\bG$ but the coarser stratification~\eqref{eq:cells} does as we presently explain. Recall that $Gr_G^{\hat\lambda}\subset Gr_G$ is $\Aut(G)$-invariant, so we can set
\[
    Gr^{\hat\lambda}_\bG:=\cT\times^{\Aut(G)}Gr^{\hat\lambda}_G.
\]
By Proposition~\ref{pr:twistbytorsor} $Gr^{\hat\lambda}_\bG$ is a locally closed subscheme of $Gr_\bG$.
\begin{proposition}\label{pr:twistedcells}
We have a stratification (in the sense of Definition~\ref{def:stratif})
\begin{equation*}
    Gr_\bG=\coprod_{\hat\lambda\in X_*/(W\leftthreetimes\Out(G))}Gr^{\hat\lambda}_\bG.
\end{equation*}
\end{proposition}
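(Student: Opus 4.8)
The plan is to deduce the statement from the stratification \eqref{eq:cells} of $Gr_G$ by transporting it through the twisting construction of Proposition~\ref{pr:twistbytorsor}. I have already recorded that each $Gr^{\hat\lambda}_\bG=\cT\times^{\Aut(G)}Gr^{\hat\lambda}_G$ is a locally closed subscheme of $Gr_\bG$ and that $Gr_\bG=\lim\limits_{\longrightarrow}Gr_\bG^{(N)}$ is a strict ind-scheme (Proposition~\ref{pr:indscheme}). So, by the reformulation following Definition~\ref{def:stratif}, it suffices to show that every geometric point of $Gr_\bG$ factors through exactly one of the $Gr^{\hat\lambda}_\bG$; finiteness of the decomposition at each level $Gr_\bG^{(N)}$ will follow from the corresponding finiteness for $Gr_G$.

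First I would fix a geometric point $x\colon\spec K\to Gr_\bG$ lying over $u\in U$, with $K$ algebraically closed. Since the \'etale cover $T\to U$ of Proposition~\ref{pr:twistbytorsor} trivializes $\cT$, and the geometric point $\spec K\to U$ lifts through the surjective \'etale morphism $T\to U$, the restriction of $\cT$ to $\spec K$ is trivial. A choice of trivialization $\tau$ then yields, exactly as in the proof of Proposition~\ref{pr:twistbytorsor}, an isomorphism $Gr_\bG\times_U\spec K\simeq Gr_G\times_k\spec K$ carrying each $Gr^{\hat\lambda}_\bG\times_U\spec K$ onto $Gr^{\hat\lambda}_G\times_k\spec K$.

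Next I would invoke that \eqref{eq:cells} is a stratification of $Gr_G$: the $K$-point of $Gr_G$ underlying $x$ factors through a unique $Gr^{\hat\lambda}_G$, whence $x$ factors through the corresponding $Gr^{\hat\lambda}_\bG$ and through no other. The one thing to verify is that the resulting index $\hat\lambda$ is independent of the auxiliary trivialization $\tau$: replacing $\tau$ by $\tau\sigma$ with $\sigma\in(\Aut(G))(K)$ alters the identification by the action of $\sigma$ on $Gr_G\times_k\spec K$, but each $Gr^{\hat\lambda}_G$ is $\Aut(G)$-invariant by construction, so the stratum through which the point passes does not change. This delivers existence and uniqueness at once.

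The argument is largely bookkeeping; the one point deserving care is the legitimacy of the reduction to geometric points, that is, that the condition in Definition~\ref{def:stratif} is insensitive to the \'etale-local trivialization of $\cT$. I expect this to be the main (minor) obstacle, and it is precisely where the $\Aut(G)$-invariance of the strata $Gr^{\hat\lambda}_G$ does the essential work: it is what guarantees both that the twisted strata $Gr^{\hat\lambda}_\bG$ are well-defined subschemes and that the decomposition descends from the trivializing cover $T\to U$ (on which $Gr_\bG\times_UT\simeq Gr_G\times_kT$ with matching strata) back to $U$.
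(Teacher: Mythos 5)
Your proposal is correct and is exactly the argument the paper intends: the paper's proof is the one line ``This follows from~\eqref{eq:cells},'' and the substance it leaves implicit --- the reduction to geometric points, the \'etale-local trivialization of $\cT$, and the $\Aut(G)$-invariance of the strata $Gr_G^{\hat\lambda}$ making the index $\hat\lambda$ independent of the chosen trivialization --- is precisely what you spell out. No discrepancy with the paper's approach.
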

\begin{proof}
This follows from~\eqref{eq:cells}.
\end{proof}

Set also
\[
    F^{\hat\lambda}_\bG:=\cT\times^{\Aut(G)}F_G^{\hat\lambda}.
\]
Note that $F^{\hat\lambda}_\bG$ is a union of connected components of a scheme classifying parabolic subgroup schemes of $\bG$, see~~\cite[Exp.~XXVI, Sect.~3]{SGA3-3}.

\begin{proposition}\label{Pr:TwistedBB}
For any $\hat\lambda\in X_*/(W\leftthreetimes\Out(G))$ there is a $U$-morphism
\begin{equation}\label{eq:bblimit}
    Gr^{\hat\lambda}_\bG\to F^{\hat\lambda}_\bG.
\end{equation}
\end{proposition}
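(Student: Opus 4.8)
The plan is to descend the $\Aut(G)$-equivariant morphism of Lemma~\ref{L:Autequiv} through the twist by the torsor $\cT$. Recall that by Proposition~\ref{pr:twistbytorsor} we have $Gr^{\hat\lambda}_\bG=\cT\times^{\Aut(G)}Gr_G^{\hat\lambda}$, while by definition $F^{\hat\lambda}_\bG=\cT\times^{\Aut(G)}F_G^{\hat\lambda}$; thus it suffices to produce a morphism of the associated $U$-spaces induced by the $\Aut(G)$-equivariant morphism $f\colon Gr_G^{\hat\lambda}\to F_G^{\hat\lambda}$ of Lemma~\ref{L:Autequiv}.

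The key point is that the associated-space construction $\cT\times^{\Aut(G)}(-)$ is functorial with respect to $\Aut(G)$-equivariant morphisms of left $\Aut(G)$-spaces, and I would verify this directly from the definition in Section~\ref{sect:associated}. Since $f(gx)=gf(x)$ for all sections $g$ of $\Aut(G)$ and $x$ of $Gr_G^{\hat\lambda}$, the morphism $\Id_\cT\times f\colon\cT\times_kGr_G^{\hat\lambda}\to\cT\times_kF_G^{\hat\lambda}$ respects the equivalence relation defining the associated spaces: a local section $(sg,x)$ and its equivalent $(s,gx)$ are carried to $(sg,f(x))$ and to $(s,gf(x))=(s,f(gx))$, which again represent the same class. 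Passing to the quotient presheaves and sheafifying, we obtain the desired morphism~\eqref{eq:bblimit}.

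It remains to note that this is genuinely a $U$-morphism of $U$-schemes. Both $Gr_G^{\hat\lambda}$ and $F_G^{\hat\lambda}$ are quasi-projective $k$-schemes and $\Aut(G)$ is affine of finite type, so by Proposition~\ref{pr:scheme} the source and target of~\eqref{eq:bblimit} are schemes; the whole construction takes place over $U$, whence the morphism is over $U$. I expect no substantial obstacle here: the only content is the compatibility with the defining equivalence relation checked above. Equivalently, one may pass to a trivializing \'etale cover $T\to U$ of $\cT$, over which~\eqref{eq:bblimit} becomes $f\times\Id_T\colon Gr_G^{\hat\lambda}\times_kT\to F_G^{\hat\lambda}\times_kT$, and observe that it descends exactly as in the proof of Proposition~\ref{pr:twistbytorsor}.
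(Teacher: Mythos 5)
Your proposal is correct and matches the paper's proof, which simply says to twist the $\Aut(G)$-equivariant morphism~\eqref{eq:mortopar} by the torsor $\cT$; you have merely spelled out the routine verification that the associated-space construction is functorial for equivariant morphisms. No discrepancy in approach.
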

\begin{proof}
Twist~\eqref{eq:mortopar} by $\cT$.
\end{proof}
\begin{remark}
One can show that the above morphism is smooth with fibers isomorphic to a $k$-vector space, cf.~Remark~\ref{rm:smoothmap}\eqref{rm:smoothitem}.
\end{remark}

\section{A proof of Theorem~\ref{ThMainb}}\label{sect:exoticism}
We will use notation from the statement of the theorem. Let
\[
    \phi:\bG\times_U(\P^1_U-Y)\to\cE|_{\P^1_U-Y}
\]
be a trivialization. Then by Proposition~\ref{Pr:ModuliTwistedGr} $(\cE,\phi)$ gives rise to a $U$-morphism $s:Y\to Gr_\bG$. Let $\omega$ be a generic point of $Y$; let $\hat\lambda\in X_*/(W\leftthreetimes\Out(G))$ be such that $s(\omega)\in Gr^{\hat\lambda}_\bG$, see Proposition~\ref{pr:twistedcells}. Composing $s$ with~\eqref{eq:bblimit}, we get a $U$-morphism $\omega\to F^{\hat\lambda}_\bG$, that is, a choice of a parabolic subgroup in $\bG\times_U\omega$. However, we assumed that $\bG$ is anisotropic at $\omega$, so this parabolic subgroup cannot be proper, which means that $\hat\lambda=0$ (see the paragraph after Lemma~\ref{L:Autequiv}). But then, since $Gr^0_\bG$ is a closed subscheme of $Gr_\bG$, we see that $s(\bar\omega)\subset Gr^0_\bG$, where $\bar\omega$ is the Zariski closure of $\omega$. Since this is true for all generic points $\omega$ of $Y$, we see that $s(Y)\subset Gr^0_\bG$, that is, $s$ is the trivial section of $Gr_\bG$. It follows that $\cE$ is trivial. Theorem~\ref{ThMainb} is proved. \hfill\qed

\section{Extending sections of affine Grassmannians}\label{sect:constructing}
\subsection{Proof of Theorem~\ref{ThMaina}}\label{ProofThMaina} We use the notation from the theorem statement. We view $Z_u$ as a $U$-scheme via the composition morphism $Z_u\to u\to U$. The proof is based on the following extension property.
\begin{proposition}\label{pr:GrSurj}
The restriction morphism $Gr_\bG(Z)\to Gr_\bG(Z_u)$ is surjective.
\end{proposition}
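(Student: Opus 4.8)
The plan is to prove surjectivity of the restriction map $Gr_\bG(Z)\to Gr_\bG(Z_u)$ by unwinding what a $Z_u$-point of the affine Grassmannian is and lifting it along the closed embedding $Z_u\hookrightarrow Z$. First I would recall that $Z$ is finite and \'etale over $U$, so after the \'etale base change furnished by Lemma~\ref{lm:decomp} I may assume $Z$ is a disjoint union of copies of $U$ and that $Z_u$ is correspondingly a disjoint union of copies of $u=\spec k(u)$. Because the affine Grassmannian is a sheaf and the question of surjectivity of sections over $Z$ versus $Z_u$ can be checked componentwise, this reduces the problem to lifting a $k(u)$-point of $Gr_\bG$ (on each connected component of $Z_u$) to an $R'$-point, where $R'$ is the local ring of the corresponding component of $Z$, which is again a regular local $k$-algebra with residue field $k(u)$.

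Next I would use the description of a $Z_u$-point of $Gr_\bG$ in concrete loop-group terms. By the very definition of $Gr_\bG$ as the sheafification of $T\mapsto\bG(\dot D_T)/\bG(D_T)$, a section over $Z_u$ is represented, \'etale-locally, by an element of $\bG(\dot D_{Z_u})$, that is, a Laurent loop with coefficients in the residue field of the relevant component. The key step is then to lift this loop from $\bG(A((t)))$ with $A=k(u)$ to $\bG(A'((t)))$ with $A'$ the local ring over the corresponding component of $Z$. Here I would exploit smoothness of $\bG$ together with the fact that $A'\to A=A'/\fm'$ is a surjection of local rings with $A'$ complete along powers of $t$ only in the formal-disc direction; more precisely, I would represent the loop as lying in $\bG(t^{-N}A[[t]])$ for some $N$, clear denominators to reduce to lifting a point of an affine smooth $A[[t]]$-scheme, and invoke formal smoothness. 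This is exactly the mechanism already used in the lemma inside the proof of Proposition~\ref{Pr:ModuliTwistedGr}, where an $A[[t]]$-homomorphism into $A$ is successively lifted mod $t^n$ to an $A[[t]]$-homomorphism into $A[[t]]$; I would adapt that argument, lifting instead along the surjection $A'\to A$ of the coefficient ring.

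I expect the main obstacle to be the interplay between the two completions: the section lives naturally over the punctured formal disc $\dot D_{Z_u}=\spec A((t))$, and I must lift both with respect to the maximal ideal of the base ring $A'$ and control the pole order in $t$ simultaneously. The cleanest route is to pass through the moduli interpretation of Proposition~\ref{Pr:ModuliTwistedGr}: a $Z_u$-point of $Gr_\bG$ is a pair $(\cE,\tau)$ consisting of a $\bG$-bundle on $\P^1_{Z_u}$ with a trivialization off $Y\times_U Z_u$, and lifting it amounts to deforming this bundle-with-trivialization over the thickening $Z_u\hookrightarrow Z$. Since $\bG$ is smooth, obstructions to such deformations live in $H^2$ groups on the fibers $\P^1$, which vanish, so the lift exists; the trivialization lifts for the same reason once the bundle does. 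I would therefore structure the argument so that the formal lifting of loops and the deformation-theoretic vanishing are two faces of the same computation, presenting whichever is more economical given the tools already set up in Section~\ref{sect:Grassm}.

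Finally, I would check that the lift constructed \'etale-locally descends: the ambiguity in representing a section by a loop is precisely multiplication by $\bG(D_{Z})$, and since the lifting is carried out compatibly with the $\bG(D_{Z_u})$-action, the resulting $Z$-point is well-defined and restricts to the given $Z_u$-point by construction. This yields the claimed surjectivity of $Gr_\bG(Z)\to Gr_\bG(Z_u)$.
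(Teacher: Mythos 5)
Your argument has a fatal gap at its central step, and the tell is that you never use the standing hypothesis of Theorem~\ref{ThMaina} that the restriction of $\bG_Z$ to each connected component of $Z$ contains a \emph{proper parabolic subgroup scheme}. Without that hypothesis the proposition is simply false: by Theorem~\ref{ThMainb}, if $\bG_Z$ is anisotropic at the generic points of $Z$ then $Gr_\bG(Z)$ consists only of the base point, while $Gr_\bG(Z_u)$ can contain nontrivial points whenever $\bG_{Z_u}$ is isotropic --- this failure of surjectivity is precisely the mechanism behind the paper's counterexamples (Corollary~\ref{cor:example}). So any correct proof must invoke the parabolic subgroup scheme of $\bG_Z$ somewhere.

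Concretely, the step that fails is the lifting. The closed immersion $Z_u\hookrightarrow Z$ is cut out by $\fm_u\cO_Z$, which is not nilpotent, and $\cO_Z$ is not complete (nor Henselian) along it; so neither formal smoothness of $\bG$ nor deformation theory with obstructions in $H^2$ applies --- those tools lift points only across nilpotent (or complete) thickenings, which is exactly what the lemma inside Proposition~\ref{Pr:ModuliTwistedGr} exploits in the $t$-direction and what is unavailable in the $\fm_u$-direction. A smooth affine scheme over a non-Henselian local ring need not have a section through a given point of the special fiber, and $\bG(A'((t)))\to\bG(A((t)))$ need not be surjective. (Your preliminary reduction by \'etale base change via Lemma~\ref{lm:decomp} is also problematic, since surjectivity on sections is not an \'etale-local property, but that is secondary.) The paper's proof circumvents all of this: it chooses opposite parabolic subgroup schemes $\bP^\pm\subset\bG_Z$ with unipotent radicals $\bU^\pm$, forms the subgroup functor $\bE$ they generate, and uses~\cite[Lemma~6.2]{FedorovPanin} to replace the loop representing the given $Z_u$-point, modulo $\bG(D_{Z_u})$, by an element of $\bE(\dot D_{Z_u})$. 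Since $\bU^\pm$ are vector bundles over $Z$, sections of $\bE$ over any affine $Z$-scheme restrict surjectively to sections over a closed subscheme (Lemma~\ref{lm:surjectivity}), and the element of $\bE(\dot D_{Z_u})$ lifts to $\bE(\dot D_Z)$. That is the idea your proposal is missing.
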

\begin{proof}[Derivation of Theorem~\ref{ThMaina} from the proposition]
Note first that the exact sequence from~\cite[Cor.~3.10(a)]{GilleTorseurs} shows that $E$ is locally trivial in Zariski topology on $\P^1_u$. Let $v$ be a $k(u)$-rational point of $Z_u$, then $E$ is trivial on $\P^1_u-v\approx\A^1_u$ by the second part of~\cite[Cor.~3.10(a)]{GilleTorseurs}. Thus $E$ is also trivial on $\P^1_u-Z_u$, let $\tau$ be a trivialization. The pair $(E,\tau)$ can be viewed as a $Z_u$-point of $Gr_\bG$ due to Proposition~\ref{Pr:ModuliTwistedGr}. By the proposition, this $Z_u$-point can be extended to a $Z$-point of $Gr_\bG$, that is, to a pair $(\cE,\tilde\tau)$, where $\cE$ is a $\bG$-bundle over $\P^1_U$, $\tilde\tau$ is its trivialization away from $Z$. Clearly, $\cE$ satisfies the requirements of Theorem~\ref{ThMaina}.
\end{proof}

The following argument is similar to~\cite[Sect.~5.7]{FedorovPanin}. By our assumption on the group scheme $\bG_Z=\bG\times_UZ$ we can and will choose a parabolic subgroup scheme $\bP^+\subset\bG_Z$ such that the restriction of $\bP^+$ to each connected component of $Z$ is a proper parabolic subgroup scheme in the restriction of $\bG_Z$ to this component of $Z$.

Since $Z$ is an affine scheme, by~\cite[Exp.~XXVI, Cor.~2.3, Thm.~4.3.2(a)]{SGA3-3} there is an opposite to $\bP^+$ parabolic subgroup scheme $\bP^-$ in $\bG_Z$. Let $\bU^+$ be the unipotent radical of $\bP^+$, and let $\bU^-$ be the unipotent radical of $\bP^-$.

\begin{definition}\label{EYi}
We will write $\bE$ for the functor, sending a $Z$-scheme $T$ to the subgroup $\bE(T)$ of the group $\bG_Z(T)=\bG(T)$ generated by the subgroups $\bU^+(T)$ and $\bU^-(T)$ of the group $\bG_Z(T)=\bG(T)$.
\end{definition}

\begin{lemma}\label{lm:surjectivity}
The functor $\bE$ has the property that for every closed subscheme $S$ in an affine $Z$-scheme $T$ the induced map $\bE(T)\to\bE(S)$ is surjective.
\end{lemma}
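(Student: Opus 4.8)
The plan is to reduce the surjectivity for the generated functor $\bE$ to the analogous surjectivity for the two unipotent radicals $\bU^+$ and $\bU^-$ taken separately, and then to lift words letter by letter. The key formal observation is that for a closed immersion $S\hookrightarrow T$ of affine $Z$-schemes the induced restriction $\bG(T)\to\bG(S)$ is a \emph{group homomorphism} carrying $\bU^\pm(T)$ into $\bU^\pm(S)$, hence carrying $\bE(T)$ into $\bE(S)$. So it suffices to lift an arbitrary element of $\bE(S)$ to one of $\bE(T)$. I would write such an element as a finite word $\gamma_1\cdots\gamma_m$ with each $\gamma_j\in\bU^+(S)\cup\bU^-(S)$ (note $\bU^\pm(S)$ are groups, so closed under inversion, and no separate inverse letters are needed). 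It is then enough to lift each letter $\gamma_j$ to an element $\tilde\gamma_j$ of the corresponding group $\bU^\pm(T)$; the product $\tilde\gamma_1\cdots\tilde\gamma_m$ lies in $\bE(T)$ and, since restriction is a homomorphism, restricts to $\gamma_1\cdots\gamma_m=\gamma$. Thus everything comes down to the surjectivity of $\bU^+(T)\to\bU^+(S)$ and $\bU^-(T)\to\bU^-(S)$.

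To establish this I would use the structure of the unipotent radical of a parabolic subgroup scheme over the affine base $Z=\spec A$. By~\cite[Exp.~XXVI]{SGA3-3}, $\bU^\pm$ admits a finite filtration by normal $Z$-subgroup schemes whose successive quotients are vector groups $\mathbf{W}(\cM)$ with $\cM$ a finite locally free $A$-module; indeed $\bU^\pm$ is even isomorphic, as a $Z$-scheme, to an affine space bundle. For a vector group the functor of points is of the form $\spec B\mapsto\cM\otimes_A B$, naturally in $B$, so for a closed subscheme $S=\spec(B/I)$ the restriction is the map $\cM\otimes_A B\to\cM\otimes_A(B/I)$, which is surjective by right exactness of the tensor product. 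Via the scheme-theoretic affine-space structure this at once gives $\bU^\pm(T)\twoheadrightarrow\bU^\pm(S)$; alternatively one may climb the filtration by a five-lemma-type diagram chase, at each stage lifting a section of the quotient vector group (possible since a torsor under a vector group over an affine scheme is trivial, the first cohomology of a quasi-coherent sheaf on an affine scheme vanishing) and then correcting it by a lift coming from the subgroup.

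Assembling the two ingredients proves the lemma. The step carrying genuine content is the surjectivity for the individual unipotent radicals, which rests entirely on the structural input from~\cite[Exp.~XXVI]{SGA3-3} (affine-space structure, or the vector-group filtration together with right exactness of $-\otimes_A(-)$). I expect this to be the main obstacle to write cleanly, mostly in pinning down the naturality of the identification of points so that restriction corresponds to $\cM\otimes_A(-)$. By contrast, the passage from $\bU^\pm$ to the generated subgroup $\bE$ is purely formal: it uses only that restriction is a homomorphism and that $\bU^\pm(S)$ generate $\bE(S)$, and in particular it requires no representability of the functor $\bE$ itself.
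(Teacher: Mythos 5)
Your proposal is correct and follows essentially the same route as the paper: the paper's (very terse) proof likewise reduces to the surjectivity of $\bU^\pm(T)\to\bU^\pm(S)$, which it deduces from the fact that $\bU^\pm$ are isomorphic to vector bundles as $Z$-schemes (citing~\cite[Exp.~XXVI, Cor.~2.5]{SGA3-3}), leaving the letter-by-letter lifting of words in $\bE(S)$ implicit. You have merely spelled out the formal reduction and offered an alternative filtration argument, both of which are sound.
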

\begin{proof}
The restriction maps $\bU^\pm(T)\to\bU^\pm(S)$ are surjective, since $\bU^\pm$ are isomorphic to vector bundles as $Z$-schemes
(see~\cite[Exp.~XXVI, Cor.~2.5]{SGA3-3}).
\end{proof}

Recall that $D_Z$ is the formal disc over $Z$, $\dot D_Z$ is the punctured formal disc over~$Z$. We view $\dot D_Z$ as a $Z$-scheme via the projection. Thus its closed subscheme $\dot D_{Z_u}$ is also a $Z$-scheme. Hence $\bE(\dot D_Z)$ and $\bE(\dot D_{Z_u})$ make sense.

\begin{proof}[Proof of Proposition~\ref{pr:GrSurj}]
Consider the diagram
\[
\begin{CD}
\bE(\dot D_Z) @>>> \bE(\dot D_{Z_u})\\
@VVV @VVV\\
Gr_\bG(Z) @>>> Gr_\bG(Z_u).
\end{CD}
\]
The left vertical arrow is the composition
\[
\bE(\dot D_Z)\hookrightarrow\bG(\dot D_Z)
\to\bG(\dot D_Z)/\bG(D_Z)\to Gr_\bG(Z).
\]
The right vertical arrow is defined similarly. The top horizontal map is surjective by Lemma~\ref{lm:surjectivity}. Thus it is enough to show that the right vertical arrow is surjective. Let $(F,\tau)\in Gr_\bG(Z_u)$, that is, $F$ is a $\bG_u$-bundle over $\P^1_u$, $\tau$ is its trivialization on $\P^1_u-Z_u$. Recall that we have a canonical morphism $D_{Z_u}\to\P^1_u$ (cf.~diagram~\eqref{eq:fpqccover}).
The exact sequence from~\cite[Cor.~3.10(a)]{GilleTorseurs} shows that $F$ is locally trivial in Zariski topology on $\P^1_u$. Therefore it is trivial on $D_{Z_u}$; let $\sigma$ be a trivialization. Thus, in the notation of Proposition~\ref{pr:gluing}, $(F,\tau,\sigma)\in\cA(F',\hat F)$, where $F'$ and $\hat F$ are trivial $\bG_u$-bundles over $\P^1_u-Z_u$ and $D_{Z_u}$ respectively. Then
\[
    \Phi(F,\tau,\sigma)\in\bG(\dot D_{Z_u})=\prod_{v\in Z_u}\bG(\dot D_v)
\]
(note that $Z_u$ is a finite scheme). We need the following lemma.
\begin{lemma}
Let $k$ be an infinite field, $H$ be a simple simply-connected $k$-group, $P^\pm\subset H$ be opposite parabolic subgroups of $H$ (defined over $k$). Let $U^\pm$ be unipotent radicals of $P^\pm$. For a $k$-scheme $T$ denote by $E(T)$ the subgroup of $H(T)$ generated by $U^\pm(T)$. We have
\[
    H\bigl(k((t))\bigr)=E\bigl(k((t))\bigr)H\bigl(k[[t]]\bigr).
\]
\end{lemma}
\begin{proof}
Let $\alpha\in H\bigl(k((t))\bigr)$. By~\cite[Thm.~3.4]{GilleTorseurs} applied to $\alpha^{-1}$ we can write $\alpha=\beta\gamma$, where $\beta\in H(k(t))$, $\gamma\in H\bigl(k[[t]]\bigr)$. By Propositions~8.4 and~8.5 of~\cite{PaninStavrovaVavilov} we can write  $\beta$ as $\beta'\beta''$, where $\beta'\in E(k(t))\subset E\bigl(k((t))\bigr)$, $\beta''\in H\subset H\bigl(k[[t]]\bigr)$.
\end{proof}
Applying this lemma to each point of $Z_u$, we can write $\Phi(F,\tau,\sigma)$ as $\beta\gamma$, where $\beta\in\bE(\dot D_{Z_u})$, $\gamma\in\bG_u(D_{Z_u})$. It follows from the proof of Proposition~\ref{Pr:ModuliTwistedGr} that under the isomorphism of this proposition the projection of $\Phi(F,\tau,\sigma)=\beta\gamma$ to $Gr_{\bG_u}$ corresponds to $(F,\tau)$. Then the image of $\beta$ under the right vertical map in the above diagram is also $(F,\tau)$.
\end{proof}

\subsection{Proof of Theorem~\ref{th:exot}}\label{ProofThExot}
Let $\omega$ be the generic point of $U$, let $\cF$ be a principal $G$-bundle over $U$ such that $\cF$ cannot be reduced to a proper parabolic subgroup of $G$ at $\omega$. By Proposition~\ref{pr:parred} the group scheme $\Aut(\cF)$ is anisotropic at the generic point. Since we are working over the algebraically closed field $k$, $\Aut(\cF)$ is isotropic at the closed point of $U$. Thus by Corollary~\ref{cor:example} there exists a non-trivial $\Aut(\cF)$-bundle $\cE$ over $\A^1_U$ such that $\cE$ is trivial on $\A^1_U-Z$ for a certain $Z$ finite and \'etale over $U$. This proves part~\eqref{th:exotB}.

To prove part~\eqref{th:exotA} consider the associated scheme (cf.~\eqref{eq:notation})
\[
    \cE':=\cF\times^{\Aut(\cF)}\cE=p_U^*\cF\times^{p_U^*\Aut(\cF)}\cE.
\]
This is the required $G$-bundle. Indeed, if it was isomorphic to $p_U^*\cF_0$, then, as it is easy to check
\[
    \cE\approx\Iso_G(p_U^*\cF,p_U^*\cF_0)\approx p_U^*(\Iso_G(\cF,\cF_0)),
\]
where $\Iso_G$ is the scheme of isomorphisms of $G$-bundles. But, as it was explained in the last paragraph of Section~\ref{sect:PrincipalOverLines}, $\cE$ is not isomorphic to the pullback of an $\Aut(\cF)$-bundle over $U$. Theorem~\ref{th:exot} is proved.

\subsection{Modifications and a proof of Theorem~\ref{Th:B}\eqref{thprB:LineB}}\label{sect:modifications}
In the situation of Theorem~\ref{Th:B} we call \emph{a modification of $\cE$ at $Y$} a $\bG$-bundle $\cF$ over $\P_U^1$ together with an isomorphism
\[
    \cF|_{\P^1_U-Y}\xrightarrow{\cong}\cE|_{\P^1_U-Y}.
\]
We can assume that $Y\cap Z=\emptyset$ (cf. Remark~2 after~\cite[Thm.~3]{FedorovPanin}). Let us choose a trivialization of $\cE$ on $\P^1_U-Z$.
\begin{proposition}
The functor, sending a $U$-scheme $T$ to the set of isomorphism classes of modifications of $\phi^*\cE$ at $Y\times_UT$, is isomorphic to the functor, sending $T$ to $Gr_\bG(Y\times_UT)$.
\end{proposition}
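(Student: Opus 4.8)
The plan is to follow the proof of Proposition~\ref{Pr:ModuliTwistedGr} almost verbatim, the one new ingredient being that $\phi^*\cE$ is \emph{canonically} trivial on the formal disc $D_{Y\times_UT}$. First I would establish this triviality. Since we have fixed a trivialization of $\cE$ on $\P^1_U-Z$ and $Y\cap Z=\emptyset$, it is enough to check that the canonical morphism $D_Y\to\P^1_U$ of diagram~\eqref{eq:fpqccover} factors through $\P^1_U-Z$; pulling back the fixed trivialization then gives a canonical trivialization $\sigma_0$ of $\phi^*\cE|_{D_{Y\times_UT}}$ (defined already over $T$, since the fixed trivialization lives over $U$). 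To verify the factorization I would pass to the finite \'etale surjective cover of Lemma~\ref{lm:decomp}, over which $Y$ becomes a disjoint union of sections $t=r_i$. The hypothesis $Y\cap Z=\emptyset$ forces the ideal of $Z$ to restrict to the unit ideal on the formal neighborhood of $t=r_i$ (an element congruent to $1$ modulo $t$ is invertible in $R[[t]]$), so $D_{Y_i}$ misses $Z$, exactly as in the proof of Lemma~\ref{lm:preimage}.

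Granting this, I would construct the natural transformation to $T\mapsto Gr_\bG(Y\times_UT)$ by repeating the recipe of Proposition~\ref{Pr:ModuliTwistedGr}, the sole change being that the reference trivialization away from $Y$ is now supplied by the modification isomorphism composed with $\sigma_0$. In detail, write $Y_T:=Y\times_UT$ and let $(\cF,\alpha)$ be a modification, so $\alpha\colon\cF|_{\P^1_T-Y_T}\xrightarrow{\cong}\phi^*\cE|_{\P^1_T-Y_T}$. After a surjective \'etale base change $T'\to T$ I would trivialize $\cF|_{D_{Y_{T'}}}$, using \'etale-local triviality of $\cF$ together with the smoothness lemma inside Proposition~\ref{Pr:ModuliTwistedGr} to lift a section from $Y_{T'}$ to $D_{Y_{T'}}$; call this trivialization $\sigma$. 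On $\dot D_{Y_{T'}}$ the composite $\sigma_0^{-1}\circ(\alpha|_{\dot D_{Y_{T'}}})\circ\sigma$ is an element of $\bG(\dot D_{Y_{T'}})$, and its image in $Gr_\bG(Y_{T'})$ is independent of $\sigma$: replacing $\sigma$ by $\sigma\mu$ with $\mu\in\bG(D_{Y_{T'}})$ multiplies the element on the right by $\mu$ and hence fixes the class. Descent along $T'\to T$ then yields a well-defined $Y$-point of $Gr_\bG$, just as in Proposition~\ref{Pr:ModuliTwistedGr}.

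For the inverse I would invoke the gluing Proposition~\ref{pr:gluing} with $\cE'=\phi^*\cE|_{\P^1_{T'}-Y_{T'}}$ and with $\hat\cE$ the trivial bundle on $D_{Y_{T'}}$, identified with $\phi^*\cE|_{D_{Y_{T'}}}$ through $\sigma_0$. This gives an equivalence
\[
    \cA(\cE',\hat\cE)\xrightarrow{\ \sim\ }\Iso\bigl(\hat\cE|_{\dot D_{Y_{T'}}},\cE'|_{\dot D_{Y_{T'}}}\bigr)\cong\bG(\dot D_{Y_{T'}}),
\]
the last identification again using $\sigma_0$. Thus a representative $g\in\bG(\dot D_{Y_{T'}})$ of a class in $Gr_\bG(Y_{T'})$ glues the trivial bundle on $D_{Y_{T'}}$ to $\phi^*\cE$ away from $Y_{T'}$ and returns a modification, with the $\bG(D_{Y_{T'}})$-ambiguity absorbed exactly as before and the construction descending to $T$. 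That the two constructions are mutually inverse and compatible with base change $T\to U$ is then formally identical to the corresponding verifications in Proposition~\ref{Pr:ModuliTwistedGr}, so I would simply cite them rather than reproduce them.

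The step I expect to be the crux is the canonical triviality of $\phi^*\cE$ on $D_{Y\times_UT}$: this is precisely where the disjointness $Y\cap Z=\emptyset$ and the chosen trivialization on $\P^1_U-Z$ enter, and it is what lets one treat modifications of $\phi^*\cE$ by the same local-at-$Y$ machinery as modifications of the trivial bundle. Everything else is a reference-twisted copy of Proposition~\ref{Pr:ModuliTwistedGr}, with $\phi^*\cE$ in place of the trivial bundle away from $Y$, and the descent and independence-of-choices bookkeeping is unchanged.
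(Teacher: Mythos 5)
Your proposal is correct and follows the same route as the paper, which itself only remarks that the proof is a slight generalization of Proposition~\ref{Pr:ModuliTwistedGr} using the fact that $\cE$ is trivialized in a Zariski neighborhood of $Y$ (since $Y\cap Z=\emptyset$). You have correctly identified that the crux is the resulting canonical trivialization of $\phi^*\cE$ on $D_{Y\times_UT}$, after which the construction, the independence of choices, and the descent arguments are verbatim those of Proposition~\ref{Pr:ModuliTwistedGr}.
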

The proof is a slight generalization of the proof of Proposition~\ref{Pr:ModuliTwistedGr} (note that $\cE$ is trivialized in a Zariski neighborhood of $Y$).

\begin{proof}[Sketch of proof of Theorem~\ref{Th:B}\eqref{thprB:LineB}] Similarly to the proof of Theorem~\ref{ThMaina} we can find a modification $(F,\tau)$ of $\cE_u$ at $Y_u$ such that $F$ is a trivial bundle over $\P_u^1$. Using the above proposition and Proposition~\ref{pr:GrSurj} we can find a modification $(\cF,\tilde\tau)$ of $\cE$  at $Y$ such that $\cF_u\approx F$. Applying Theorem~\ref{Th:B}\eqref{thprB:LineA} with $Z\cup Y$ instead of $Z$, we see that $\cF$ is trivial. Thus
\[
\cE|_{\P_U^1-Y}\approx\cF|_{\P_U^1-Y}
\]
is trivial.
\end{proof}

\bibliographystyle{alphanum}
\bibliography{exotic}

\end{document}